  \let\oldparagraph\paragraph
  \renewcommand{\paragraph}{
    \@ifstar
      \xxxParagraphStar
      \xxxParagraphNoStar
  }
  \newcommand{\xxxParagraphStar}[1]{\oldparagraph*{#1}\mbox{}}
  \newcommand{\xxxParagraphNoStar}[1]{\oldparagraph{#1}\mbox{}}
  \let\oldsubparagraph\subparagraph
  \renewcommand{\subparagraph}{
    \@ifstar
      \xxxSubParagraphStar
      \xxxSubParagraphNoStar
  }
  \newcommand{\xxxSubParagraphStar}[1]{\oldsubparagraph*{#1}\mbox{}}
  \newcommand{\xxxSubParagraphNoStar}[1]{\oldsubparagraph{#1}\mbox{}}
\patchcmd\longtable{\par}{\if@noskipsec\mbox{}\fi\par}{}{}
\def\maxwidth{\ifdim\Gin@nat@width>\linewidth\linewidth\else\Gin@nat@width\fi}
\def\maxheight{\ifdim\Gin@nat@height>\textheight\textheight\else\Gin@nat@height\fi}
\def\fps@figure{htbp}
  \renewcommand*\contentsname{Table of contents}
  \newcommand\contentsname{Table of contents}
  \renewcommand*\listfigurename{List of Figures}
  \newcommand\listfigurename{List of Figures}
  \renewcommand*\listtablename{List of Tables}
  \newcommand\listtablename{List of Tables}
  \renewcommand*\figurename{Figure}
  \newcommand\figurename{Figure}
  \renewcommand*\tablename{Table}
  \newcommand\tablename{Table}
\newcommand{\anon}{1}
\theoremstyle{plain}% default
\newtheorem{theorem}{Theorem}
\newtheorem{lemma}{Lemma}
\newtheorem{corollary}{Corollary}
\theoremstyle{definition}
\newtheorem{definition}{Definition}
\newtheorem{example}{Example}
\newtheorem{assumption}{Assumption}
\newtheorem{remark}{Remark}
\numberwithin{equation}{section}
\definecolor{Red}{RGB}{225,0,0}
\definecolor{Blue}{RGB}{0,0,255}
\definecolor{Cyan}{RGB}{0,180,255}
\definecolor{Green}{RGB}{0,160,0}
\definecolor{Alert}{RGB}{255,122,0}
\definecolor{NavyBlue}{RGB}{0,100,175}
\definecolor{NavyRed}{RGB}{125,0,0}
\newcommand{\eps}{\varepsilon}
\newcommand{\wt}{\widetilde}
\newcommand{\wh}{\widehat}
\newcommand{\argmin}{\mathop{\rm arg\min}}
\newcommand{\argmax}{\mathop{\rm arg\max}}
\newcommand{\Tr}{\operatorname{Tr}}
\newcommand{\rank}{\operatorname{rk}}
\newcommand{\Sp}{\operatorname{Sp}}
\newcommand{\alg}{\mA}
\newcommand{\diag}{\operatorname{diag}}
\newcommand{\dof}{\operatorname{dof}}
\newcommand{\fss}{\operatorname{FSS}}
\newcommand{\ls}{\operatorname{LS}}
\newcommand{\pcr}{\operatorname{PCR}}
\newcommand{\pls}{\operatorname{PLS}}
\newcommand{\spa}{\operatorname{span}}
\newcommand{\spr}{\operatorname{SPR}}
\newcommand{\B}{\mathbb{B}}
\newcommand{\E}{\mathbb{E}}
\renewcommand{\P}{\mathbb{P}}
\newcommand{\R}{\mathbb{R}}
\newcommand{\mA}{\mathcal{A}}
\newcommand{\mB}{\mathcal{B}}
\newcommand{\mC}{\mathcal{C}}
\newcommand{\mE}{\mathcal{E}}
\newcommand{\mK}{\mathcal{K}}
\newcommand{\mN}{\mathcal{N}}
\newcommand{\mR}{\mathcal{R}}
\newcommand{\mV}{\mathcal{V}}
\newcommand{\bzero}{\mathbf{0}}
\newcommand{\bA}{\mathbf{A}}
\newcommand{\bb}{\mathbf{b}}
\newcommand{\bB}{\mathbf{B}}
\newcommand{\be}{\mathbf{e}}
\newcommand{\bE}{\mathbf{E}}
\newcommand{\bG}{\mathbf{G}}
\newcommand{\bH}{\mathbf{H}}
\newcommand{\bI}{\mathbf{I}}
\newcommand{\bK}{\mathbf{K}}
\newcommand{\bP}{\mathbf{P}}
\newcommand{\bq}{\mathbf{q}}
\newcommand{\bQ}{\mathbf{Q}}
\newcommand{\bu}{\mathbf{u}}
\newcommand{\bU}{\mathbf{U}}
\newcommand{\bv}{\mathbf{v}}
\newcommand{\bV}{\mathbf{V}}
\newcommand{\bw}{\mathbf{w}}
\newcommand{\bx}{\mathbf{x}}
\newcommand{\bX}{\mathbf{X}}
\newcommand{\by}{\mathbf{y}}
\newcommand{\bZ}{\mathbf{Z}}
\newcommand{\balpha}{\bm{\alpha}}
\newcommand{\bbeta}{\bm{\beta}}
\newcommand{\bDelta}{\bm{\Delta}}
\newcommand{\bxi}{\bm{\xi}}
\newcommand{\bzeta}{\bm{\zeta}}
\newcommand{\bsigma}{\boldsymbol{\sigma}}
\newcommand{\bSigma}{\boldsymbol{\Sigma}}
\newcommand{\btheta}{\bm{\theta}}
\begin{document}

\def\spacingset#1{\renewcommand{\baselinestretch}%
{#1}\small\normalsize} \spacingset{1}

%%%%%%%%%%%%%%%%%%%%%%%%%%%%%%%%%%%%%%%%%%%%%%%%%%%%%%%%%%%%%%%%%%%%%%%%%%%%%%

\if1\anon
{
  \title{\bf On identification \\ in ill-posed linear regression}
  \author{Gianluca Finocchio\hspace{.2cm}\\
    Department of Statistics and Operations Research, Universit\"at Wien\\
    and \\
    Tatyana Krivobokova \\
    Department of Statistics and Operations Research, Universit\"at Wien}
  \maketitle
} \fi

\if0\anon
{
  \bigskip
  \bigskip
  \bigskip
  \begin{center}
    {\LARGE\bf Title}
\end{center}
  \medskip
} \fi

\bigskip
\begin{abstract}
\noindent A novel framework is introduced to formalize identifiability in well-specified but ill-posed linear regression models. The framework is distribution-free and accommodates highly correlated features that may or may not relate to the response, reflecting typical real-data structures. First, the identifiable parameter is defined as the least-squares solution obtained by regressing the response on the largest subset of relevant features whose condition number does not exceed a specified threshold, and the relative risk incurred by using this predictor instead of the optimal one is quantified. Second, simple, verifiable conditions are provided under which a broad class of linear dimensionality-reduction algorithms can estimate identifiable parameters; algorithms satisfying these conditions are termed statistically interpretable. Third, sharp high-probability error bounds are derived for these algorithms, with rates explicitly reflecting the degree of ill-posedness. With heavy-tailed features and sufficiently low effective rank, these algorithms achieve convergence rates that improve upon both the minimax least-squares rate and lower bounds for sparse estimation under sub-Gaussian features. Results are illustrated via simulations and a real-data application, in which effective rank grows logarithmically with dimension. The framework may extend to algorithms modeling nonlinear response-feature dependence.
\end{abstract}

\noindent%
{\it Keywords:} linear dimension reduction, identification, perturbation bounds, statistical interpretability.
\vfill

\newpage
\spacingset{1.8} % DON'T change the spacing!

%\tableofcontents

\section{Introduction}\label{sec:intro}
A linear regression model $y=\bx^\top\bbeta+\eps$ involving a response variable $y\in\R$ and a vector of features $\bx=(x_{1},\ldots,x_{p})^\top\in\R^p$, where the conditional expectation of the response is $\E(y|\bx)=\bx^\top\bbeta$ with some unknown vector of coefficients $\bbeta=(\beta_{1},\ldots,\beta_{p})^\top\in\R^p$, is the most classical and well-studied statistical setting. If this model is well-specified and $\bx$ has no highly correlated entries, so that $\bbeta$ is identifiable, then regression coefficients ${\beta}_i$ are interpretable in the sense that they describe the contribution of the corresponding $x_i$ to the response, while keeping other entries of $\bx$ fixed, that is, $\beta_i=\partial\mathbb{E}(y|\bx)/\partial x_i$.   \\
It turns out that in many modern applications, practitioners face settings in which $\bx$ contains both highly correlated features and features irrelevant to the response. A particularly important example of such an application is genome-wide association studies (GWAS), as discussed in \cite{Uffelmann2021gen}. Here, the features $\bx$ represent the entire genome, and the goal is to identify genotypes associated with the phenotypes of specific diseases. It is natural to expect that not all the genes in the pool are relevant to the specific response under investigation, while those that are relevant might be highly correlated. Another example is the data on protein dynamics reported in \citet{krivobokova2012}, which studied atomic configurations that drive a specific protein's biological function. One instance involved the gated water channel of {\it Pichia pastoris yeast}, in which the channel's opening diameter and the Euclidean coordinates of its atoms were analyzed. It is reasonable to assume that only atoms in the vicinity of the channel contribute to its opening and that, due to spatial proximity, the trajectories of these atoms are highly correlated. \\
In both examples, the primary objective is to gain insights into the underlying system and to identify (linear combinations of) features that are essential for explaining the response, despite the presence of both highly correlated and irrelevant features. To pinpoint the problem of parameter identification and interpretation in linear regression in the presence of highly correlated and irrelevant features, let us consider a toy example.

\begin{example}[Toy Model]\label{ex:toy}
Let us consider a nontrivial toy problem in which a linear model is well-specified but ill-posed. With $p=3$, we let the zero-mean features be $\bx=(x_{1},x_{2},x_{3})^\top$. For some $\beta_{1},\beta_{2}\in\R$ and $\bbeta=(\beta_{1},\beta_{2},0)^\top$, we let the response be $y=\bx^\top\bbeta + \epsilon$, with $\E(\epsilon|\bx)=0$ and $\E(\epsilon^2|\bx)<\infty$. For some $0\leq \rho\leq 1$, we assume that
\begin{align*} 
	\bSigma(\rho) = \E(\bx\bx^\top) = \left(
	\begin{matrix} 
		1 & \rho & 0 \\
		\rho & 1 & 0 \\
		0 & 0 & 2
	\end{matrix}\right), \quad
	\bsigma(\rho) = \E(\bx y) =\left(
	\begin{matrix} 
		\beta_{1} + \rho\beta_{2} \\ 
		\rho\beta_{1} + \beta_{2} \\
		0
	\end{matrix}\right),
\end{align*}
so that the feature $x_{3}$ is uncorrelated with the response and the other features, whereas the features $x_{1}$ and $x_{2}$ are correlated with the response and among themselves. Moreover, the covariance of $x_3$ dominates that of $x_1$ and $x_2$. For any $\rho\in[0,1]$, the unique minimum-$L^2$-norm solution of the least-squares problem is given by $\bbeta_{\ls}(\rho) = \bSigma^\dagger(\rho) \bsigma(\rho)$, where $\bSigma^\dagger(\rho)$ is a Moore-Penrose inverse of $\bSigma(\rho)$, or just an inverse, if the matrix is invertible.\\ 
Let us consider the case $\rho=1$. Obviously, with $x_1$ and $x_2$ perfectly correlated, the coefficients $\bbeta$ lose their intended meaning, since $\bx^\top\bbeta$ can be generated by infinitely many coefficient vectors. That is, $\bbeta$ is not identifiable and hence is no longer interpretable. The unique minimum $L^2$-norm least-squares solution is given by $\bbeta_{\ls}(1)=\{(\beta_1+\beta_2)/2,(\beta_1+\beta_2)/2,0\}^\top$. Hence, each of perfectly correlated $x_1$ and $x_2$ gets the same weight $(\beta_1+\beta_2)/2$, which is meaningful. %, which reflects the shared variation of $x_1$ and $x_2$. 
At the same time, if $\rho=1$, then $\bx$ can be projected onto a one-dimensional space. If we choose the projection along the second eigenvector of $\bSigma(1)$ given by $\bold{u}_2=(1/\sqrt{2},1/\sqrt{2},0)^\top$, we get that $\bx^\top\bbeta=\bx^\top\bold{u}_2\bold{u}_2^\top\bbeta=\bx^\top \bbeta_{\ls}(1)$, where $\bold{u}_2\bold{u}_2^\top\bbeta=\bbeta_{\ls}(1)$, a coefficient vector constrained to a specific one-dimension space, is perfectly identifiable. \\
Note that choosing the first eigenvector of $\bSigma(1)$, given by $\bold{u}_1=(0,0,1)^\top$, would result in $\bold{u}_1\bold{u}_1^\top\bbeta=(0,0,0)^\top$, which is meaningless. Hence, the projection should be carefully chosen and not along directions irrelevant to the response. Of course, in practice, it is unknown which features are irrelevant, and it is rather of interest to identify them. \\
What can be said about the identifiability of $\bbeta$ if $\bSigma(\rho)$ becomes ill-posed, that is, if $\rho\rightarrow1$? Formally, $\bbeta_{\ls}(\rho)=(\beta_1,\beta_2,0)^\top$ for any $\rho\in[0,1)$ and is identifiable. However, if $\rho$ is very close to $1$, reflecting an extremely high correlation between $x_1$ and $x_2$, it seems reasonable to use identifiable and hence interpretable $\bbeta_{\ls}(1)=\bold{u}_2\bold{u}_2^\top\bbeta=\{(\beta_1+\beta_2)/2,(\beta_1+\beta_2)/2,0\}^\top$ instead of $\bbeta_{\ls}(\rho)$, if the corresponding error is negligible. At the first glance, the error made by replacing $\bbeta_{\ls}(\rho)$ by $\bbeta_{\ls}(1)$ is large, since $\bbeta_{\ls}(\rho)\nrightarrow\bbeta_{\ls}(1)$ as $\rho\rightarrow1$.
However, $\E\{\bx^\top\bbeta_{\ls}(\rho)-\bx^\top\bbeta_{\ls}(1)\}^2=(1-\rho)(\beta_1-\beta_2)^2/2\rightarrow0$ as $\rho\rightarrow1$. That is, replacing  $\bbeta_{\ls}(\rho)$ by $\bbeta_{\ls}(1)$ for $\rho\rightarrow 1$, we get identifiable parameters that give the same weight to highly correlated features $x_1$ and $x_2$, thereby making a negligible error in approximating $\bx^\top\bbeta$.
\end{example}
Altogether, obtaining identifiable parameters (up to an error) in an ill-posed linear regression reduces to finding an appropriate projection onto a lower-dimensional subspace of $\bx$, such that the resulting error in approximating $\bx^\top\bbeta$ is acceptably small. Importantly, the direction of the projection should not be along the features irrelevant for the response, which are, in general, unknown in practice.\\
Surprisingly, the problem of identifiability and hence interpretability in ill-posed linear regression has, to the best of our knowledge, not been systematically addressed within a sufficiently general framework. Since in practice regression coefficients $\bbeta$ are estimated using various dimensionality reduction algorithms, the existing literature merely provides a set of conditions under which a given algorithm produces a consistent estimator. We briefly overview a few of the most well-known algorithms here.\\
The first class of algorithms projects the features along directions chosen independently of the response, thereby achieving unsupervised reduction. This includes Principal Component Regression (PCR) devised by \cite{hotelling1933anal} and all similar projection algorithms discussed by \cite{bing2021pre}, which only use the features' covariance matrix. Under the regularity conditions summarized by \cite{fan2023}, these methods are shown to achieve both consistent model selection \citep[see][]{stock2002} and estimation \citep[see][]{bing2021pre}. Despite the ability of these algorithms to identify well-posed clusters of features, there is no logical reason for the principal components to contain any information about the response; see Section~3 in \cite{cox1968}.  \\
The second class of algorithms involves sparse projections of the features. This includes all algorithms reviewed by \cite{FreijeiroGonzlez2021cri}, such as Best Subset Selection by \cite{beale1967}, the LASSO by \cite{tibshirani1996reg} and all its variations, such as the Elastic Net by \cite{Zou2005reg}, the Adaptive LASSO by \cite{Zou2006ada} and many more. The LASSO estimator achieves consistent model selection under the regularity conditions discussed, e.g., by \cite{vandegeer2009cond}, as well as oracle estimation rates, as shown by \cite{Bellec2018slo}. However, it has become apparent that these conditions are not satisfied in ill-posed genomic datasets, as demonstrated by \cite{Wang2018}. \\
The third class of algorithms projects onto subspaces that preserve the conditional distributions of the response given the features, thereby yielding sufficient reductions in the sense of \cite{Adragni2009}. This includes Sliced Inverse Regression devised by \cite{Li1991sli} and Partial Least Squares (PLS) proposed by \cite{Wold66non}, among others. In well-specified models, the performance of Partial Least Squares as a dimensionality reduction tool has been investigated by \cite{cook2013env} and \cite{Cook2021pls}. Under the same assumptions, its finite-sample estimation rates have been obtained by \cite{singer2016partial}, whereas its asymptotic prediction risk has been studied by \cite{Cook2019par}. The empirical evidence \citep[see][]{krivobokova2012} suggests that this class of algorithms outperforms competitors from the previous two classes in ill-posed problems.  \\
%, see Section \ref{sec:appl} for more details and corresponding literature review. \\
It is possible to mix and match principles from the previous classes to create hybrid algorithms that still fall within our framework. These methods combine multiple regularization methods and are designed to outperform competitors in well-specified models. A few variants of Principal Components incorporating sparse representations for clustering are AdaCLV by \cite{Marion2020ada} and VC PCR by \cite{Marion2024vc}. A few variants of Partial Least Squares incorporating different penalizations are Sparse PLS by \cite{Chun2010} and Regularized PLS by \cite{Allen2012reg}, just to name a few. Our findings suggest that additional layers of regularization are often redundant or counterproductive. \\
In this work, we offer a different perspective: under minimal assumptions about the data-generating process that reflect typical properties of real-world datasets, we formulate conditions that an algorithm should satisfy to produce an identifiable parameter estimator, thereby quantifying the resulting error.  In particular, we exploit the fact that ill-posedness implies that the response depends, up to an error, on a lower-dimensional projection of the relevant features. First, we define a parameter as identifiable if the population prediction error incurred by replacing the true parameter with its low-dimensional projection is negligible. 
Next, we develop a rigorous framework to characterize when a dimensionality reduction algorithm can produce an interpretable solution. Within our framework, an algorithm is considered interpretable if it delivers an identifiable solution with sufficiently small error. To this end, we introduce a set of conditions that an algorithm must satisfy to be interpretable and discuss them for several classical algorithms. Finally, theoretical bounds on the population and sample errors of algorithms for ill-posed linear regression are derived. It is shown that only statistically interpretable algorithms can achieve negligible population error, whereas any stable algorithm can deliver consistent estimators for the population counterparts. Thereby, the sample convergence rates are driven by the effective dimension of the feature covariance matrix, which is closely related to the problem's degree of ill-posedness.

\section{Main Results} \label{sec:lm} 

We begin by stating our main assumptions, establishing the necessary notation, and giving the main definitions.

\begin{assumption}[Linear, 2 moments]\label{ass:x.y.lm.2nd}
    The features $\bx\in\R^p$ are a random vector and the response $y\in\R$ is a random variable, they are both centered and have finite second moments $\bSigma=\E(\bx\bx^\top)\in\R_{\succeq0}^{p\times p}$, $\bsigma=\E(\bx y)\in\R^p\setminus\{\bzero_{p}\}$ and $\E(y^2)>0$. The features are possibly degenerate with $1\leq r_{\bx} = \rank(\bSigma) \leq p$. For some vector $\bbeta\in\R^p$, the conditional expectation of the response given the features is linear with $\E(y|\bx)=\bx^\top\bbeta$.
\end{assumption}
Under Assumption \ref{ass:x.y.lm.2nd}, the unique minimum-$L^2$-norm solution of the population least-squares problem is given by
\begin{align*}
	\bbeta_{\ls}(\bSigma,\bsigma) = \argmin_{\bbeta\in\R^p} \E(\bx^\top\bbeta - y)^2 = \bSigma^\dagger\bsigma\in\R^p,
\end{align*}
see Lemma \ref{lem:x.y.ls}, where $\bSigma^\dagger$ denotes a unique Moore-Penrose inverse of  $\bSigma$ (or just an inverse, if the matrix is invertible). In particular, it also holds that $\E(y|\bx)=\bx^\top\bbeta_{\ls}(\bSigma,\bsigma)$ and without loss of generality $\bbeta_{\ls}(\bSigma,\bsigma)=\bbeta$.

\subsection{On Identifiable Parameters}\label{sec:iden}
Without additional assumptions, our first goal is to characterize the largest span of linear combinations of features that is irrelevant for the response. As verified in Lemma~\ref{lem:x.y.cov}, the features $\bx$ belong to $\mR(\bSigma)$, the range of $\bSigma$, almost surely. 
Let ${\mB},{\mB}^\bot\subseteq\mR(\bSigma)$ be complementary linear subspaces that partition the range  $\mR(\bSigma)={\mB}\oplus{\mB}^\bot$. Denoting $\bU_{\mB}\in\R^{p\times p}$ the orthogonal projection of $\R^p$ onto ${\mB}$, we define the projected features $\bx_{\mB}=\bU_{\mB}\bx$ and $\bx_{\mB^\bot}=\bU_{\mB^\bot}\bx$. With this, we define the relevant and irrelevant subspaces for the response.
\begin{definition}\label{def:rel.sub}
The subspace 
\begin{align*}
	\mB_{y}^\bot = \argmax \Big\{\dim({\mB}^\bot) : \mR(\bSigma)={\mB}\oplus{\mB}^\bot,\ \E(\bx_{\mB^\bot} y) = \bzero_{p},\ \E(\bx_{\mB^\bot} \bx_{\mB}^\top) = \bzero_{p\times p}\Big\}
\end{align*}
is called {\it irrelevant subspace} for $y$, while its orthogonal complement is called {\it relevant subspace} and is denoted by $\mB_{y}$.
Moreover, $\bx_{y}=\bU_{\mB_y}\bx$ are referred to as {\it relevant features}, while $\bx_{y^\bot}=\bU_{\mB_{y}^\bot}\bx$ are {\it irrelevant features}.
\end{definition}
That is, $ \mB_{y}^\bot$ is the largest subspace of $\mR(\bSigma)$ for which the projected features on it are uncorrelated with both the response $y$ and the projection of the features along its complement. 
 This induces the orthogonal decompositions
$
    \mR(\bSigma) = \mB_{y} \oplus \mB_{y}^\bot$ and $\bx = \bx_{y} \oplus \bx_{y^\bot} \in \mR(\bSigma).
$\\
Note that the definition of $\mB_{y}^\bot $ does not require the linearity assumption $\E(y|\bx)=\bx^\top\bbeta$. Also, the relevant subspace $\mB_{y}$ is allowed to be the whole range $\mR(\bSigma)$ if the irrelevant subspace has dimension zero. 
We prove the following in Appendix~\ref{app:proof:lm}.

\begin{lemma}\label{lem:ls.rel.pop}
    Let $(\bx,y)\in\R^p\times\R$ satisfy Assumption~\ref{ass:x.y.lm.2nd}. Then, the relevant subspace $\mB_{y}$ is unique and $\bbeta_{\ls}(\bSigma,\bsigma) = \bbeta_{\ls}(\bSigma_y,\bsigma_y)$, where $\bSigma_{y}=\E(\bx_y\bx_y^\top)$ and $\bsigma_{y}=\E(\bx_y y)$.\end{lemma}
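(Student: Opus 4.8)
The plan is to recast Definition~\ref{def:rel.sub} in terms of the spectral decomposition of $\bSigma$ and then read off both assertions. As a preliminary, note that $\E(y\mid\bx)=\bx^\top\bbeta$ gives $\bsigma=\bSigma\bbeta\in\mR(\bSigma)$ (alternatively, $\bx\in\mR(\bSigma)$ a.s.\ by Lemma~\ref{lem:x.y.cov}, so the same conclusion holds), and for any orthogonal splitting $\mR(\bSigma)=\mB\oplus\mB^\bot$ one has $\bU_\mB+\bU_{\mB^\bot}=\bU_{\mR(\bSigma)}$, $\bx=\bx_\mB+\bx_{\mB^\bot}$ a.s., $\E(\bx_{\mB^\bot}y)=\bU_{\mB^\bot}\bsigma$ and $\E(\bx_{\mB^\bot}\bx_\mB^\top)=\bU_{\mB^\bot}\bSigma\bU_\mB$. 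Thus the two constraints in Definition~\ref{def:rel.sub} read $\bU_{\mB^\bot}\bsigma=\bzero_p$ and $\bU_{\mB^\bot}\bSigma\bU_\mB=\bzero_{p\times p}$. Substituting $\bU_\mB=\bU_{\mR(\bSigma)}-\bU_{\mB^\bot}$, using $\bSigma\bU_{\mR(\bSigma)}=\bSigma$, and exploiting the symmetry of $\bU_{\mB^\bot}\bSigma\bU_{\mB^\bot}$, the second constraint is seen to be equivalent to $\bU_{\mB^\bot}\bSigma=\bSigma\bU_{\mB^\bot}$, i.e.\ to $\mB^\bot$ (equivalently $\mB$) being $\bSigma$-invariant.

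Next I would invoke the spectral decomposition $\bSigma=\sum_j\lambda_j\bP_j$ on $\mR(\bSigma)$, with $\lambda_j>0$ the distinct eigenvalues, $\bP_j$ the orthogonal projections onto the eigenspaces and $\sum_j\bP_j=\bU_{\mR(\bSigma)}$. Then $\bU_{\mB^\bot}$ commutes with $\bSigma$ if and only if $\mB^\bot$ decomposes as $\bigoplus_j\mB^\bot_j$ with $\mB^\bot_j\subseteq\mR(\bP_j)$; and, since $\bsigma=\sum_j\bP_j\bsigma$, for such $\mB^\bot$ the first constraint $\bU_{\mB^\bot}\bsigma=\bzero_p$ is equivalent to $\mB^\bot_j\perp\bP_j\bsigma$ for every $j$, hence to $\mB^\bot_j\subseteq\mR(\bP_j)\cap(\bP_j\bsigma)^\perp$. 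Because $\dim\mB^\bot=\sum_j\dim\mB^\bot_j$, the maximum in Definition~\ref{def:rel.sub} is attained if and only if $\mB^\bot_j=\mR(\bP_j)\cap(\bP_j\bsigma)^\perp$ for all $j$; this maximiser is unique, which gives uniqueness of $\mB_y^\bot$ and therefore of $\mB_y=\mR(\bSigma)\ominus\mB_y^\bot=\bigoplus_j\Sp\{\bP_j\bsigma\}$. In particular $\bU_{\mB_y}$ commutes with $\bSigma$ (and with $\bSigma^\dagger$), and $\bsigma\in\mB_y$, so $\bU_{\mB_y}\bsigma=\bsigma$.

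For the identity of least-squares solutions I would compute with the commuting projection $\bU_{\mB_y}$. Since $\bU_{\mB_y}$ commutes with $\bSigma$, the spectral form gives $\bSigma_y=\bU_{\mB_y}\bSigma\bU_{\mB_y}=\bU_{\mB_y}\bSigma$ with Moore--Penrose inverse $\bSigma_y^\dagger=\bU_{\mB_y}\bSigma^\dagger$ (checked directly from the four pseudoinverse axioms, using $\bSigma\bSigma^\dagger=\bU_{\mR(\bSigma)}$), while $\bsigma_y=\E(\bx_y y)=\bU_{\mB_y}\bsigma$. Hence $\bbeta_{\ls}(\bSigma_y,\bsigma_y)=\bSigma_y^\dagger\bsigma_y=\bU_{\mB_y}\bSigma^\dagger\bU_{\mB_y}\bsigma=\bU_{\mB_y}\bSigma^\dagger\bsigma$. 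It then suffices to show $\bSigma^\dagger\bsigma\in\mB_y$: for $\bv\in\mB_y^\bot$ one has $\bSigma^\dagger\bv\in\mB_y^\bot$ by $\bSigma^\dagger$-invariance, so $\langle\bSigma^\dagger\bsigma,\bv\rangle=\langle\bsigma,\bSigma^\dagger\bv\rangle=0$ since $\bsigma\perp\mB_y^\bot$; as $\bSigma^\dagger\bsigma\in\mR(\bSigma)=\mB_y\oplus\mB_y^\bot$, this forces $\bSigma^\dagger\bsigma\in\mB_y$ and hence $\bU_{\mB_y}\bSigma^\dagger\bsigma=\bSigma^\dagger\bsigma$. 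Combining, $\bbeta_{\ls}(\bSigma_y,\bsigma_y)=\bSigma^\dagger\bsigma=\bbeta_{\ls}(\bSigma,\bsigma)$.

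The step I expect to be the main obstacle is the uniqueness bookkeeping in the $\argmax$: one must argue that \emph{every} feasible subspace of maximal dimension is forced, eigenspace by eigenspace, to equal $\mR(\bP_j)\cap(\bP_j\bsigma)^\perp$ — not merely that one such maximiser exists — and, in the preliminary reduction, that orthogonality of $\mB$ and $\mB^\bot$ inside $\mR(\bSigma)$ is precisely what makes $\bU_{\mB^\bot}\bSigma\bU_\mB=\bzero_{p\times p}$ equivalent to commutation with $\bSigma$. The remainder is routine linear algebra with Moore--Penrose inverses of commuting operators.
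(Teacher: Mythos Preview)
Your proof is correct. The two approaches diverge mainly in the uniqueness argument: the paper observes that the conditions in Definition~\ref{def:rel.sub} are equivalent to $\bSigma=\bU_y\bSigma\bU_y+\bU_{y^\bot}\bSigma\bU_{y^\bot}$ and $\bsigma=\bU_y\bsigma$, then identifies $\mB_y$ as the $\bSigma$-envelope of $\spa\{\bsigma\}$ in the sense of \cite{Cook2010env} and invokes the known uniqueness of envelopes (as the intersection of all reducing subspaces for $\bSigma$ containing $\bsigma$). You instead give a self-contained spectral argument, decomposing any feasible $\mB^\bot$ eigenspace by eigenspace and maximising each piece, which yields the explicit description $\mB_y=\bigoplus_j\spa\{\bP_j\bsigma\}$ as a by-product. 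Your route is more elementary (no external reference) and more informative about the structure of $\mB_y$; the paper's route is shorter if one is willing to cite envelope theory. For the identity $\bbeta_{\ls}(\bSigma,\bsigma)=\bbeta_{\ls}(\bSigma_y,\bsigma_y)$ the two proofs are essentially the same computation in different notation: the paper writes $\bSigma=\bSigma_y\oplus\bSigma_{\bx_{y^\bot}}$ and $\bSigma^\dagger=\bSigma_y^\dagger\oplus\bSigma_{\bx_{y^\bot}}^\dagger$ directly, while you express the same block structure through commutation of $\bU_{\mB_y}$ with $\bSigma$ and $\bSigma^\dagger$ and verify the Moore--Penrose axioms.
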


That is, the vector $\bbeta$ depends only on the moments $\bSigma_{y}$ and $\bsigma_{y}$ of the relevant pair $(\bx_{y},y)$, and $\E(y|\bx) = \bx^\top\bbeta = \bx_{y}^\top\bbeta$ is the best linear predictor for the response in the least-squares sense. \\
Although the relevant subspace is sufficient in solving the population least-squares problem, the covariance matrix $\bSigma_{y}$ of the relevant features might still be ill-posed with a very large condition number $\kappa_{2}(\bSigma_{y})=\|\bSigma_{y}\|_{op}\|\bSigma_{y}^\dagger\|_{op}$. From a practical point of view, this is an issue for consistent estimation via sample least-squares, since the corresponding convergence rate is proportional to such a condition number. From a theoretical point of view, this is an issue because it is hard to identify entries of $\bbeta$ that correspond to highly correlated relevant features. To address this problem, it is reasonable to replace the ill-posed $\bx_{y}$ with its closest, in $L_2$ sense, well-posed low-dimensional projection. It is well known that this approximation is obtained by orthogonal projection onto the subspace spanned by the leading eigenvectors of $\bSigma_{y}$. \\
In what follows, we denote $\mB_{s}\subseteq\mB_{y}$ the span of the first $s$ eigenspaces of $\bSigma_{y}$, for any $1\leq s\leq d_{y}$ where $d_{y}$ is the number of its unique non-zero eigenvalues. Moreover, $\mB_{s}^\bot$ denotes the orthogonal complement of $\mB_{s}$ in $\mB_{y}$. Projecting $\bx_{y}$ onto $\mB_{s}$, we get $\bx_{s}=\bU_{\mB_{s}}\bx_{y}$ so that $\bbeta_{s} = \bbeta_{LS}(\bSigma_s,\bsigma_s)=\bSigma_{s}^\dagger\bsigma_{s}$ is the minimum-$L^2$-norm solution of the population least-squares problem on $(\bx_{s},y)$ with $\bSigma_s=\E(\bx_s\bx_s^\top)$ and $\bsigma_s=\E(\bx_sy)$. It is assumed that $\bsigma_{s}\neq\bzero_{p}$, so that $\bbeta_s$ is nontrivial. \\
Following the idea presented in Example \ref{ex:toy}, we would like to choose $\mB_s$ such that the error of replacing the best linear predictor $\bx^\top\bbeta=\bx_{y}^\top\bbeta$ with the projected predictor $\bx^\top\bbeta_{s}=\bx_{y}^\top\bbeta_{s}$ is acceptably small. The following lemma, which we prove in Appendix~\ref{app:proof:lm}, provides an upper bound on the corresponding risk. 

\begin{lemma}\label{lem:phi.B}
	Let $(\bx,y)\in\R^p\times\R$ satisfy Assumption~\ref{ass:x.y.lm.2nd} and $\mB_{s}\subseteq\mB_{y}$ be any span of first $s$ eigenspaces of $\bSigma_{y}$ for $1\leq s\leq d_{y}$. The relative risk of the features $\bx_{s}$ for $y$ is
	\begin{align*}
		\eps_{s} = \frac{\E(\bx_{y}^\top\bbeta - \bx_{y}^\top\bbeta_{s})^2}{\|\bSigma_{y}\|_{op}\  \|\bbeta\|_{2}^2} \leq \frac{1}{\kappa_{2}(\bSigma_{s+1})} \cdot \frac{\|\bbeta_{s^\bot}\|_{2}^2}{\|\bbeta\|_{2}^2},
	\end{align*}
	with the convention that $\kappa_{2}(\bSigma_{d_{y}+1}) = +\infty$.
\end{lemma}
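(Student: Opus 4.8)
The plan is to reduce the statement to a single structural identity: the least-squares parameter built from the truncated features equals the orthogonal projection of $\bbeta$ onto $\mB_s$, i.e.\ $\bbeta_s = \bU_{\mB_s}\bbeta$, and likewise $\bbeta_{s^\bot} = \bU_{\mB_s^\bot}\bbeta$, where $\bbeta_{s^\bot} = \bSigma_{s^\bot}^\dagger\bsigma_{s^\bot}$ is the least-squares parameter of $\bx_{s^\bot} = \bU_{\mB_s^\bot}\bx_y$. Granting this, the numerator of $\eps_s$, the normalizer $\|\bSigma_y\|_{op}$, and the condition number $\kappa_2(\bSigma_{s+1})$ all become explicit functions of the distinct nonzero eigenvalues $\lambda_1 > \cdots > \lambda_{d_y} > 0$ of $\bSigma_y$, and the bound follows from a one-line Rayleigh-quotient estimate.

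First I would record the linear algebra. Block-diagonality of $\bSigma$ along $\mB_y \oplus \mB_y^\bot$ (from $\E(\bx_{y^\bot}\bx_{y}^\top) = \bzero_{p\times p}$ in Definition~\ref{def:rel.sub}) together with positive-definiteness of $\bSigma$ on its range forces $\bSigma_y$ to be positive definite on $\mB_y$, so $\mR(\bSigma_y) = \mB_y = \mB_s \oplus \mB_s^\bot$ and the eigenspaces of $\bSigma_y$ partition $\mB_y$ with no kernel contribution. By Lemma~\ref{lem:ls.rel.pop} and the normalization $\bbeta = \bbeta_{\ls}(\bSigma,\bsigma)$ we have $\bbeta = \bSigma_y^\dagger\bsigma_y \in \mB_y$. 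Since $\mB_s$ and $\mB_s^\bot$ are unions of eigenspaces of $\bSigma_y$, the projections $\bU_{\mB_s}$ and $\bU_{\mB_s^\bot}$ commute with $\bSigma_y$ and $\bSigma_y^\dagger$; combined with $\bx_s = \bU_{\mB_s}\bx_y$ this gives $\bSigma_s = \bU_{\mB_s}\bSigma_y$, $\bsigma_s = \bU_{\mB_s}\bsigma_y$, $\bSigma_s^\dagger = \bU_{\mB_s}\bSigma_y^\dagger$, and therefore
\begin{align*}
\bbeta_s = \bSigma_s^\dagger\bsigma_s = \bU_{\mB_s}\bSigma_y^\dagger\bU_{\mB_s}\bsigma_y = \bU_{\mB_s}\bSigma_y^\dagger\bsigma_y = \bU_{\mB_s}\bbeta ;
\end{align*}
the same computation yields $\bbeta_{s^\bot} = \bU_{\mB_s^\bot}\bbeta$, and since $\bbeta \in \mB_y$ these pieces are orthogonal with $\bbeta - \bbeta_s = \bbeta_{s^\bot}$.

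Next I would evaluate the three ingredients. The numerator equals
\begin{align*}
\E(\bx_y^\top\bbeta - \bx_y^\top\bbeta_s)^2 = \E\!\left(\bx_y^\top\bbeta_{s^\bot}\right)^2 = \bbeta_{s^\bot}^\top\bSigma_y\bbeta_{s^\bot} ,
\end{align*}
and, since $\bbeta_{s^\bot}$ lies in the span of the eigenspaces of $\bSigma_y$ with eigenvalues $\lambda_{s+1} > \cdots > \lambda_{d_y}$ (the empty span $\{\bzero_p\}$ when $s = d_y$), the Rayleigh bound gives $\bbeta_{s^\bot}^\top\bSigma_y\bbeta_{s^\bot} \le \lambda_{s+1}\|\bbeta_{s^\bot}\|_2^2$. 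The normalizer is $\|\bSigma_y\|_{op} = \lambda_1$. Finally $\bSigma_{s+1}$ has range $\mB_{s+1}$, so its largest eigenvalue is $\lambda_1$ and its smallest nonzero one is $\lambda_{s+1}$, hence $\kappa_2(\bSigma_{s+1}) = \lambda_1/\lambda_{s+1}$. Combining,
\begin{align*}
\eps_s = \frac{\bbeta_{s^\bot}^\top\bSigma_y\bbeta_{s^\bot}}{\|\bSigma_y\|_{op}\,\|\bbeta\|_2^2} \le \frac{\lambda_{s+1}\,\|\bbeta_{s^\bot}\|_2^2}{\lambda_1\,\|\bbeta\|_2^2} = \frac{1}{\kappa_2(\bSigma_{s+1})}\cdot\frac{\|\bbeta_{s^\bot}\|_2^2}{\|\bbeta\|_2^2} ,
\end{align*}
and for $s = d_y$ both sides are $0$, matching the convention $\kappa_2(\bSigma_{d_y+1}) = +\infty$.

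I expect the only delicate point to be bookkeeping rather than analysis: one must verify carefully that $\bU_{\mB_s}$ really is the orthogonal projection onto a union of eigenspaces of $\bSigma_y$ — which is what legitimizes the commutation relations and the identity $\bSigma_s^\dagger = \bU_{\mB_s}\bSigma_y^\dagger$ — and that $\bbeta$ lands inside $\mB_y$ after the normalization, so that the decomposition $\bbeta = \bbeta_s + \bbeta_{s^\bot}$ is orthogonal. Everything downstream is the short eigenvalue estimate above.
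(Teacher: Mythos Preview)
Your proposal is correct and follows essentially the same route as the paper's own proof: establish the orthogonal decomposition $\bbeta-\bbeta_{s}=\bbeta_{s^\bot}$, rewrite the numerator as the quadratic form $\bbeta_{s^\bot}^\top\bSigma_{y}\bbeta_{s^\bot}$, apply the Rayleigh bound $\leq\lambda_{s+1}\|\bbeta_{s^\bot}\|_{2}^{2}$, and identify $\kappa_{2}(\bSigma_{s+1})=\lambda_{1}/\lambda_{s+1}$. The paper's proof simply asserts $\bbeta_{\mB_{d_{y}}}-\bbeta_{s}=\bbeta_{s^\bot}$ without further comment, whereas you spell out the commutation argument behind $\bbeta_{s}=\bU_{\mB_{s}}\bbeta$; this extra care is welcome but does not change the logic.
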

Since the upper bound on $\eps_s$ depends on the condition number $\kappa_{2}(\bSigma_{s+1})$, larger $s$ lead to a larger condition number and to a smaller risk. Bearing this in mind, we give the following definition of an identifiable parameter in an ill-posed linear regression. 
\begin{definition}\label{def:x.B.y.param}
	Let $\mB_{s}\subseteq\mB_{y}$ be any span of first $s$ eigenspaces of $\bSigma_{y}$ and $\tau>1$. We say that $\bbeta_{s}$ is $\tau$-\emph{identifiable} if $\kappa_{2}(\bSigma_{s}^{1/2}) < \tau$ and $\kappa_{2}(\bSigma_{s+1}^{1/2}) \geq  \tau$. The corresponding linear subspace $\mB_{s}$ is also referred to as $\tau$-\emph{identifiable} and its dimension $\dim(\mB_s)$ gives the \emph{degrees-of-freedom} of the problem. 
\end{definition}
Note that from Lemma \ref{lem:phi.B} follows that if $\bbeta_{s}$ is $\tau$-identifiable, then the relative risk of replacing $\bx_y^\top\bbeta$ by $\bx_y^\top\bbeta_s$ is bounded by $\eps_{s}<\tau^{-2}$. 
Therefore, it seems reasonable to take $s$ such that $\tau$ is at most close to $10$, since otherwise the problem is considered ill-posed in such classical references as \cite{Belsley1980} or \cite{Salmeron2019}. 

\begin{example}[Toy Model, cont.] \label{ex:toy:cont}
	In the toy model from Example~\ref{ex:toy} the relevant features are $\bx_{y}=(x_{1},x_{2},0)^\top$ and the moments of the relevant pair $(\bx_{y},y)$ become $\bSigma_{y}(\rho) = \E(\bx_{y}\bx_{y}^\top)$ and $\bsigma_{y}(\rho) = \E(\bx_{y}y)$. The condition number of the relevant covariance matrix is $\kappa_{2}\{\bSigma_{y}(\rho)\}=(1+\rho)/(1-\rho)$ and goes to infinity when $\rho\to1$. 
	The relevant subspace $\mB_{y}=\R^2\times\{0\}$ is spanned by the eigenvectors $\bv_1=(1/\sqrt{2},1/\sqrt{2},0)^\top$ and $\bold{v}_2=(1/\sqrt{2},-1/\sqrt{2},0)^\top$ of $\bSigma_{y}$. The span of eigenspaces of the relevant covariance matrix are $\mB_{1}=\spa\{\bv_1\}$ and $\mB_{2}=\spa\{\bv_1,\bv_2\}$. 
	The condition numbers and relative risks are
	\begin{align*} 
		\kappa_{2}(\bSigma_{1}^{1/2}(\rho)) &= 1,\quad \eps_{1}(\rho) \leq \frac{1-\rho}{1+\rho} \cdot \frac{(\beta_{1}-\beta_{2})^2}{2(\beta_{1}^2+\beta_{2}^2)}, \quad
		\kappa_{2}(\bSigma_{2}^{1/2}(\rho)) = \sqrt{\frac{1+\rho}{1-\rho}},\quad \eps_{2}(\rho) = 0.
	\end{align*}
	In view of Definition~\ref{def:x.B.y.param}, in the ill-posed setting with $\rho=0.98$, the subspace $\mB_{2}$ yields condition number $ \kappa_{2}\{\bSigma_{2}^{1/2}(\rho)\}=\sqrt{99}\simeq9.95$. Hence, the $\sqrt{99}$-identifiable parameter is $\bbeta_{1}$ with the corresponding relative risk $\eps_{1}(\rho) \leq 0.01$. Obviously, $\bbeta_{1}$ coincides with the interpretable solution $\bbeta_{\ls}(1)$ of the degenerate model discussed in Example~\ref{ex:toy}.
\end{example}

\subsection{Reduction Algorithms}\label{sec:lr}
We aim to study a broad class of reduction algorithms that depend only on the (empirical) moments of the population pair $(\bx,y)$. More specifically, we define an algorithm to be a measurable function $\alg: \R_{\succeq0}^{p\times p}\times \R^p\rightarrow \R^{p\times p}$ that produces regression coefficient estimators as a minimum-$L_2$-norm solution to the least-squares problem of regressing response on the features projected onto a series of $p$ embedded linear subspaces. \\
Algorithms treated in this work take a deterministic matrix-vector pair $(\bA,\bb)\in\R_{\succeq0}^{p\times p}\times \R^p$ and produce $p$ embedded linear subspaces $\{\bzero_{p}\} = \mB_{\alg}^{(0)}(\bA,\bb) \subseteq \mB_{\alg}^{(1)}(\bA,\bb) \subseteq \cdots \subseteq \mB_{\alg}^{(p)}(\bA,\bb) \subseteq \R^p$, with dimensions $\dim\{\mB_{\alg}^{(t)}(\bA,\bb)\} \leq t$. Denote also $\bU_{\alg}^{(t)}(\bA,\bb)$ the corresponding orthogonal projections of $\R^p$ onto $\mB_{\alg}^{(t)}(\bA,\bb)$. Subsequently, corresponding regression coefficients $\bbeta_{\alg}^{(t)}(\bA,\bb)$ can be calculated. We refer to Appendix~\ref{sec:rls:alg} for the formal numerical definition, and to Section~\ref{sec:appl} for examples on how classical algorithms fit into our framework. \\
The algorithms that we consider need to be continuous in some sense. In this work, the notion of algorithm stability will be crucial.

\begin{definition}\label{def:reg.alg.stab}
	Let $(\bA,\bb)\in\R_{\succeq0}^{p\times p}\times \R^p$ be fixed and consider any subspace $\mB_\alg(\bA,\bb)$ together with the corresponding orthogonal projection $\bU_\alg(\bA,\bb)$. Let also $(\wt{\bA},\wt{\bb})\in\R_{\succeq0}^{p\times p}\times \R^p$ be any other matrix-vector pair and set
$$
		{\eps}(\wt\bA,\wt\bb,\bA,\bb) = \frac{\|\wt{\bA}-\bA\|_{op}}{\|\bA\|_{op}} \vee \frac{\|\wt{\bb}-\bb\|_{2}}{\|\bb\|_{2}}.
$$
The algorithm $\alg(\bA,\bb)$ is said to be {\it stable} if there exist constant $C_\alg(\bA,\bb)\geq1$ such that $\|{\bU}_{\alg}(\wt\bA,\wt\bb)-\bU_{\alg}(\bA,\bb)\|_{op} \leq C_{\alg}(\bA,\bb)\ {\eps}(\wt\bA,\wt\bb,\bA,\bb)$, where ${\bU}_{\alg}(\wt\bA,\wt\bb)$ is the orthogonal projection onto  any $\mB_\alg(\wt{\bA},\wt{\bb})$ such that $\dim\{{\mB}_{\alg}(\wt\bA,\wt\bb)\}=\dim\{\mB_{\alg}(\bA,\bb)\}$. \end{definition}
In connection to Definition \ref{def:reg.alg.stab}, it is also useful to define a constant 
$$
		M_{\alg}(\bA,\bb)= 2 \ \kappa_{2}(\bA_U)  \{4\ C_{\alg}(\bA,\bb) + 1\}  \left\{\frac{\|\bA\|_{op}}{\|\bA_U\|_{op}} \vee \frac{\|\bb\|_{2}}{\|\bb_U\|_{2}}\right\},
$$
where $\bA_U=\bU_{\alg}(\bA,\bb)\bA\bU_{\alg}(\bA,\bb)$ and $\bb_U=\bU_\alg(\bA,\bb)\bb$.

This notion of stability under perturbation is formalized in full generality in Section~\ref{sec:rls:alg.pert}, together with examples showing that classical algorithms are stable.

\subsubsection{Population Reduction Algorithms and Interpretability}\label{sec:lr:pop}
A population reduction algorithm $\alg(\bSigma,\bsigma)$ takes $\bSigma$ and $\bsigma$ as input and produces coefficients
$
	\bbeta_{\alg}^{(t)}=\bbeta_{\alg}^{(t)}(\bSigma,\bsigma)$, $t=1,\ldots,p$ by constructing linear subspaces $\mB_{\alg}^{(t)}=\mB_\alg^{(t)}(\bSigma,\bsigma)$ such that $\{\bzero_{p}\} = \mB_{\alg}^{(0)} \subseteq \mB_{\alg}^{(1)} \subseteq \cdots \subseteq \mB_{\alg}^{(p)} \subseteq \R^p$ with dimensions $\dim(\mB_{\alg}^{(t)}) \leq t$. More specifically, $\bbeta_\alg^{(t)}=(\bU_{\alg}^{(t)}\bSigma \bU_{\alg}^{(t)})^\dagger\bU_{\alg}^{(t)}\bsigma$, where $\bU_{\alg}^{(t)}=\bU_{\alg}^{(t)}(\bSigma,\bb)$ denotes the orthogonal projections onto corresponding $\mB_{\alg}^{(t)}$. In particular, it is assumed that $\bbeta_{\alg}^{(p)}=\bbeta_{\ls}(\bSigma,\bsigma)=\bbeta$ recovers the population least-squares solution. \\
In this section, the performance of such population reduction algorithms $\alg(\bSigma,\bsigma)$ for estimation of the $\tau$-identifiable parameter $\bbeta_{s}$ is studied. First, we define properties of algorithms that are necessary to upper-bound the error made by applying algorithms to $(\bx,y)$ instead of $(\bx_s,y)$.\\
Lemma~\ref{lem:ls.rel.pop} shows that the population least-squares solution $\bbeta_{\ls}(\bSigma,\bsigma)=\bbeta_{\ls}(\bSigma_{y},\bsigma_y)$ only depends on the moments of the relevant pair $(\bx_{y},y)$ and the irrelevant information is implicitly discarded. Since an algorithm only has access to the moments of the whole population pair $(\bx,y)$, it is paramount for the identification of $\bbeta_{s}$ to devise methods that can implicitly adapt to the unknown relevant directions.
\begin{definition}\label{def:reg.alg.pop.adap}
	A population algorithm $\alg(\bSigma,\bsigma)$ is said to be {\it adaptive} if 
	$\alg(\bSigma,\bsigma) = \alg(\bSigma_{y},\bsigma_{y})$.
\end{definition}
As a rule of thumb, population algorithms $\alg(\bSigma,\bsigma)$ that make decisions based on the components of $\bbeta_{\ls}(\bSigma,\bsigma)$ are expected to be adaptive. We refer to Section~\ref{sec:appl} below for more detailed examples.\\
Adaptivity alone guarantees only that such algorithms are not misled by irrelevant information. However, it does not, by itself, ensure that the relevant information is used efficiently. 
Let $\mB_\alg^*=\mB_\alg^{(p)}(\bSigma_s,\bsigma_s)$, that is $\mB_\alg^*$ is a linear subspace produced by an algorithm that has an oracle knowledge of the relevant subspace $\mB_s$.
\begin{definition}\label{def:reg.alg.pop.parsim}
	A population algorithm $\alg(\bSigma,\bsigma)$ is said to be {\it parsimonious} if $\mB_{\alg}^*\subseteq\mB_{s}$.
\end{definition}

As a rule of thumb, population algorithms $\alg(\bSigma_{s},\bsigma_{s})$ that make decisions based on the spectrum of $\bSigma_{s}$ are expected to be parsimonious. We refer to Section~\ref{sec:appl} below for more detailed examples.
Finally, we can define statistically interpretable algorithms.\begin{definition}\label{def:reg.alg.pop.interp}
	A population algorithm $\alg(\bSigma,\bsigma)$ is said to be {\it statistically interpretable} for the $\tau$-identifiable parameter $\bbeta_{s}$ if it is adaptive, parsimonious, and $\alg(\bSigma_s,\bsigma_s)$ is stable.
\end{definition}
Our next goal is to evaluate the error a statistically interpretable algorithm $\alg(\bSigma,\bsigma)$ makes in estimating the $\tau$-interpretable $\bbeta_s$. First note that $\bbeta^*_\alg=\bbeta_\alg^{(p)}(\bSigma_s,\bsigma_s)=\bSigma_s^\dagger\bsigma_s=\bbeta_s$. That is, if a statistically interpretable algorithm is applied with an oracle knowledge of $\bSigma_s$ and $\bsigma_s$, it produces $\bbeta_s$, while  $\mB_{\alg}^*\subseteq\mB_{s}$. Let now $\bbeta_\alg=\bbeta_\alg^{(t)}(\bSigma,\bsigma)$, where $t$ is such that $\mbox{dim}(\mB_\alg^*)=\mbox{dim}(\mB_\alg^{(t)})$. Hence, $\bbeta_\alg$ and $\bbeta_\alg^*=\bbeta_s$ are compatible.\\
The following theorem shows that statistical interpretability is a sufficient condition for a population algorithm to estimate the $\tau$-identifiable parameter and provides an upper bound on the estimation error of such algorithms. By means of examples, we show in Section~\ref{sec:appl} that this condition is nearly necessary since algorithms that are not statistically interpretable can have arbitrarily large population error. 

\begin{theorem}\label{thm:x.y.reg.alg.pop}
	Let $(\bx,y)\in\R^p\times\R$ satisfy Assumption~\ref{ass:x.y.lm.2nd} and let $\alg(\bSigma,\bsigma)$ be a population statistically interpretable algorithm. Let $\bbeta_{\alg}^*\in\mB_{\alg}^*$ and $\bbeta_{\alg}\in\mB_{\alg}^{(t)}$ be compatible parameters with $\dim(\mB_{\alg}^*)=\dim(\mB_{\alg}^{(t)})$ and recall that $\bbeta_\alg^*=\bbeta_s$. If the size of the perturbation is $\eps^*=\eps(\bSigma_{y},\bsigma_{y},\bSigma_s,\bsigma_s) < 1/M_{\alg}(\bSigma_s,\bsigma_s)$, then the population error is
	\begin{align*}
		\frac{\|\bbeta_{\alg}-\bbeta_{s}\|_{2}}{\|\bbeta_{s}\|_{2}} &\leq \frac{5}{2}\ M_{\alg}(\bSigma_s,\bsigma_s)\ \eps^*.
	\end{align*}
\end{theorem}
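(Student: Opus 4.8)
The plan is to regard $(\bSigma_s,\bsigma_s)$ as the unperturbed input and $(\bSigma_y,\bsigma_y)$ as a perturbation of relative magnitude $\eps^*$, to control the displacement of the reduction subspace by stability, and then to push that displacement through the compressed least-squares map $(\bU,\bA,\bb)\mapsto(\bU\bA\bU)^\dagger\bU\bb$. \emph{Reduction step.} By adaptivity $\alg(\bSigma,\bsigma)=\alg(\bSigma_y,\bsigma_y)$, so $\bbeta_\alg=\bbeta_\alg^{(t)}(\bSigma,\bsigma)=\wt\bM^\dagger\wt\bU\bsigma_y$ with $\wt\bU=\bU_\alg^{(t)}(\bSigma_y,\bsigma_y)$, $\wt\bM=\wt\bU\bSigma_y\wt\bU$ and $\dim\mR(\wt\bU)=\dim\mB_\alg^*$. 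By parsimoniousness $\mB_\alg^*\subseteq\mB_s$, and since the algorithm attains the least-squares solution at step $p$ we have $\bbeta_\alg^*=(\bU^*\bSigma_s\bU^*)^\dagger\bU^*\bsigma_s=\bSigma_s^\dagger\bsigma_s=\bbeta_s$ for $\bU^*=\bU_\alg^{(p)}(\bSigma_s,\bsigma_s)$ the projection onto $\mB_\alg^*$; the step-$p$ recovery identifies $\mB_\alg^*$ with $\mR(\bSigma_s)=\mB_s$ once parsimoniousness is imposed, so $\bU^*=\bU_{\mB_s}$, whence $\bU^*\bSigma_s\bU^*=\bSigma_s$, $\bU^*\bsigma_s=\bsigma_s$, the ratio factor in $M_\alg(\bSigma_s,\bsigma_s)$ is $1$, and $M_\alg(\bSigma_s,\bsigma_s)=2\kappa_2(\bSigma_s)\{4C_\alg(\bSigma_s,\bsigma_s)+1\}$. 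Three identities, all inherited from defining $\mB_s$ through the eigenspaces of $\bSigma_y$, do the bookkeeping: $\bSigma_s=\bU^*\bSigma_y\bU^*$, $\bsigma_s=\bU^*\bsigma_y$, and $\bsigma_s=\bSigma_s\bbeta_s$ with $\bbeta_s\in\mB_s$ (the population regression in $\mB_s$ has zero residual); the last one is what keeps only a single power of $\kappa_2(\bSigma_s)$ in the conclusion.

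\emph{Subspace and moment perturbations.} Stability of $\alg(\bSigma_s,\bsigma_s)$ --- the third ingredient of statistical interpretability --- applied via Definition~\ref{def:reg.alg.stab} with base pair $(\bSigma_s,\bsigma_s)$ and perturbed pair $(\bSigma_y,\bsigma_y)$, gives $\delta:=\|\wt\bU-\bU^*\|_{op}\le C_\alg(\bSigma_s,\bsigma_s)\,\eps^*$, legitimate since $\dim\mR(\wt\bU)=\dim\mR(\bU^*)$. Using $\bSigma_s=\bU^*\bSigma_y\bU^*$, $\|\bSigma_y\|_{op}=\|\bSigma_s\|_{op}$ and splitting $\wt\bM-\bSigma_s=(\wt\bU-\bU^*)\bSigma_y\wt\bU+\bU^*\bSigma_y(\wt\bU-\bU^*)$ yields $\|\wt\bM-\bSigma_s\|_{op}\le2\delta\|\bSigma_s\|_{op}$; likewise $\wt\bU\bsigma_y-\bsigma_s=(\wt\bU-\bU^*)\bsigma_y$ gives $\|\wt\bU\bsigma_y-\bsigma_s\|_2\le\delta(1+\eps^*)\|\bsigma_s\|_2$. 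The hypothesis $\eps^*<1/M_\alg(\bSigma_s,\bsigma_s)$ forces $\delta<\tfrac18\kappa_2(\bSigma_s)^{-1}$; feeding this into the Rayleigh-quotient bound $v^\top\wt\bM v=v^\top\bSigma_y v\ge\|\bSigma_s^\dagger\|_{op}^{-1}\|\bU^*v\|_2^2-2\|\bSigma_s\|_{op}\|\bU^*v\|_2\|(\bI-\bU^*)v\|_2$ for unit $v\in\mR(\wt\bU)$, together with $\|(\bI-\bU^*)v\|_2=\|(\wt\bU-\bU^*)v\|_2\le\delta$, shows $\wt\bM$ is positive definite on $\mR(\wt\bU)$ with smallest eigenvalue at least $\tfrac12\|\bSigma_s^\dagger\|_{op}^{-1}$; hence $\mR(\wt\bM)=\mR(\wt\bU)$, $\wt\bM^\dagger\wt\bM=\wt\bU$ and $\|\wt\bM^\dagger\|_{op}\le2\|\bSigma_s^\dagger\|_{op}$.

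\emph{Assembly.} Write $\bbeta_\alg-\bbeta_s=\wt\bM^\dagger(\wt\bU\bsigma_y-\bsigma_s)+(\wt\bM^\dagger-\bSigma_s^\dagger)\bsigma_s$ and, in the second term, substitute $\bsigma_s=\bSigma_s\bbeta_s$, $\bSigma_s^\dagger\bSigma_s\bbeta_s=\bbeta_s$ and $\wt\bM^\dagger\wt\bM=\wt\bU$ to get $(\wt\bM^\dagger-\bSigma_s^\dagger)\bsigma_s=\wt\bM^\dagger(\bSigma_s-\wt\bM)\bbeta_s-(\bU^*-\wt\bU)\bbeta_s$. Bounding the three pieces by $\|\wt\bM^\dagger\|_{op}\|\wt\bU\bsigma_y-\bsigma_s\|_2$, $\|\wt\bM^\dagger\|_{op}\|\bSigma_s-\wt\bM\|_{op}\|\bbeta_s\|_2$ and $\delta\|\bbeta_s\|_2$, then dividing by $\|\bbeta_s\|_2$ and using $\|\bbeta_s\|_2\ge\|\bsigma_s\|_2/\|\bSigma_s\|_{op}$ (valid because $\bsigma_s=\bSigma_s\bbeta_s$), the relative error is at most a numerical multiple of $\kappa_2(\bSigma_s)C_\alg(\bSigma_s,\bsigma_s)\eps^*$ plus $C_\alg(\bSigma_s,\bsigma_s)\eps^*$; comparing the accumulated constants against $2\kappa_2(\bSigma_s)\{4C_\alg(\bSigma_s,\bsigma_s)+1\}=M_\alg(\bSigma_s,\bsigma_s)$ bounds everything by $\tfrac52 M_\alg(\bSigma_s,\bsigma_s)\eps^*$.

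The delicate point is the compressed-pseudoinverse estimate in the middle step: because $\wt\bM$ and $\bSigma_s$ are supported on different (though equidimensional) subspaces, no Neumann-series argument applies out of the box, and one must exploit that $\mR(\wt\bU)$ is $\delta$-close to $\mB_s$ to certify that the compression of $\bSigma_y$ to $\mR(\wt\bU)$ does not collapse the smallest relevant eigenvalue; it is precisely the requirement $\delta\lesssim\kappa_2(\bSigma_s)^{-1}$ that pins down the admissibility threshold $1/M_\alg(\bSigma_s,\bsigma_s)$, and the slack in the surrounding triangle inequalities that pins down $\tfrac52$. I expect the cleanest write-up isolates the second and third paragraphs as a self-contained perturbation lemma for the compressed least-squares map (the content announced for Section~\ref{sec:rls:alg.pert}, where the ratio $\|\bA\|_{op}/\|\bA_U\|_{op}\vee\|\bb\|_2/\|\bb_U\|_2$ in $M_\alg$ must be carried in general), and the theorem then follows by specialization with $(\bA,\bb)=(\bSigma_s,\bsigma_s)$ and $(\wt\bA,\wt\bb)=(\bSigma_y,\bsigma_y)$.
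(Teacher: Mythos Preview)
Your overall architecture---use adaptivity to pass from $(\bSigma,\bsigma)$ to $(\bSigma_y,\bsigma_y)$, treat $(\bSigma_y,\bsigma_y)$ as a perturbation of $(\bSigma_s,\bsigma_s)$, invoke stability to control the subspace displacement, then propagate through the compressed least-squares map---is the paper's proof. The paper packages the last two steps as Theorem~\ref{thm:alg.pert} and then appeals to Wei's pseudoinverse-perturbation theorem (Theorem~\ref{thm:ls.pert}) on the compressed pair; your ``Assembly'' paragraph is an inline special case of that theorem, so at this level the two arguments coincide.

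There is, however, a genuine gap. You assert that ``the step-$p$ recovery identifies $\mB_\alg^*$ with $\mR(\bSigma_s)=\mB_s$ once parsimoniousness is imposed'', and from this you deduce $\bU^*=\bU_{\mB_s}$, $\bU^*\bSigma_y\bU^*=\bSigma_s$, $\bU^*\bsigma_y=\bsigma_s$, and that the ratio factor in $M_\alg(\bSigma_s,\bsigma_s)$ equals~$1$. That implication is false: parsimony gives only $\mB_\alg^*\subseteq\mB_s$, and condition~\ref{def:reg.alg.ls} in Definition~\ref{def:reg.alg} gives only $\bbeta_s\in\mB_\alg^*$, not $\mB_s\subseteq\mB_\alg^*$. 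For PLS, the terminal subspace $\mB_{\pls}^{(p)}(\bSigma_s,\bsigma_s)=\mK_p(\bSigma_s,\bsigma_s)$ is a Krylov span that is strictly contained in $\mB_s$ whenever an eigenspace of $\bSigma_s$ has multiplicity greater than one or is orthogonal to $\bsigma_s$. When $\mB_\alg^*\subsetneq\mB_s$, the difference $\bSigma_s-\wt\bM$ carries the term $\bSigma_s-\bU^*\bSigma_s\bU^*$, which is of order $\|\bSigma_s\|_{op}$ rather than $\delta\|\bSigma_s\|_{op}$; your bound $\|\wt\bM-\bSigma_s\|_{op}\le2\delta\|\bSigma_s\|_{op}$ then fails and the Assembly step collapses. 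The fix, which you anticipate in your closing remark, is to compare $\wt\bM=\wt\bU\bSigma_y\wt\bU$ with $\bA^*:=\bU^*\bSigma_s\bU^*$ (and $\wt\bU\bsigma_y$ with $\bb^*:=\bU^*\bsigma_s$) rather than with $\bSigma_s,\bsigma_s$, carry the ratio $\|\bSigma_s\|_{op}/\|\bA^*\|_{op}\vee\|\bsigma_s\|_2/\|\bb^*\|_2$ throughout, and run your Assembly using $\bbeta_s=(\bA^*)^\dagger\bb^*$ and $\bb^*=\bA^*\bbeta_s$ (both of which \emph{do} hold, because $\bbeta_s\in\mB_\alg^*$). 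This is precisely what the paper does in the proof of Theorem~\ref{thm:alg.pert}; the ratio factor is part of $M_\alg(\bSigma_s,\bsigma_s)$ and cannot be dropped.
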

The proof, as well as the extension of this result that includes early-stopping, is given in Appendix~\ref{app:proof:lm}. \\
From Lemma~\ref{lem:x.rel.y.eps} follows that the size of the perturbation $\eps^*$ is at most proportional to $\kappa_{2}(\bSigma_{s+1})^{-1} < \tau^{-2}$. 
By the definition, $M_{\alg}(\bSigma_s,\bsigma_s)$ is proportional to $\kappa_{2}(\bSigma_{s}) \leq \tau^2$. If in an ill-posed setting the ratio between the $s$-th and the $(s+1)$-th eigenvalue of $\bSigma_y$ goes to infinity, then $M_\alg (\bSigma_s,\bsigma_s)\eps^*=o(1)$. \\
	We address here all our assumptions leading to the population error bounds in Theorem~\ref{thm:x.y.reg.alg.pop}. This result is a special case of a general perturbation bound which we establish with Theorem~\ref{thm:alg.pert} in Section~\ref{sec:rls:alg.pert}. Assumption~\ref{ass:x.y.lm.2nd} is the weakest condition under which the set of non-trivial solutions to the population least-squares problem exists. 
	An assumption of adaptivity means that the population algorithm implicitly discards irrelevant information about the response. This is a sufficient condition that we exploit to remove the contribution of the irrelevant features from the perturbation size and use $\eps^*$ instead of $\eps(\bSigma,\bsigma,\bSigma_s,\bsigma_s)$. The threshold $1/M_{\alg}(\bSigma_s,\bsigma_s)$ quantifies the level of ill-posedness of the relevant features.
	An assumption of parsimony implies that it is harmless to replace the $\tau$-interpretable parameter $\bbeta_{s}$ with the parameter $\bbeta_{\alg}^*$ computed by the population algorithm with oracle knowledge. 
	Finally, the required stability of an algorithm $\mA(\bSigma_s,\bsigma_s)$ is natural, and the stability of classical algorithms has been investigated in the literature. The stability of the population PCR algorithm is shown by \cite{davis1970rot} and \cite{godunov1993gua}. The stability of the population PLS algorithm is discussed by \cite{carpraux1994SotK} and \cite{kuznetsov1997Pbot}. For the stability of penalized sparse methods, one can consult \cite{cerone2019lin} and \cite{fosson2020spa}. See Section~\ref{sec:rls:alg.pert} for more details.
	
\subsubsection{Sample Reduction Algorithms and Consistency}\label{sec:lm:sam}
Consider a dataset $(\bX,\by)\in\R^{n\times p}\times\R^n$ consisting of $n\geq1$ i.i.d. realizations $(\bx_{i},y_{i})$ of the same population pair $(\bx,y)\in\R^p\times\R$ under Assumption~\ref{ass:x.y.lm.2nd}. In this section, we investigate the performance of sample reduction algorithms $\alg(\wh{\bSigma},\wh{\bsigma})$ that depend only on the sample moments $\wh{\bSigma}=n^{-1}\bX^\top\bX$ and $\wh{\bsigma}=n^{-1}\bX^\top\by$ estimated from the dataset $(\bX,\by)$. We denote  $\wh{\bbeta}_{\ls}(\bX,\by) = \wh{\bSigma}^\dagger\wh{\bsigma}$  the corresponding sample least-squares solution. Similar to the population case, $\alg(\wh{\bSigma},\wh{\bsigma})$ computes coefficients $\wh{\bbeta}_{\alg}^{(t)}={\bbeta}_{\alg}^{(t)}(\wh\bSigma,\wh\bsigma)$, $t=1,\ldots,p$ and $
	\wh{\bbeta}_{\alg} $ is the estimator of $\bbeta_\alg$, both having the same degrees-of-freedom.
\begin{assumption}[Linear, $q>4$ moments]\label{ass:x.y.lm.4th}
	Let $(\bx,y)\in\R^p\times\R$ satisfy Assumption~\ref{ass:x.y.lm.2nd} and let for some $q>4$
	\begin{align*}
		L_{y} =\frac{\E(y^q)^{\frac{1}{q}}}{\E(y^2)^{\frac{1}{2}}}<\infty,\quad
		L_{\bx} = \sup_{\bv\in\partial\B_{2}^p(1)} \frac{\E(|\bx^\top\bv|^q)^{1/q}}{\E(|\bx^\top\bv|^2)^{1/2}}<\infty.
	\end{align*}
\end{assumption}

Let $(\bx,y)\in\R^p\times\R$ satisfy Assumption~\ref{ass:x.y.lm.4th} and denote
\begin{align}\label{eq:x.proj.geom}
	r_{\bx} = \rank(\bSigma),\quad 
	\rho_{\bx} = \frac{\E(\|\bx\|_{2}^2)}{\|\bSigma\|_{op}},\quad 
	\rho_{\bx,n} = \frac{\E(\max_{1\leq i\leq n} \|\bx_{i}\|_{2}^q)^{2/q}}{\|\bSigma\|_{op}}.
\end{align}
The rank $r_{\bx}$ is the dimension of the span of the support of $\bx$. The effective rank $\rho_{\bx} \leq r_{\bx}$ can be rewritten as the weighted average $\Tr(\bSigma)/\|\bSigma\|_{op}$ and measures the interplay between dimension and variation. The uniform effective rank $\rho_{\bx,n}$ accounts for the variability of a sample of i.i.d. realizations of $\bx$. We define the sequence
\begin{align} \label{eq:x.y.delta.star.n}
	\delta_{n} = \sqrt{\frac{\rho_{\bx}}{n}} + \frac{\rho_{\bx,n}}{n},
\end{align}
summarizing the intrinsic geometrical complexity.

\begin{theorem} \label{thm:x.y.reg.alg.sam}
	Let $(\bx,y)\in\R^p\times\R$ satisfy Assumption~\ref{ass:x.y.lm.4th}. Let $(\bX,\by)\in\R^{n\times p}\times\R^n$ be a dataset of i.i.d. realizations of $(\bx,y)$. Let $\alg(\wh\bSigma,\wh\bsigma)$ be a sample reduction algorithm such that $\alg(\bSigma,\bsigma)$ is stable in the sense of Definition~\ref{def:reg.alg.stab}. With $\delta_{n}$ the complexity in Equation~\eqref{eq:x.y.delta.star.n}, some absolute constant $C\geq1$, $K = \max\{C,\ L_{y} L_{\bx}\ \sigma_{y} \|\bSigma\|_{op}^{{1}/{2}}/\|\bsigma\|_{2}\}$, assume $\lim_{n\to\infty}\delta_{n}=0$ and $K M_{\alg}(\bSigma,\bsigma) \delta_{n} < 1/2$. Then, for any $K M_{\alg}(\bSigma,\bsigma) \delta_{n} < \nu_{n} < 1/2$, the size of the sample perturbation $\wh\eps=\eps(\wh\bSigma,\wh\bsigma,\bSigma,\bsigma)$ satisfies 
	$\P\left(\wh{\eps} \leq K {\delta_n}/{\nu_n}\right) \geq 1-2\nu_{n}.$ On this event, the sample error is bounded by
	\begin{align*}
		\frac{\|\wh{\bbeta}_{\alg} - \bbeta_{\alg}\|_{2}}{\|\bbeta_{\alg}\|_{2}} &\leq \frac{5}{2}\  M_{\alg}(\bSigma,\bsigma)\  \wh\eps .
	\end{align*}    
\end{theorem}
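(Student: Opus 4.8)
The plan is to establish Theorem~\ref{thm:x.y.reg.alg.sam} by combining a concentration inequality for the sample moments with the deterministic perturbation bound already available in the population setting. The key observation is that the sample error $\|\wh\bbeta_\alg - \bbeta_\alg\|_2/\|\bbeta_\alg\|_2$ is nothing but the output of a deterministic perturbation analysis in which $(\bSigma,\bsigma)$ plays the role of the reference pair and $(\wh\bSigma,\wh\bsigma)$ the perturbed pair; so once we know that $\wh\eps = \eps(\wh\bSigma,\wh\bsigma,\bSigma,\bsigma)$ is small with high probability, the error bound $\tfrac52 M_\alg(\bSigma,\bsigma)\wh\eps$ follows verbatim from the general perturbation result (the same machinery behind Theorem~\ref{thm:x.y.reg.alg.pop}, i.e. Theorem~\ref{thm:alg.pert} in Section~\ref{sec:rls:alg.pert}), applied with $\bA=\bSigma$, $\bb=\bsigma$, $\wt\bA=\wh\bSigma$, $\wt\bb=\wh\bsigma$. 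The condition $K M_\alg(\bSigma,\bsigma)\delta_n < 1/2$ together with $\wh\eps \le K\delta_n/\nu_n < 1/M_\alg(\bSigma,\bsigma)$ ensures we are in the regime where that deterministic bound is valid.

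\textbf{Step 1: the probabilistic estimate on $\wh\eps$.} First I would bound, separately, the relative operator-norm deviation $\|\wh\bSigma - \bSigma\|_{op}/\|\bSigma\|_{op}$ and the relative Euclidean deviation $\|\wh\bsigma - \bsigma\|_2/\|\bsigma\|_2$, since $\wh\eps$ is their maximum. Under Assumption~\ref{ass:x.y.lm.4th} the features and response have $q>4$ moments with the normalized constants $L_\bx, L_y$ controlled, so $\wh\bSigma = n^{-1}\sum_i \bx_i\bx_i^\top$ concentrates around $\bSigma$ at the rate governed by the effective rank: the expected deviation is of order $\|\bSigma\|_{op}(\sqrt{\rho_\bx/n} + \rho_{\bx,n}/n)$, which is exactly $\|\bSigma\|_{op}\delta_n$ — this is a standard heavy-tailed covariance estimation bound (à la Koltchinskii–Lounici / Oliveira, with the uniform effective rank $\rho_{\bx,n}$ absorbing the heavy-tailed truncation term). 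For $\wh\bsigma = n^{-1}\sum_i \bx_i y_i$, Cauchy–Schwarz in $q$-norm gives that $\E(\|\wh\bsigma-\bsigma\|_2)$ is controlled by $\sigma_y\|\bSigma\|_{op}^{1/2} L_y L_\bx\,\delta_n$ up to absolute constants; dividing by $\|\bsigma\|_2$ produces precisely the second factor inside the definition of $K$. Thus $\E(\wh\eps) \lesssim K\delta_n$, and Markov's inequality yields $\P(\wh\eps > K\delta_n/\nu_n) \le 2\nu_n$ once the absolute constants are folded into $K$ and the two events are union-bounded. On the complementary event $\wh\eps \le K\delta_n/\nu_n$.

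\textbf{Step 2: invoking the deterministic bound.} On that event, since $\nu_n < 1/2$ and $K M_\alg(\bSigma,\bsigma)\delta_n < \nu_n$, we have $\wh\eps \le K\delta_n/\nu_n < 1/M_\alg(\bSigma,\bsigma)$, so the perturbation is admissible. The algorithm $\alg(\bSigma,\bsigma)$ is assumed stable, so Definition~\ref{def:reg.alg.stab} applies with the pair $(\wh\bSigma,\wh\bsigma)$, and the general perturbation bound (Theorem~\ref{thm:alg.pert}, specialized as in the proof of Theorem~\ref{thm:x.y.reg.alg.pop} but now with no adaptivity/parsimony reductions needed — we compare against $\bbeta_\alg$ itself, not $\bbeta_s$) gives
\begin{align*}
\frac{\|\wh\bbeta_\alg - \bbeta_\alg\|_2}{\|\bbeta_\alg\|_2} \le \frac{5}{2}\, M_\alg(\bSigma,\bsigma)\, \wh\eps,
\end{align*}
which is the claimed bound. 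Here $\wh\bbeta_\alg$ and $\bbeta_\alg$ are taken with the same degrees-of-freedom $t$, as stipulated, so the subspaces being compared have equal dimension and the stability hypothesis is directly usable.

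\textbf{The main obstacle} I anticipate is Step~1: proving the heavy-tailed concentration $\E\|\wh\bSigma-\bSigma\|_{op} \lesssim \|\bSigma\|_{op}\delta_n$ with the exact sequence $\delta_n = \sqrt{\rho_\bx/n} + \rho_{\bx,n}/n$ and only $q>4$ moments is delicate — it requires a truncation argument at level roughly $\max_i\|\bx_i\|_2$, a symmetrization/matrix-Bernstein step for the bounded part, and a crude bound for the tail part that is where the uniform effective rank $\rho_{\bx,n}$ (and the exponent $2/q$) enters. The response cross-moment $\wh\bsigma$ is comparatively routine given the $q$-moment control on $y$ and on linear functionals of $\bx$. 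Everything downstream of the $\wh\eps$ estimate is a direct citation of the deterministic perturbation theory, so the probabilistic lemma is the crux; I would isolate it as a standalone lemma (proved in the appendix) and keep the proof of the theorem itself to the two short steps above.
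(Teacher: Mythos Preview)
Your proposal is correct and follows essentially the same two-step architecture as the paper: isolate a probabilistic lemma (Lemma~\ref{lem:x.y.reg.alg.sam.event}) that bounds $\wh\eps$ via Markov's inequality on each of the two relative deviations, then invoke the deterministic perturbation bound Theorem~\ref{thm:alg.pert} with $(\bA,\bb)=(\bSigma,\bsigma)$ and $(\wt\bA,\wt\bb)=(\wh\bSigma,\wh\bsigma)$. Two minor calibration points: the matrix concentration $\E\|\wh\bSigma-\bSigma\|_{op}\lesssim\|\bSigma\|_{op}\delta_n$ is not reproved in the paper but quoted as Lemma~\ref{lem:mom.mat.heavy.new} (Theorem~6 of Jirak~2025), so the truncation/Bernstein program you flag as the main obstacle is externalized; and the cross-moment $\wh\bsigma$ is handled more crudely than you suggest---Lemma~\ref{lem:mult.vec.heavy} simply expands $\E\|\wh\bsigma-\bsigma\|_2^2$ as a variance sum and applies Cauchy--Schwarz with four moments, yielding the rate $L_\bx L_y\sqrt{\rho_\bx/n}\le\delta_n$ (so only the first term of $\delta_n$ is needed there).
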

We refer to Appendix~\ref{app:proof:lm} for the proof and for an extension (see Corollary~\ref{cor:x.y.reg.alg.sam}) of this result to include early stopping. Different from Theorem~\ref{thm:x.y.reg.alg.pop} for the population algorithms error, neither adaptivity nor parsimony is required to bound the sample errors in Theorem~\ref{thm:x.y.reg.alg.sam}. Under the assumptions of the latter, it is always possible to find a sequence $\nu_{n}\to0$ arbitrarily slow and $\delta_{n}/\nu_n \to 0$ when $n\to\infty$. This means that stable algorithms $\alg(\bSigma,\bsigma)$ for dimensionality reduction yield the convergence in probability $\wh{\bbeta}_{\alg} \xrightarrow{\P} \bbeta_{\alg}$, $n\rightarrow\infty$ or, in other words, all sample reduction algorithms are consistent for their population counterpart. \\
We address here assumptions leading to the sample error bounds in Theorem~\ref{thm:x.y.reg.alg.sam}. This theorem is a special case of a general perturbation bound which we establish with Theorem~\ref{thm:alg.pert} in Section~\ref{sec:rls:alg.pert}. Assumption~\ref{ass:x.y.lm.4th} is a stronger version of Assumption~\ref{ass:x.y.lm.2nd} in the sense that it requires finiteness of more than four moments instead of two. Our theoretical results hold under the regime $\rho_{\bx,n}/n \to 0$. Note that this implies $\rho_{\bx}/n \to 0$ due to $\rho_{\bx}\leq\rho_{\bx,n}$ as shown in Lemma \ref{lem:x.complexity}, whereas nothing is assumed on the ratio $p/n$ which can be arbitrarily large. \\
Combining Theorem \ref{thm:x.y.reg.alg.pop} and Theorem~\ref{thm:x.y.reg.alg.sam}, gives the upper bound on the error that a statistically interpretable algorithm $\alg(\wh\bSigma,\wh\bsigma)$ makes when estimating $\tau$-identifiable $\bbeta_s$.
\begin{theorem} \label{thm:x.y.reg.alg.pop.sam}
	Under the assumptions of Theorem~\ref{thm:x.y.reg.alg.pop} and Theorem~\ref{thm:x.y.reg.alg.sam}, on the same event of probability at least $1-2\nu_{n}$, the estimation error is
	\begin{align*}
		\frac{\|\wh{\bbeta}_{\alg} - \bbeta_{s}\|_{2}}{\|\bbeta_{s}\|_{2}} &\leq \frac{5}{2}\ M_{\alg}(\bSigma_s,\bsigma_s)\ \eps^* + \frac{35}{4}\  M_{\alg}(\bSigma,\bsigma)\ \wh\eps.
		%+ \frac{5}{2}\ \left\{1 + \frac{5}{2}\ M_{\alg}^*\ \eps^*\right\}\ K M_{\alg}\ \bigg\{\sqrt{\frac{\rho_{\bx}}{n\nu_{n}^2}} + \frac{\rho_{\bx,n}}{n\nu_{n}}\bigg\}.
	\end{align*} 
\end{theorem}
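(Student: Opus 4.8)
The plan is to bridge $\wh\bbeta_\alg$ and $\bbeta_s$ through the population object $\bbeta_\alg=\bbeta_\alg^{(t)}(\bSigma,\bsigma)$ by a single triangle inequality,
\[
\frac{\|\wh\bbeta_\alg-\bbeta_s\|_2}{\|\bbeta_s\|_2}\ \le\ \frac{\|\wh\bbeta_\alg-\bbeta_\alg\|_2}{\|\bbeta_s\|_2}\ +\ \frac{\|\bbeta_\alg-\bbeta_s\|_2}{\|\bbeta_s\|_2},
\]
and then to estimate each summand with one of the two preceding theorems. Under the combined hypotheses, $\alg(\bSigma,\bsigma)$ is statistically interpretable with $\eps^*<1/M_\alg(\bSigma_s,\bsigma_s)$, so Theorem~\ref{thm:x.y.reg.alg.pop} bounds the second summand directly by $\tfrac52 M_\alg(\bSigma_s,\bsigma_s)\,\eps^*$. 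Likewise, $\alg(\bSigma,\bsigma)$ is stable, so Theorem~\ref{thm:x.y.reg.alg.sam} applies: on the stated event of probability at least $1-2\nu_n$ (the event $\wh\eps\le K\delta_n/\nu_n$) one has $\|\wh\bbeta_\alg-\bbeta_\alg\|_2\le\tfrac52 M_\alg(\bSigma,\bsigma)\,\wh\eps\,\|\bbeta_\alg\|_2$. Since the population bound is deterministic, both estimates hold on this single event, so the probability statement is inherited.

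The only genuine step is converting the relative sample error, which Theorem~\ref{thm:x.y.reg.alg.sam} normalizes by $\|\bbeta_\alg\|_2$, into one normalized by $\|\bbeta_s\|_2$. For this I would invoke the population bound a second time: by the triangle inequality $\|\bbeta_\alg\|_2\le\|\bbeta_\alg-\bbeta_s\|_2+\|\bbeta_s\|_2\le\bigl(1+\tfrac52 M_\alg(\bSigma_s,\bsigma_s)\,\eps^*\bigr)\|\bbeta_s\|_2$, and the constraint $\eps^*<1/M_\alg(\bSigma_s,\bsigma_s)$ forces the prefactor below $1+\tfrac52=\tfrac72$. Hence $\|\wh\bbeta_\alg-\bbeta_\alg\|_2/\|\bbeta_s\|_2\le\tfrac72\cdot\tfrac52\,M_\alg(\bSigma,\bsigma)\,\wh\eps=\tfrac{35}{4}M_\alg(\bSigma,\bsigma)\,\wh\eps$, and adding the two contributions yields exactly the claimed inequality.

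There is no real obstacle here beyond bookkeeping — all the analytic content sits in Theorems~\ref{thm:x.y.reg.alg.pop} and~\ref{thm:x.y.reg.alg.sam}. The one point deserving a line of care is the alignment of degrees-of-freedom: the index $t$ defining $\bbeta_\alg$ must be the one for which $\dim(\mB_\alg^{(t)})=\dim(\mB_\alg^*)$, and $\wh\bbeta_\alg$ must be the estimator built with that same $t$, so that $\wh\bbeta_\alg$ is genuinely the sample counterpart of the population object $\bbeta_\alg$ appearing in Theorem~\ref{thm:x.y.reg.alg.pop}; this is exactly the compatibility already fixed in Section~\ref{sec:lm:sam}, recalling $\bbeta_\alg^*=\bbeta_s$. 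Once this is in place the chain of inequalities closes with no further estimates.
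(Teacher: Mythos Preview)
Your proposal is correct and follows essentially the same route as the paper: a triangle inequality through $\bbeta_\alg$, the population bound from Theorem~\ref{thm:x.y.reg.alg.pop} for the second piece, the sample bound from Theorem~\ref{thm:x.y.reg.alg.sam} for the first, and the same renormalization trick $\|\bbeta_\alg\|_2\le(1+\tfrac52 M_\alg^*\eps^*)\|\bbeta_s\|_2\le\tfrac72\|\bbeta_s\|_2$ to produce the constant $\tfrac{35}{4}$. The paper in fact derives Theorem~\ref{thm:x.y.reg.alg.pop.sam} as the special case $r=r_{\alg}^*$ of a slightly more general early-stopping statement, but for that value of $r$ the extra term vanishes and the argument collapses to exactly yours.
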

Theorem~\ref{thm:x.y.reg.alg.pop.sam} shows that the overall estimation error of a statistically interpretable sample algorithm is determined by the sum of the population and sample errors. Obviously, only statistically interpretable algorithms can achieve optimal estimation errors, while all others would incur arbitrarily large population errors.\\
Now, let us turn to the discussion on the convergence rate of the sample errors. The sample errors in Theorem~\ref{thm:x.y.reg.alg.sam} are obtained for a sufficiently large sample of i.i.d. realizations $(\bx_{i},y_{i})$. The sample error (up to a constant) is bounded by
$$
\wh\eps\lesssim \sqrt{\frac{\rho_{\bx}}{n}} + \frac{\rho_{\bx,n}}{n},
$$
giving the corresponding convergence rate, that depends on the effective rank $\rho_\bx$ and uniform effective rank $\rho_{\bx,n}$. This result is due to Theorem 6 in \cite{Jirak2025}. The effective rank $\rho_\bx$ is related to the degree of the ill-posedness of the problem (a smaller effective rank implies a higher degree of ill-posedness), while the uniform effective rank $\rho_{\bx,n}$ depends on the distribution of the features. Roughly speaking, lighter tails of the feature distribution lead to a smaller uniform effective rank. For example, under assumption of sub-Gaussianity, $\wh\eps$ is driven solely by $\sqrt{\rho_{\bx}/n}$. Avoiding any distributional assumptions and relying only on Assumption \ref{ass:x.y.lm.4th}, one can bound the rate as follows, see Lemma~\ref{lem:x.complexity} and Corollary~\ref{cor:mom.mat.heavy.new},
$$
\wh\eps\lesssim \sqrt{\frac{\rho_{\bx}}{n}}  \left(1 + L_{\bx} \sqrt{\frac{\rho_{\bx}}{n^{\frac{q-4}{q}}}} \right).
$$
This shows that if the effective rank is sufficiently small, that is, $\rho_\bx\lesssim n^{(q-4)/q}$, where $q$ is the number of feature moments, then the convergence rate is the same as for sub-Gaussian features. Note that one can construct algorithms (that cannot be implemented) which achieve the rate $\sqrt{\rho_\bx/n}$ under Assumption \ref{ass:x.y.lm.4th}, see  \cite{Bartl2025uni}.\\
The convergence rate of Theorem \ref{thm:x.y.reg.alg.sam} contrasts known results on the minimax rate $\sqrt{\rank(\bSigma)/n}$ of sample least-squares estimators of $\bbeta$, as shown by \cite{Mourtada2022} and on the minimax rate $\sqrt{{\log(p/\|\bbeta\|_0)\,\|\bbeta\|_0}/{n}} $ of sparse methods derived by \cite{Bellec2018slo}. In many applications $\bSigma$ has a full rank $p$ and sparsity is proportional to $p$ as well, that is, $\|\bbeta\|_0 = c p$ for some small constant $c \in (0,1)$. Hence, the convergence rate of both methods would be $\sqrt{p/n}$. If the problem is ill-posed, in many applications one encounters the effective rank to be $\rho_\bx=\log(p)$, leading to a much faster convergence rate $\sqrt{\log(p)/n}$. In Section \ref{sec:num} we demonstrate this effect on simulated and real data.

\section{Applications to Algorithms}\label{sec:appl}
In this section, we address the implications of our theory for existing linear dimensionality reduction algorithms. The most prominent strategies within our framework are based on unsupervised projection, sparsity, and sufficient reduction. As mentioned above, stability of classical algorithms is well-studied, see Section~\ref{sec:rls:alg.pert} for more details. In what follows, we provide examples of population algorithms that fit our framework and discuss whether they are adaptive or parsimonious. 

\subsection{Unsupervised Projections}
The class of unsupervised projection methods consists of all algorithms that project the features along directions chosen independently of the response. One prominent example is the population PCR algorithm $\pcr(\bSigma,\bsigma)$, inspired by \cite{hotelling1933anal}, which makes the following choices. For all $t=1,\ldots,p$, the algorithm selects subspaces $\mB_{\pcr}^{(t)} = \spa\{\bu_{1},\ldots,\bu_{t}\}$ spanned by the first $t$ eigenvectors of $\bSigma$ sorted according to the decreasing order of the corresponding eigenvalues, including repeated ones. The sequence of subspaces becomes constant as soon as $t\geq\rank(\bSigma)$. Therefore, $\bbeta_{\pcr}^{(p)} = \bbeta_{\ls}$ recovers the population least-squares. It is immediate to check that such an algorithm is parsimonious in the sense of Definition~\ref{def:reg.alg.pop.parsim} since the largest selected subspace is $\mB_{\pcr}^{(p)}(\bSigma_{s},\bsigma_{s})=\mR(\bSigma_{s})=\mB_{s}$. We now show that this population algorithm is not adaptive in the sense of Definition~\ref{def:reg.alg.pop.adap}. Assume that the $\tau$-identifiable parameter is one-dimensional with $\bbeta_{1}\in\mB_{1}$ and $\dim(\mB_{1})=1$. Furthermore, assume the largest eigenvalue of $\bSigma$ is strictly larger than the largest eigenvalue of $\bSigma_{y}$. Then, the population algorithm $\pcr(\bSigma_{y},\bsigma_{y})$ selects $\mB_{\pcr}^{(1)}(\bSigma_{y},\bsigma_{y}) = \spa\{\bu_{1}(\bSigma_{y})\}$ which is orthogonal to $\mB_{\pcr}^{(1)}(\bSigma,\bsigma) = \spa\{\bu_{1}(\bSigma)\}$ since they are spanned by eigenvectors of $\bSigma$ from different eigenspaces. Hence, $\pcr(\bSigma,\bsigma)\neq \pcr(\bSigma_y,\bsigma_y)$ and the PCR algorithm is not adaptive. 
In particular, this implies the relative error $\|\bbeta_{\pcr}-\bbeta_{1}\|_{2}/\|\bbeta_{1}\|_{2} = 1$ regardless of $\eps^*$, since $\bbeta_{\pcr} = \bzero_{p}$ with one degree of freedom. Obviously, any algorithm based on unsupervised projections cannot be adaptive since it is ignorant of $\bSigma_y$. 

\subsection{Sparse Projections}
The class of sparse projection methods comprises all algorithms that select active feature sets to explain the response. The method of Forward Subset Selection (FSS) inspired by \cite{beale1967} yields the population algorithm $\fss(\bSigma,\bsigma)$ that makes the following choices. Let $J_{\ls} = \{1\leq j\leq p: \be_{j}^\top\bbeta_{\ls}\neq0\}$ denote the active set of $\bbeta_{\ls}=\bbeta_{\ls}(\bSigma,\bsigma)$, which has cardinality $|J_{\ls}| = \|\bbeta_{\ls}\|_{0}$. For all $t=1,\ldots,p$, the algorithm outputs coefficients $\bbeta_{\fss}^{(t)}$ that are obtained from $\bbeta_{\ls}$ by setting to zero all the entries that are not in the selected active subset $J_{\fss}^{(t)}\subseteq J_{\ls}$. Each active subset is obtained from the previous one as $J_{\fss}^{(t)} = j_{\fss}^{(t)} \cup J_{\fss}^{(t-1)}$ where $j_{\fss}^{(t)}$ is the index that minimizes the residual $j\mapsto\E(y-\bx^\top\bbeta_{\fss}^{(t-1)} -\bx^\top\be_{j}\be_{j}^\top\bbeta_{\ls})^2$. The sequence of subspaces $\mB_{\fss}^{(t)} = \spa\{\be_{j} : j\in J_{\fss}^{(t)}\}$, which are spanned by the selected active directions, becomes constant as soon as $t\geq\|\bbeta_{\ls}\|_{0}$. Therefore, $\bbeta_{\fss}^{(p)} = \bbeta_{\ls}$ recovers the population least-squares. Since all choices made by $\fss(\bSigma,\bsigma)$ only involve the components of $\bbeta_{\ls}(\bSigma,\bsigma)$, and since Lemma~\ref{lem:ls.rel.pop} shows that $\bbeta_{\ls}(\bSigma_{y},\bsigma_{y})=\bbeta_{\ls}(\bSigma,\bsigma)$, this population algorithm is adaptive in the sense of Definition~\ref{def:reg.alg.pop.adap}. On the other hand, we now show that this population algorithm is not parsimonious in the sense of Definition~\ref{def:reg.alg.pop.parsim}. Assume the $\tau$-identifiable subspace is one-dimensional with $\mB_{1}=\spa\{{\bf1}_{p}\}$ spanned by the vector ${\bf1}_{p}=\be_{1}+\cdots+\be_{p}$, so that the $\tau$-identifiable parameter is $\bbeta_{1}=c {\bf1}_{p}$ for some $c\neq0$. In order for $\fss(\bSigma,\bsigma)$ to be parsimonious, we need that $\mB_{\fss}^{(p)}(\bSigma_{1},\bsigma_{1}) \subseteq \mB_{1}$, which is never possible, as long as $p\geq2$, since $\mB_{\fss}^{(p)}(\bSigma_{1},\bsigma_{1}) = \R^p$ has dimension $p > 1 = \dim(\mB_{1})$. In such a case, the performance of the population algorithm in Theorem~\ref{thm:x.y.reg.alg.pop} must be measured in terms of the parameter $\bbeta_{\fss} = \beta_{j}\be_{j}$ for some $j$, using $1$ degree-of-freedom. In particular, this implies $\|\bbeta_{\fss}-\bbeta_{1}\|_{2}/\|\bbeta_{1}\|_{2} \geq \sqrt{p-1}/\sqrt{p}$, regardless of the error $\eps^*$. From these considerations, it is clear that any algorithm, including Lasso and its variations, that performs sparse model selection can not be parsimonious when the model is only sparse up to an unknown rotation.

\subsection{Sufficient Projections}
The class of sufficient projection methods consists of all those algorithms that project the features along directions that preserve the conditional distribution of the response or its first two moments. One popular example is the population PLS algorithm $\pls(\bSigma,\bsigma)$, inspired by \cite{Wold66non}, which makes the following choices. For all $t=1,\ldots,p$, the algorithm selects subspaces $\mB_{\pls}^{(t)} = \spa\{\bsigma,\ldots,\bSigma^{t-1}\bsigma\}$ spanned the first $t$ Krylov vectors generated by $\bSigma,\bsigma$. We show in Lemma~\ref{lem:ls.krylov} that $\bbeta_{\ls}\in\mB_{\pls}^{(p)}$ and $\bbeta_{\pls}^{(p)} = \bbeta_{\ls}$ recovers the population least-squares. This policy is parsimonious in the sense of Definition~\ref{def:reg.alg.pop.parsim} since the largest selected subspace satisfies $\mB_{\pls}^{(p)}(\bSigma_{s},\bsigma_{s})\subseteq\mR(\bSigma_{s})=\mB_{s}$. We now show that the population algorithm is adaptive in the sense of Definition~\ref{def:reg.alg.pop.adap}. To see this, consider $\pls(\bSigma_{y},\bsigma_{y})$ and consider $\bSigma_{y}^{t-1}\bsigma_{y}$ the corresponding Krylov vectors. It is sufficient to check that $\bSigma^{t-1}\bsigma=\bSigma_{y}^{t-1}\bsigma_{y}$ for all $1\leq t\leq p$. This is done by induction. For $t=1$, the base case $\bsigma = \bsigma_{y}$ holds by definition of relevant subspace. Let now $\bSigma^{t-2}\bsigma=\bSigma_{y}^{t-2}\bsigma_{y}$ hold. Then, $\bSigma^{t-1}\bsigma = \bSigma \bSigma_y^{t-2}\bsigma_y=\bSigma_{y}\bSigma_{y}^{t-2}\bsigma_{y} + \bSigma_{y^\bot}\bSigma_{y}^{t-2}\bsigma_{y} = \bSigma_{y}^{t-1}\bsigma_{y} $. Theorem~\ref{thm:x.y.reg.alg.pop} always applies for the population PLS algorithm and the error $\|\bbeta_{\pls}-\bbeta_{s}\|_{2}/\|\bbeta_{s}\|_{2}$ becomes zero when $\eps^*=0$ as well. Naturally, sufficient projection algorithms are constructed to be adaptive, but parsimony can only be ensured by early stopping.

\subsection{Toy Model}
We work out the application of classical algorithms to the toy model from Example~\ref{ex:toy}. We fix a special case where $\beta_{1}\neq0$ and $\beta_{2}=0$. One finds
\begin{align*} 
	\bSigma(\rho) = \left(
	\begin{matrix} 
		1 & \rho & 0 \\
		\rho & 1 & 0 \\
		0 & 0 & 2
	\end{matrix}\right), \quad
	\bsigma(\rho) = \left(
	\begin{matrix} 
		\beta_{1} \\ 
		\rho\beta_{1} \\
		0
	\end{matrix}\right), \quad
	\bSigma_{y}(\rho) = \left(
	\begin{matrix} 
		1 & \rho & 0 \\
		\rho & 1 & 0 \\
		0 & 0 & 0
	\end{matrix}\right), \quad
	\bsigma_{y}(\rho) = \bsigma(\rho),
\end{align*}
and $\bbeta_{\ls}(\bSigma,\bsigma) = (\beta_{1},0,0)^\top$. The eigenvectors of the covariance matrix $\bSigma(\rho) $ are $\bu_{1}=(0,0,1)^\top$, $\bu_{2}=(1/\sqrt{2},1/\sqrt{2},0)^\top$, $\bu_{3}=(1/\sqrt{2},-1/\sqrt{2},0)^\top$ with corresponding eigenvalues $2 > 1+\rho > 1-\rho$. We have shown in Example~\ref{ex:toy:cont} that the $\sqrt{99}$-identifiable parameter is $\bbeta_{1} = (\beta_{1}/2,\beta_{1}/2,0)^\top$ and that the $\sqrt{99}$-identifiable subspace is one-dimensional with $\mB_{1}=\spa\{\bu_{2}\}$ both independent on $\rho$. We now compare how different dimensionality reduction strategies deal with the estimation of $\bbeta_{1}$ while using subspaces of dimensions exactly $\dim(\mB_{1})=1$. For unsupervised reduction, we consider $\beta_{\pcr} = \bu_{1}\bu_{1}^\top\bbeta_{\ls} = (0,0,0)^\top$ the projection of $\bbeta_{\ls}$ along the direction $\bu_{1}$ of largest variance of $\bx$. For sparse reduction, we consider $\beta_{\spr} = \be_{1}\be_{1}^\top\bbeta_{\ls} = (\beta_{1},0,0)^\top$ the projection of $\bbeta_{\ls}$ along the canonical vector $\be_{1}=(1,0,0)^\top$. For sufficient reduction, we consider $\bbeta_{\pls} = \bu_{\bsigma}(\rho)\bu_{\bsigma}(\rho)^\top\bbeta_{\ls} = \{\beta_{1}/(1+\rho^2),\rho\beta_{1}/(1+\rho^2),0\}^\top$ the projection of $\bbeta_{\ls}$ along the direction $\bu_{\bsigma}(\rho) = \bsigma(\rho)/\|\bsigma(\rho)\|_{2}$ of largest covariance between $\bx$ and $y$. We compute the errors in our Theorem~\ref{thm:x.y.reg.alg.pop} directly, without additional assumptions, and find the population error $\delta_{\alg}=\|\bbeta_{\alg}-\bbeta_{1}\|_{2}/{\|\bbeta_{1}\|_{2}}$ with $\rho=0.98$ for all three algorithms to be
\begin{align*}
	\delta_{\pcr} = 1,\ 
	\delta_{\spr} = 1,\ 
	\delta_{\pls} = \frac{(1-\rho)\sqrt{(1+\rho)^2+(1-\rho)^2}}{\sqrt{2}(1+\rho^2)} \approx 0.014,
\end{align*}
showing that the direction of maximal covariance $\bu_{\bsigma}(\rho)$ is the only one that yields a small estimation error for the $\sqrt{99}$-identifiable parameter $\bbeta_{1}$. This is true despite the direction $\be_{1}$ being the one that better explains the conditional expectation $\E(y|\bx)$. In fact, if we define the relative prediction risk for algorithms as
$$
\eps_{\alg} = \frac{\E(\{\bx^\top\bbeta_{\ls} - \bx^\top\bbeta_{\alg}\}^2)}{\|\bSigma\|_{op} \|\bbeta_{\ls}\|_{2}^2} 
$$
then for $\rho=0.98$ we find
\begin{align*}
 \eps_{\pcr} = \frac{1}{2},\quad \eps_{\spr} = 0,\quad \eps_{\pls} = \frac{\rho^2(1-\rho^2)}{2(1+\rho^2)^2} \approx 0.005,
\end{align*}
showing that the sparse vector $\beta_{\spr}$ provides the best predictor of the response, whereas $\beta_{\pls}$ has positive but small risk.

\section{Numerical Studies} \label{sec:num}
We confirm our findings with empirical studies on both simulated and real datasets.

\subsection{Simulated Data}\label{sec:num-simul}
We consider a setting that is compatible with genomics applications where $p\gg n$ and the population covariance matrix has full-rank $r_{\bx}=p$ but a small effective rank $\rho_{\bx}\ll n$.  We simulate only one setting where the underlying model is non-spare and there are irrelevant features for the response with large variation.

We simulate our dataset $(\bX,\by)$ according to the following scheme:
\begin{enumerate}[label=(\roman*)]
	\item we choose $n=200$ and $p=1000$; the features have rank $r_{\bx}=p$, the number of relevant features is $r_{y}=100$ and the true degrees-of-freedom are $r=5$;
	\item we draw a latent dataset $\bQ = (\bq_{1},\ldots,\bq_{n})^\top \in\R^{n\times r}$ as
	\begin{align*}
		\bq_{i} \overset{ind}{\sim} \mN\Big(\bzero_{r}, \diag(\bsigma_{\bq}^2)\Big) \in \R^{r},\quad 5 = (\bsigma_{\bq})_{1} > \ldots > (\bsigma_{\bq})_{r} = 1,
	\end{align*}
	which implies well-posed latent features $\bq$ with $\kappa_{2}(\bSigma^{1/2}_{\bq})=5$;
	\item we draw the latent relevant dataset $\bQ_{y} = (\bq_{y,1},\ldots,\bq_{y,n})^\top \in\R^{n\times r_{y}}$ as
	\begin{align*}
		\bq_{y,i} | \bq_{i} \overset{ind}{\sim} \mN\left( 
		\bigg(\begin{matrix}
			\bq_{i} \\
			\bzero
		\end{matrix}\bigg)
		, \diag(\bsigma_{0}^2) \right) \in \R^{r_{y}},\quad 10^{-1} = (\bsigma_{0})_{1} > \ldots > (\bsigma_{0})_{r_{y}} = 10^{-6},
	\end{align*}
	which implies ill-posed relevant latent features $\bq_{y}$ with $\kappa_{2}(\bSigma^{1/2}_{\bq_{y}}) \simeq 5 \cdot 10^{6}$;
	\item we draw the irrelevant latent dataset $\bQ_{y^\bot} = (\bq_{y^\bot,1},\ldots,\bq_{y^\bot,n})^\top \in\R^{n\times (p-r_{y})}$ as
	\begin{align*}
		\bq_{y^\bot,i} \overset{ind}{\sim} \mN\Big(\bzero, \diag(\bsigma_{y^\bot}^2) \Big) \in \R^{p-r_{y}},\quad 10 = (\bsigma_{y^\bot})_{1} > \ldots > (\bsigma_{y^\bot})_{r_{y}} = 10^{-6},
	\end{align*}
	which implies ill-posed irrelevant latent features $\bq_{y^\bot}$ with $\kappa_{2}(\bSigma^{1/2}_{\bq_{y^\bot}}) \simeq 10^{7}$;
	\item with deterministic orthonormal matrix $\bU\in\R^{p\times p}$, we assemble the observed dataset $\bX = (\bx_{1},\ldots,\bx_{n})^\top \in\R^{n\times p}$
	\begin{align*}
		\bX &= \left(\bQ_{y}\ |\ \bQ_{y^\bot}\right) \bU^\top \in \R^{n\times p},
	\end{align*}
	the eigenvalues decay exponentially and result in a small effective rank $\rho_{\bx} = \Tr(\bSigma)/\|\bSigma\|_{op} \simeq 2$;
	\item with deterministic $\balpha = (1,2,\ldots,r)^\top\in\R^{r}$ we draw the observed response vector $\by = (y_{1},\ldots,y_{n})^\top\in\R^n$ as
	\begin{align*}
		\quad y_{i} | \bq_{i} \overset{ind}{\sim} \mN\left(\bq_{i}^\top\balpha, 1\right) \in \R ;
	\end{align*}
	\item with $\bP = \bU \bI_{r_{y},p} \bI_{r,r_{y}} \in\R^{p\times r}$ we obtain the oracle coefficients $\bbeta = \bP\balpha\in\R^p$.
\end{enumerate}
In this setup, the following population model holds for the pair $(\bx,y)$. For some well-posed latent vector $\bq\in\R^{r}$, one finds both $\E(y|\bq) = \bq^\top\balpha$ with $\balpha = \bbeta_{\ls}(\bq,y)$, and $\bx = \bx_{y} \oplus \bx_{y}$ with $\E(\bx_{y}|\bq) = \bP\bq$. The vector $\bbeta = \bP\balpha$ is the oracle vector of coefficients. With $\bP_{y} = \bU \bI_{r_{y},p} \in\R^{p\times r_{y}}$, the covariance matrix of the features is $\bSigma = \bP_{y}\{\bI_{r,r_{y}}\bSigma_{\bq}\bI_{r,r_{y}}^\top + \sigma_{0}^2\bI_{r_{y}}\}\bP_{y}^\top \oplus \bSigma_{\bx_{y^\bot}}$. Similarly, the covariance vector with the response is $\bsigma = \bP\bsigma_{\bq_{y},y}$. With $\tau=5$, the best $5$-identifiable subspace is $\mB_{r} = \mR(\bP\bSigma_{\bq}\bP^\top) = \mR(\bP\bP^\top)$ since $\kappa_{2}(\bSigma^{1/2}_{r}) \simeq 4.98 < 5$. The true degrees-of-freedom are $r = \Tr(\bP\bP^\top) = 5$ and best $5$-identifiable parameter is then $\bbeta_{r} = \bbeta_{\ls}(\bP\bP^\top\bx,y)$. Both the oracle parameter $\bbeta$ and the identifiable parameter $\bbeta_{r}$ belong to the same space $\mB_{r}$. From Theorem~\ref{thm:ls.pert}, the error between the two is no larger than $\|\bbeta_{r}-\bbeta\|_{2}/\|\bbeta\|_{2} \leq 5\cdot\sigma_{0}^2 = 5\cdot10^{-2}$. In what follows, we take $\bbeta$ as the parameter of interest.

\begin{figure}[ht]
	\centering
	\includegraphics[width=1\textwidth]{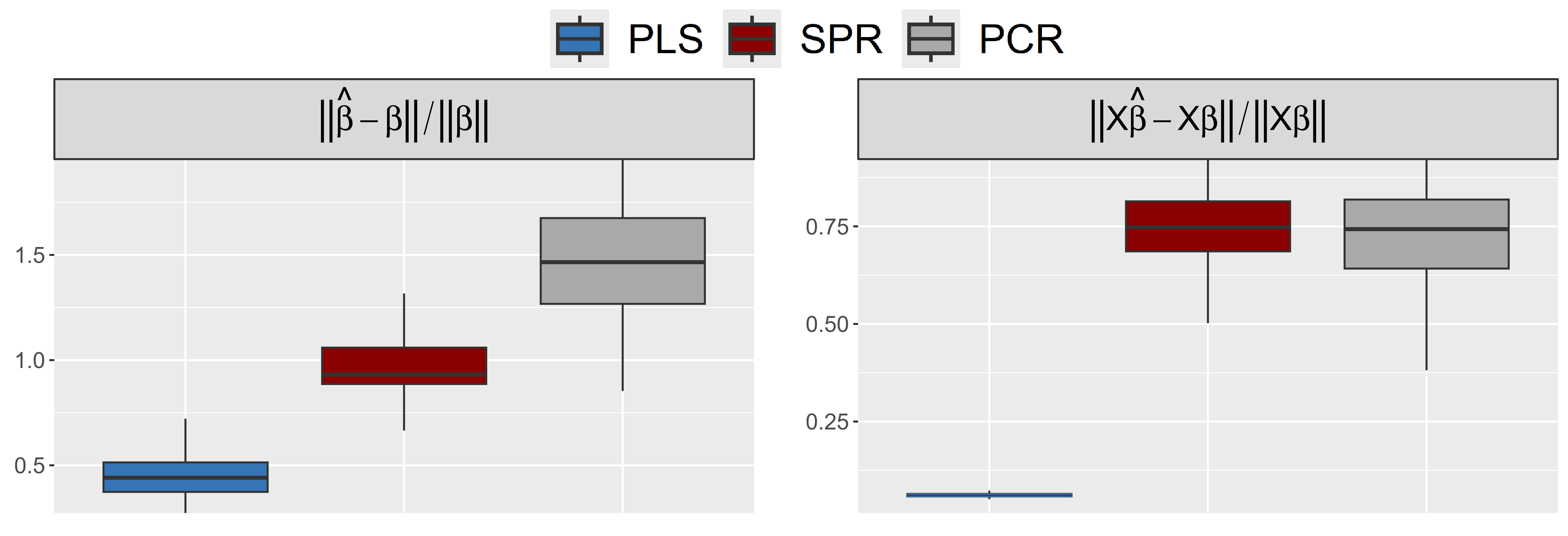}
	\vspace{-0.5cm}
	\caption{Performance of PLS (blue), SPR (red) and PCR (gray). The relative estimation error $\|\wh{\bbeta}_{s^*}-\bbeta\|_{2}/\|\bbeta\|_{2}$ and the relative approximation error $\|\bX\wh{\bbeta}_{s^*}-\bX\bbeta\|_{2}/\|\bX\bbeta\|_{2}$ of the sample estimators using oracle degrees-of-freedom $s^*$ of the oracle parameter $\bbeta$.}
	\label{fig:n>p}
\end{figure}

Over $K=500$ repetitions, we compute estimators $\wh{\bbeta}_{s^*}$ using different policies for dimensionality reduction with the oracle choice of degrees-of-freedom $s^*=r_{s}=5$. For each method, we compute the relative estimation error $\|\wh{\bbeta}_{s^*}-\bbeta\|_{2}/\|\bbeta\|_{2}$ and the relative approximation error $\|\bX\wh{\bbeta}_{s^*} - \bX\bbeta\|_{2}/\|\bX\bbeta\|_{2}$. We compare in Figure~\ref{fig:n>p} the performance of the following algorithms. For sufficient reduction, we use Partial Least Squares (PLS) implemented in \texttt{R} with \texttt{kernelpls.fit()} by \cite{Dayal1997par}. For unsupervised reduction, we use Principal Components Regression (PCR) implemented in \texttt{R} with \texttt{prcomp()} in the \texttt{stats} package. For sparse regression (SPR), we use Elastic Net with data-driven tuning-parameter optimization, implemented in \texttt{R} with \texttt{glmnet()} by \cite{Zou2005reg}. The latter solves a least-squares problem with penalization $\gamma\|\bbeta\|_{1}+(1-\gamma)\|\bbeta\|_{2}^2$ for some $\gamma\in[0,1]$ to be optimized. In this setting, we assume that the degrees of freedom $s^*$ of the problem are known. Despite this oracle knowledge, the bias of sparse and unsupervised methods is very large, since the problem is not sparse (though low-dimensional) and contains irrelevant features with large variation. The figure confirms our theoretical findings that PLS is superior when the goal is to estimate an interpretable vector of coefficients.

\subsection{Real Data}\label{sec:num-real}
We revisit the findings of \cite{krivobokova2012}, who analyzed data from MD simulations of the yeast aquaporin (Aqy1), the gated water channel of the yeast \textit{Pichia pastoris}. The data are given as Euclidean coordinates of $N=783$ atoms, thus $p=783\times 3 = 2349$ features, of Aqy1 observed in a 100-nanosecond time frame, split into $n=20000$ equidistant observations. Additionally, the diameter of the channel $y_{i}$ is measured by the distance between the centers of mass of certain residues of the protein $\bx_{i}$. We take the first half of the data as the training set $(\bX_{train},\by_{train})$ and the remaining half as the test set $(\bX_{test},\by_{test})$, each consisting of $n/2=10000$ observations. Since the data has been produced via molecular dynamics simulations, the observations $(\bx_{i},y_{i})$, $i=1,\ldots,n$, are not independent nor identically distributed, and \cite{singer2016partial} shows that PLS estimates might be inconsistent if one does not account for this dependence. We thus normalize the training data using an estimated temporal covariance matrix $\wh{\bSigma}\in\R^{n\times n}$ computed as in \cite{klockmann2023}. That is, we use $(\wt{\bX}_{train},\wt{\by}_{train})$ with $\wt{\bX}_{train}=\wh{\bSigma}^{-1/2}\bX_{train}$ and $\wt{\by}_{train}=\wh{\bSigma}^{-1/2}\by_{train}$. The rescaled dataset $(\wt{\bX}_{train},\wt{\by}_{train})$ is now treated as i.i.d., and we use it to estimate the following aspects of the problem. The condition number of the sample covariance matrix is $\kappa_{2}(\wt{\bSigma}_{\bx}) \simeq 1.3 \cdot 10^9$ whereas the effective rank is $\Tr(\wt{\bSigma}_{\bx})/\|\wt{\bSigma}_{\bx}\|_{op} \simeq 1$. Recall that the features are full-rank so $\rank(\wt{\bSigma}_{\bx}) = p = 2349$. This is a stark discrepancy suggesting a low-dimensional latent factor structure.   

\begin{figure}[ht]
	\centering
	\includegraphics[width=\textwidth]{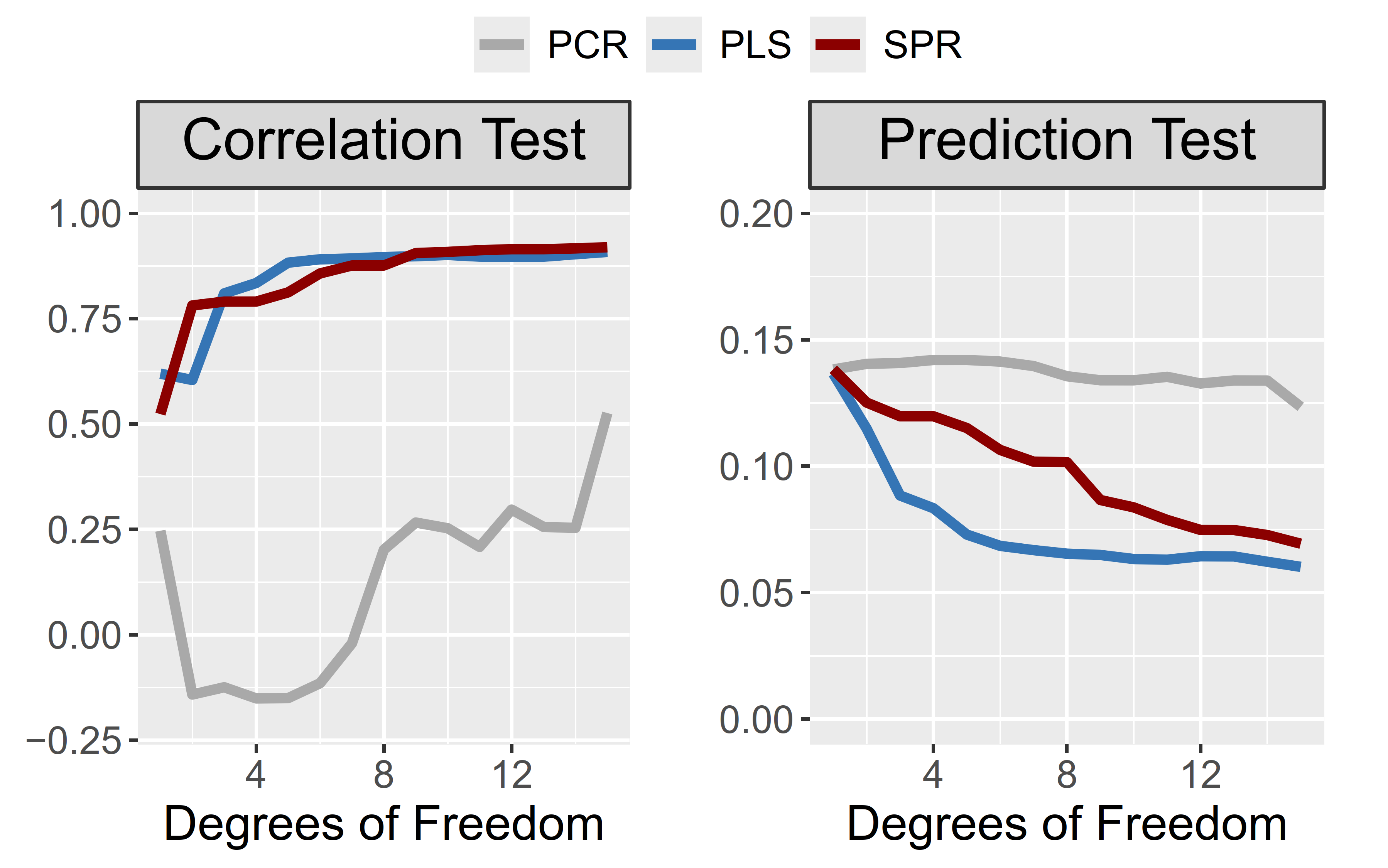}
	\caption{Comparison between PCR, PLS and SPR on the Aqy1 dataset studied by \cite{krivobokova2012} rescaled according to \cite{klockmann2023}. Correlation $\operatorname{cor}(\wh{\by}_{s},\by_{test})$ between estimated and true response on test data and relative prediction error $\|\wh{\by}_{s}-\by_{test}\|_{2}/\|\by_{test}\|_{2}$ between estimated and true response on test data.}
	\label{fig:aqy1-scale}
\end{figure}

From the training set, we compute PLS, PCR, and SPR estimators $\wh{\bbeta}_{s}$ for all degrees-of-freedom $s=1,\ldots,15$. We also compute the estimated condition number $\wh{\kappa}_{s}$ of the reduced sample covariance matrix on the training set. To evaluate the models, we compute the correlation between the estimated responses $\wh{\by}_{s} = \bX_{test} \wh{\bbeta}_{s}$ and the observed response $\by_{test}$ on the test set, together with the relative prediction error $\|\wh{\by}_{s}-\by_{test}\|_{2}/\|\by_{test}\|_{2}$. Figure~\ref{fig:aqy1-scale} shows that PLS outperforms PCR in terms of correlation and prediction on the test data, even with $s=15$, the correlation induced by PCR barely reaches $50\%$, whereas that of PLS is essentially $90\%$. Figure~\ref{fig:aqy1-scale} also shows that PLS outperforms SPR in terms of prediction, but not correlation. Not shown in the picture, the condition number for SPR at $s=15$ is $\wh{\kappa}_{\spr,15} \simeq 10^{2.25}$, which is larger than $\wh{\kappa}_{\pls,15} = \wh{\kappa}_{\pcr,15} \simeq 10^{2}$ obtained by PLS/PCR. Furthermore, we find that $s=4$ gives the largest number of degrees of freedom, where all methods have comparable condition numbers. We then use $\wh{\bbeta}_{\pls,4}$ as the benchmark parameter and find both $\|\wh{\bbeta}_{\spr,4}-\wh{\bbeta}_{\pls,4}\|_{2}/\|\wh{\bbeta}_{\pls,4}\|_{2} \simeq 1 \simeq \|\wh{\bbeta}_{\pcr,4}-\wh{\bbeta}_{\pls,4}\|_{2}/\|\wh{\bbeta}_{\pls,4}\|_{2}$, suggesting that the other methods are affected by a large bias.

\section{Discussion}\label{sec:disc}
In this work, we presented a general framework to address ill-posedness in linear regression problems. In particular, we defined identifiable parameters and statistically interpretable algorithms in this setting. We showed that only statistically interpretable algorithms have a negligible population error, while the sample error is driven by the degree of ill-posedness of the problem. \\
In the current era of artificial intelligence (AI) and machine learning (ML), sophisticated black-box algorithms have been developed to achieve the best possible prediction on unseen samples. Such flexible AI/ML algorithms tend to use all relevant information in the features and are rarely affected by ill-posedness in the features. In fact, such algorithms even leverage correlated features to improve prediction. 
At the same time, a vast body of research attempts to address the interpretability of AI/ML algorithms. Among the most prominent general-purpose methods are LIME (Local Interpretable Model-Agnostic Explanations) of \citet{Ribeiro2016}, which explains individual predictions by locally approximating the model with an interpretable surrogate, SHAP (SHapley Additive exPlanations) of \citet{Lundberg2017}, which approximates the contribution of each feature to a prediction, either locally or globally, and AGOP (Average Gradient Outer Product) by \citet{Radhakrishnan2024}, which assigns higher importance to features with larger gradients.  While SHAP is often regarded as one of the most general and mathematically principled interpretability methods grounded in cooperative game theory, its practical implementations are well known to fail when features are (highly) correlated. For example, \citet{Salih2025} demonstrated that both SHAP and LIME produce misleading attributions, particularly in the presence of collinear features, and are also dependent on the algorithm used. Also, AGOP is designed to completely ignore possible correlations among the features. Hence, the correlated features, which actually strengthen the prediction, hinder the interpretability of general algorithms. We believe that our framework can offer a way to treat statistical interpretability of broader classes of machine learning methods.

\appendix
\section{Reduction Algorithms for Least-Squares Problems}\label{sec:rls}
In this section we formalize our broad class of dimensionality reduction algorithms in terms of arbitrary matrices $\bA\in\R_{\succeq0}^{p\times p}$ and vectors $\bb\in\mR(\bA)$. One recovers the population reduction algorithms studied in Section~\ref{sec:lr:pop} when the matrix-vector pair consists of population moments $(\bSigma,\bsigma)$,  or the sample reduction algorithms studied in Section~\ref{sec:lm:sam} when the matrix-vector pair consists of sample moments $(\wh{\bSigma},\wh{\bsigma})$. Either way, the algorithms are deterministic. We also formalize a notion of stability with respect to small perturbations that is required to obtain general perturbation bounds. Such notion of of stability is measured with respect to the principal-angle between linear subspaces and the operator norm for orthogonal projection matrices. Before stating our main results, we recall the relevant facts from the numerical literature.

\subsection{Principal-Angle for Compatible Linear Subspaces} \label{sec:rls:dist} 
Two linear subspaces $\mE\subseteq\R^p$ and $\wt{\mE}\subseteq\R^p$ are orthogonal, denoted by $\mE\bot\wt{\mE}$, if $\be^\top\wt{\be}=0$ for all $\be\in\mE, \wt{\be}\in\wt{\mE}$. We say that two linear subspaces $\mE$ and $\wt{\mE}$ of $\R^p$ are compatible if they have the same dimension $1\leq d\leq p$. If $\bE=[\be_{1}|\cdots|\be_{d}] \in\R^{p\times d}$ is any orthonormal basis of $\mE$ and $\wt{\bE}=[\wt{\be}_{1}|\cdots|\wt{\be}_{d}] \in\R^{p\times d}$ is any orthonormal basis of $\wt{\mE}$, the singular values of $\bE^\top\wt{\bE} \in\R^{d\times d}$ do not depend on the choice of orthonormal bases and can be written as $\bsigma(\wt{\mE},\mE) = (\sigma_{1}(\wt{\mE},\mE),\ldots,\sigma_{d}(\wt{\mE},\mE))^\top\in\R^d$. Since the vectors are orthonormal, one finds $0\leq \sigma_{i}(\wt{\mE},\mE)\leq 1$ for all $1\leq i\leq d$ and can uniquely define the angle $0\leq \phi_{i}(\wt{\mE},\mE)\leq \pi/2$ between $\be_{i}$ and $\wt{\be}_{i}$ by $\phi_{i}(\wt{\mE},\mE) = \arccos(\sigma_{i}(\wt{\mE},\mE))$. We denote the collection of angles between compatible linear subspaces as
\begin{align}\label{eq:subsp.angles}
	\frac{\pi}{2} \geq \phi_{1}(\wt{\mE},\mE) \geq \ldots \geq \phi_{d}(\wt{\mE},\mE) \geq 0.
\end{align}
A detailed reference on angles is given by \cite{davis1970rot}. With $\bU_{\mE}$ and $\bU_{\wt{\mE}}$ respectively the orthogonal projections of $\R^p$ onto $\mE$ and $\wt{\mE}$, it is a classical result, for a proof see Theorem~5.1 by \cite{godunov1993gua}, that 
\begin{align}\label{eq:subsp.dist.angle}
	\phi_{1}(\wt{\mE},\mE) = \arcsin\left(\|\bU_{\wt{\mE}}-\bU_{\mE}\|_{op}\right),
\end{align}
which we refer to as the {\it principal-angle}. Following Section~8.6 by \cite{berger1987geo} one can prove the following properties of the principal-angle.

\begin{lemma}\label{lem:subsp.dist}
	The principal-angle $\phi_{1}(\cdot,\cdot)$ in Equation~\eqref{eq:subsp.dist.angle} satisfies the following:
	\begin{enumerate}[label=(\roman*),itemsep=0.25em,topsep=0.25em]
		\item $0 \leq \phi_{1}(\wt{\mE},\mE) = \phi_{1}(\mE,\wt{\mE}) \leq \frac{\pi}{2}$;
		\item $\phi_{1}(\wt{\mE},\mE) = 0 \iff \wt{\mE}=\mE$, whereas $\wt{\mE}\bot\mE \implies \phi_{1}(\wt{\mE},\mE) = \frac{\pi}{2}$;
		\item $\phi_{1}(\wh{\mE},\wt{\mE}) + \phi_{1}(\wt{\mE},\mE) \leq \frac{\pi}{2} \implies \phi_{1}(\wh{\mE},\mE) \leq \phi_{1}(\wh{\mE},\wt{\mE}) + \phi_{1}(\wt{\mE},\mE)$.
	\end{enumerate}
\end{lemma}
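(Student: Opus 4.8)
The plan is to derive all three items from the single identity $\sin\phi_{1}(\wt{\mE},\mE)=\|\bU_{\wt{\mE}}-\bU_{\mE}\|_{op}$ of Equation~\eqref{eq:subsp.dist.angle}, together with three elementary facts: the operator norm is unchanged under negation, an orthogonal projection is determined by its range, and each $\sigma_{i}(\wt{\mE},\mE)\in[0,1]$ so that $\phi_{1}=\arccos\sigma_{1}\in[0,\pi/2]$. Item (i) is then immediate: symmetry is $\|\bU_{\wt{\mE}}-\bU_{\mE}\|_{op}=\|\bU_{\mE}-\bU_{\wt{\mE}}\|_{op}$, and the range is $\phi_{1}\in[0,\pi/2]$. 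Item (ii) reduces to three one-line checks: $\wt{\mE}=\mE$ gives $\bU_{\wt{\mE}}=\bU_{\mE}$, hence $\phi_{1}=\arcsin 0=0$; conversely $\phi_{1}=0$ forces $\|\bU_{\wt{\mE}}-\bU_{\mE}\|_{op}=0$, so $\bU_{\wt{\mE}}=\bU_{\mE}$ and therefore $\wt{\mE}=\mR(\bU_{\wt{\mE}})=\mR(\bU_{\mE})=\mE$; and $\wt{\mE}\bot\mE$ makes any orthonormal bases satisfy $\bE^{\top}\wt{\bE}=\bzero$, so every $\sigma_{i}$ vanishes and $\phi_{1}=\arccos 0=\pi/2$.

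The substance of the lemma is item (iii). Write $\alpha=\phi_{1}(\wh{\mE},\wt{\mE})$, $\beta=\phi_{1}(\wt{\mE},\mE)$, $\gamma=\phi_{1}(\wh{\mE},\mE)$ and assume $\alpha+\beta\le\pi/2$. The strategy is to establish the pointwise estimate $\|\bU_{\mE}\bu\|\ge\cos(\alpha+\beta)$ for every unit vector $\bu\in\wh{\mE}$, and then convert it into an inequality between angles. The tools are the projection identities $(\bI-\bU_{\mF})\bU_{\mG}=(\bU_{\mG}-\bU_{\mF})\bU_{\mG}$ and $\bU_{\mG}(\bI-\bU_{\mF})=(\bU_{\mG}-\bU_{\mF})(\bI-\bU_{\mF})$, valid for arbitrary orthogonal projections; since $\|\bU_{\mG}\|_{op}\le1$ and $\|\bI-\bU_{\mF}\|_{op}\le1$, they give $\|(\bI-\bU_{\mF})\bU_{\mG}\|_{op}\le\|\bU_{\mG}-\bU_{\mF}\|_{op}$ and $\|\bU_{\mG}(\bI-\bU_{\mF})\|_{op}\le\|\bU_{\mG}-\bU_{\mF}\|_{op}$, and Equation~\eqref{eq:subsp.dist.angle} rewrites each right-hand side as the sine of a principal angle.

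Concretely, fix a unit $\bu\in\wh{\mE}$ and put $\bw=\bU_{\wt{\mE}}\bu$. From $(\bI-\bU_{\wt{\mE}})\bu=(\bI-\bU_{\wt{\mE}})\bU_{\wh{\mE}}\bu$ and the bound above, $\|(\bI-\bU_{\wt{\mE}})\bu\|\le\sin\alpha$, so by Pythagoras $\|\bw\|\ge\cos\alpha$ and $\|\bu-\bw\|\le\sin\alpha$. Since $\bw\in\wt{\mE}$, the same argument applied to $(\bI-\bU_{\mE})\bU_{\wt{\mE}}$ yields $\|(\bI-\bU_{\mE})\bw\|\le\sin\beta\,\|\bw\|$, hence $\|\bU_{\mE}\bw\|\ge\cos\beta\,\|\bw\|\ge\cos\alpha\cos\beta$; and since $\bu-\bw=(\bI-\bU_{\wt{\mE}})\bu\in\wt{\mE}^{\bot}$, applying $\|\bU_{\mE}(\bI-\bU_{\wt{\mE}})\|_{op}\le\sin\beta$ to it gives $\|\bU_{\mE}(\bu-\bw)\|\le\sin\alpha\sin\beta$. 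The reverse triangle inequality then gives $\|\bU_{\mE}\bu\|\ge\|\bU_{\mE}\bw\|-\|\bU_{\mE}(\bu-\bw)\|\ge\cos\alpha\cos\beta-\sin\alpha\sin\beta=\cos(\alpha+\beta)$, which is nonnegative because $\alpha+\beta\le\pi/2$. Taking the supremum over such $\bu$ turns this into $\|(\bI-\bU_{\mE})\bU_{\wh{\mE}}\|_{op}\le\sin(\alpha+\beta)$; since compatibility makes this operator norm equal to $\sin\gamma$ (again via Equation~\eqref{eq:subsp.dist.angle} and the singular-value description of the angles), we get $\sin\gamma\le\sin(\alpha+\beta)$, and monotonicity of $\sin$ on $[0,\pi/2]$ --- applicable since $\gamma\le\pi/2$ by (i) and $\alpha+\beta\le\pi/2$ by hypothesis --- yields $\gamma\le\alpha+\beta$. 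I expect the only delicate point to be this last identification, namely that for compatible subspaces the one-sided norms $\|(\bI-\bU_{\cdot})\bU_{\cdot}\|_{op}$ coincide with the symmetric gaps $\|\bU_{\cdot}-\bU_{\cdot}\|_{op}$; this is exactly where the equal-dimension hypothesis is used, and everything else is routine.
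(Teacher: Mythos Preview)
The paper does not actually give a proof of this lemma; it simply states that the properties follow from Section~8.6 of \cite{berger1987geo} and moves on. Your proposal is therefore strictly more informative than what the paper provides, and the argument you outline is correct.

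One small remark on item~(iii): you can sidestep the ``delicate point'' you flag at the end. Having established $\|\bU_{\mE}\bu\|\ge\cos(\alpha+\beta)$ for every unit $\bu\in\wh{\mE}$, note that $\|\bU_{\mE}\bu\|=\|\bE^{\top}\bu\|$ for any orthonormal basis $\bE$ of $\mE$, so the smallest singular value of $\bE^{\top}\wh{\bE}$ is at least $\cos(\alpha+\beta)$. By the paper's definition of the angles, this smallest singular value is exactly $\cos\gamma$, whence $\cos\gamma\ge\cos(\alpha+\beta)$ and $\gamma\le\alpha+\beta$ by monotonicity of $\cos$ on $[0,\pi/2]$. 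This avoids the (true, but unproved in your sketch) identification $\|(\bI-\bU_{\mE})\bU_{\wh{\mE}}\|_{op}=\|\bU_{\wh{\mE}}-\bU_{\mE}\|_{op}$ for equal-dimensional subspaces.
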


\subsection{Reduced Least-Squares Problems} \label{sec:rls:ls}
For any integers $d,p\geq1$, any matrix $\bA\in\R^{d\times p}$, any vector $\bb\in\R^d$ and any linear subspace $\mC\subseteq\R^p$, we denote $\ls(\bA,\bb,\mC) = \argmin_{\bzeta\in\mC}\|\bA\bzeta-\bb\|_{2}^2$ the set of solutions to the reduced least-squares problem. It is a classical result, see Theorem~4 by \cite{price1964mat}, that $\ls(\bA,\bb,\R^p) = \{\bA^\dagger\bb + (\bI_{p}-\bA^\dagger\bA)\bzeta : \bzeta\in\R^p\}$ and the minimum-$L^2$-norm solution $\bzeta_{\ls} = \bA^\dagger\bb\in\R^p$ belongs to the range $\mR(\bA^\dagger)\subseteq\R^p$. This means that restricting the least-squares problem to the range of its inverse operator always admits unique solution, that is, $\ls(\bA,\bb) = \ls(\bA,\bb,\mR(\bA^\dagger)) = \{\bA^\dagger\bb\}$. Furthermore, one can always replace the vector $\bb$ with its projection $\bA\bA^\dagger\bb$ onto the range $\mR(\bA^\dagger)$, since $\ls(\bA,\bA\bA^\dagger\bb) = \{\bA^\dagger\bb\}$ and $\bA\bA^\dagger\bb=\bb$ if and only if $\bb\in\mR(\bA^\dagger)$. Lemma~1 by \cite{penrose1955gen} provides the equivalent formulation $\bA^\dagger = (\bA^\top\bA)^\dagger\bA^\top$, so that $\bzeta_{\ls}=(\bA^\top\bA)^\dagger\bA^\top\bb$ is the unique solution of the equivalent least-squares problem $\ls(\bA^\top\bA,\bA^\top\bb)$ restricted to $\mR((\bA^\top\bA)^\dagger)=\mR(\bA^\top\bA)$ since $\bA^\top\bA\in\R^{p\times p}$ is symmetric and positive-semidefinite, see Theorem~20.5.1 by \cite{harville1997mat}.   

\begin{lemma}\label{lem:ls.C}
	For any integers $d,p\geq1$, any matrix $\bA\in\R^{d\times p}$, any vector $\bb\in\R^d$ and any linear subspace $\mC\subseteq\mR(\bA^\dagger)$, the reduced least-squares problem $\ls(\bA,\bb,\mC)$ admits unique solution $\bzeta_{\mC}=\bU_{\mC}\bzeta_{\ls}$ where $\bU_{\mC}\in\R^{p\times p}$ is the orthogonal projection of $\R^p$ onto $\mC$ and $\bzeta_{\ls}=\bA^\dagger\bb$ the minimum-$L^2$-norm solution of the unreduced least-squares problem. The reduced least-squares problem is equivalent to $\ls(\bU_{\mC}\bA^\top\bA\bU_{\mC},\bU_{\mC}\bA^\top\bb)$ and $\bzeta_{\mC}=(\bU_{\mC}\bA^\top\bA\bU_{\mC})^\dagger\bU_{\mC}\bA^\top\bb$.
\end{lemma}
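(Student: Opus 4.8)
The plan is to treat separately the three assertions --- unique solvability of $\ls(\bA,\bb,\mC)$, the closed form $\bzeta_{\mC}=(\bU_{\mC}\bA^\top\bA\bU_{\mC})^\dagger\bU_{\mC}\bA^\top\bb$ together with the equivalence to $\ls(\bU_{\mC}\bA^\top\bA\bU_{\mC},\bU_{\mC}\bA^\top\bb)$, and the identity $\bzeta_{\mC}=\bU_{\mC}\bzeta_{\ls}$ --- proving each by elementary projection arguments. First I would record, from the Moore--Penrose relations recalled just above the statement ($\bA^\dagger=(\bA^\top\bA)^\dagger\bA^\top$, hence $\bA^\top\bA\bA^\dagger=\bA^\top$), that $\mR(\bA^\dagger)=\mR(\bA^\top)=\mR(\bA^\top\bA)=(\ker\bA)^\perp$. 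Therefore $\mC\subseteq\mR(\bA^\dagger)$ forces $\mC\cap\ker\bA=\{\bzero_p\}$, so $\bA$ is injective on $\mC$ and $\bzeta\mapsto\|\bA\bzeta-\bb\|_2^2$ is a strictly convex quadratic on the finite-dimensional subspace $\mC$, which has a unique minimizer $\bzeta_{\mC}\in\mC$. I would also note the unreduced normal equation $\bA^\top\bA\bzeta_{\ls}=\bA^\top\bb$, immediate from $\bA^\top\bA\bA^\dagger=\bA^\top$.

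Next I would characterise $\bzeta_{\mC}$ by first-order optimality: $\bzeta_{\mC}$ minimises over $\mC$ iff the gradient $2\bA^\top(\bA\bzeta-\bb)$ is orthogonal to $\mC$ at $\bzeta_{\mC}$, i.e. $\bU_{\mC}\bA^\top(\bA\bzeta_{\mC}-\bb)=\bzero_p$; since $\bU_{\mC}\bzeta_{\mC}=\bzeta_{\mC}$ this is the reduced normal equation $\bM\bzeta_{\mC}=\bc$ with $\bM=\bU_{\mC}\bA^\top\bA\bU_{\mC}\in\R_{\succeq0}^{p\times p}$ and $\bc=\bU_{\mC}\bA^\top\bb\in\mC$. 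I would then compute $\ker\bM$: $\bM\bzeta=\bzero_p$ iff $\bzeta^\top\bM\bzeta=\|\bA\bU_{\mC}\bzeta\|_2^2=0$ iff $\bU_{\mC}\bzeta\in\ker\bA\cap\mC=\{\bzero_p\}$ iff $\bzeta\in\mC^\perp$, so $\ker\bM=\mC^\perp$ and, $\bM$ being symmetric, $\mR(\bM)=\mC$. Hence $\bc\in\mC=\mR(\bM)$, $\bM$ restricts to a self-adjoint bijection of $\mC$, $\bM^\dagger$ inverts it on $\mC$ and vanishes on $\mC^\perp$, so $\bM^\dagger\bc\in\mC$ is the unique element of $\mC$ solving $\bM\bzeta=\bc$; with first-order optimality this gives $\bzeta_{\mC}=(\bU_{\mC}\bA^\top\bA\bU_{\mC})^\dagger\bU_{\mC}\bA^\top\bb$. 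The same bookkeeping yields the equivalence: $\bc\in\mR(\bM)$ makes $\ls(\bM,\bc)$ have minimal value zero with solution set $\bM^\dagger\bc+\ker\bM=\bM^\dagger\bc+\mC^\perp$, whose intersection with $\mR(\bM^\dagger)=\mC$ is the single point $\bM^\dagger\bc=\bzeta_{\mC}$, mirroring how the preamble reduces $\ls(\bA,\bb)$ to its value on $\mR(\bA^\dagger)$.

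Finally, for $\bzeta_{\mC}=\bU_{\mC}\bzeta_{\ls}$ --- which I expect to be the delicate point --- I would project the unreduced normal equation to get $\bU_{\mC}\bA^\top\bA\bzeta_{\ls}=\bc$, and then, provided $\bU_{\mC}$ commutes with $\bA^\top\bA$, compute $\bM(\bU_{\mC}\bzeta_{\ls})=\bU_{\mC}\bA^\top\bA\bU_{\mC}\bzeta_{\ls}=\bU_{\mC}\bA^\top\bA\bzeta_{\ls}=\bc$ (the middle equality using $\bA^\top\bA\bU_{\mC}=\bU_{\mC}\bA^\top\bA$ and $\bU_{\mC}^2=\bU_{\mC}$); since $\bU_{\mC}\bzeta_{\ls}\in\mC$, uniqueness forces $\bzeta_{\mC}=\bU_{\mC}\bzeta_{\ls}$. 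The obstacle is exactly this commutation: it holds whenever $\mC$ is invariant under $\bA^\top\bA$ --- in particular when $\mC$ is a span of eigenspaces of $\bA^\top\bA$, which is the relevant case for the $\bU_{\mC}\bzeta_{\ls}$ form in the paper's applications (the leading eigenspaces of $\bSigma$, $\bSigma_y$ in Section~\ref{sec:iden} and for PCR) --- whereas for a general $\mC\subseteq\mR(\bA^\dagger)$ (e.g. Krylov subspaces) $\bU_{\mC}\bzeta_{\ls}$ need not satisfy the reduced normal equation and only the $(\bU_{\mC}\bA^\top\bA\bU_{\mC})^\dagger\bU_{\mC}\bA^\top\bb$ form survives; accordingly I would establish the $\bU_{\mC}\bzeta_{\ls}$ identity under that invariance and the other two assertions for every admissible $\mC$.
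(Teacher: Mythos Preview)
Your analysis is correct and in fact more careful than the paper's own argument. Your proofs of uniqueness and of the closed form $\bzeta_{\mC}=(\bU_{\mC}\bA^\top\bA\bU_{\mC})^\dagger\bU_{\mC}\bA^\top\bb$, via first-order optimality on $\mC$ and the kernel identification $\ker(\bU_{\mC}\bA^\top\bA\bU_{\mC})=\mC^\perp$, are sound. The paper instead asserts the pseudoinverse identity $(\bA\bU_{\mC})^\dagger=\bU_{\mC}\bA^\dagger$ and derives all three claims from it in one stroke.

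You are right to isolate $\bzeta_{\mC}=\bU_{\mC}\bzeta_{\ls}$ as the delicate point: it is \emph{false} for general $\mC\subseteq\mR(\bA^\dagger)$. With $\bA=\diag(1,2)$, $\bb=(1,0)^\top$ and $\mC=\spa\{(1,1)^\top\}$, one has $\bU_{\mC}\bzeta_{\ls}=(1/2,1/2)^\top$ but the constrained minimizer is $(1/5,1/5)^\top$. The paper's derivation of $(\bA\bU_{\mC})^\dagger=\bU_{\mC}\bA^\dagger$ checks only the first Moore--Penrose axiom $\bA\bU_{\mC}(\bU_{\mC}\bA^\dagger)\bA\bU_{\mC}=\bA\bU_{\mC}$; the required symmetry of $(\bA\bU_{\mC})(\bU_{\mC}\bA^\dagger)=\bA\bU_{\mC}\bA^\dagger$ fails in this example, so that identity does not hold in general. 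Your proposed remedy --- imposing $\bA^\top\bA$-invariance of $\mC$ --- is exactly the missing hypothesis and does cover the eigenspace reductions of Section~\ref{sec:iden} and PCR. Be aware that the same faulty identity propagates to the paper's Lemma~\ref{lem:reg.alg.def} (the representation $\bzeta_{\alg,s}=\bU_{\alg,s}\bzeta_{\ls}$) and thence to the early-stopping Corollaries~\ref{cor:x.y.reg.alg.pop} and~\ref{cor:x.y.reg.alg.sam}, which invoke it for Krylov subspaces where the invariance fails.
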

\begin{proof}[Proof of Lemma~\ref{lem:ls.C}]
	Recall that $\bA^\dagger\bA$ is the orthogonal projection of $\R^p$ onto $\mR(\bA^\dagger)$, therefore $\bU_{\mC}\bA^\dagger\bA=\bA^\dagger\bA\bU_{\mC}=\bU_{\mC}$, since $\mR(\bU_{\mC})\subseteq\mR(\bA^\dagger)$. This implies
	\begin{align*}
		\bA\bU_{\mC} (\bU_{\mC}\bA^\dagger) \bA\bU_{\mC} = \bA\bU_{\mC} \bU_{\mC}\bA^\dagger \bA\bU_{\mC} = \bA\bU_{\mC},
	\end{align*}
	so that $(\bA\bU_{\mC})^\dagger=\bU_{\mC}\bA^\dagger$. By definition, $\bU_{\mC}\bzeta=\bzeta$ if and only if $\bzeta\in\mR(\bU_{\mC})$ and $\bU_{\mC}\bzeta\in\mR(\bU_{\mC})$ for all $\bzeta\in\R^p$. Thus, we can write
	\begin{align*}
		\mR(\bU_{\mC}) = \{\bzeta\in\R^p : \bU_{\mC}\bzeta=\bzeta\} = \{\bU_{\mC}\bzeta : \bzeta\in\R^p\}.
	\end{align*}
	With the above, we can now infer
	\begin{align*}
		\ls(\bA,\bb,\mC)
		&= \argmin_{\bzeta\in\mR(\bU_{\mC})} \|\bA\bzeta-\bb\|_{2}^2 \\
		&= \argmin_{\bzeta\in\mR(\bU_{\mC})} \|\bA\bU_{\mC}\bzeta-\bb\|_{2}^2 \\ 
		&= \bU_{\mC} \cdot \argmin_{\bzeta\in\R^p} \|\bA\bU_{\mC}\bzeta-\bb\|_{2}^2 \\
		&= \bU_{\mC} \cdot \{(\bA\bU_{\mC})^\dagger\bb + (\bI_{p}-(\bA\bU_{\mC})^\dagger(\bA\bU_{\mC}))\bzeta : \bzeta\in\R^p\} \\
		&= \bU_{\mC} \cdot \{\bU_{\mC}\bA^\dagger\bb + (\bI_{p}-\bU_{\mC}\bA^\dagger\bA\bU_{\mC})\bzeta : \bzeta\in\R^p\} \\
		&= \bU_{\mC} \cdot \{\bU_{\mC}\bA^\dagger\bb + (\bI_{p}-\bU_{\mC})\bzeta : \bzeta\in\R^p\} \\
		&= \{\bU_{\mC}\bA^\dagger\bb\}.
	\end{align*}
	This shows that $\bzeta_{\mC}=\bU_{\mC}\bzeta_{\ls}$ with $\bzeta_{\ls}=\bA^\dagger\bb$. To conclude the proof, we notice that the equivalent formulation
	\begin{align*}
		\bzeta_{\mC} = (\bA\bU_{\mC})^\dagger\bb = (\bU_{\mC}\bA^\top\bA\bU_{\mC})^\dagger\bU_{\mC}\bA^\top\bb
	\end{align*}
	is the unique solution of the equivalent least-squares problem $\ls(\bU_{\mC}\bA^\top\bA\bU_{\mC},\bU_{\mC}\bA^\top\bb)$.
\end{proof}

\subsection{Reduction Algorithms} \label{sec:rls:alg}
\begin{definition}\label{def:reg.alg}
	Given $\bA\in\R_{\succeq0}^{p\times p}$ and $\bb\in\mR(\bA)$, let $\alg(\cdot)$ be any function that selects:
	\begin{enumerate}[label=(\roman*),itemsep=0.25em,topsep=0.25em]
		\item a non-decreasing sequence $\mC_{\alg,0}\subseteq\cdots\subseteq\mC_{\alg,p}\subseteq\R^p$ of linear subspaces having dimensions $r_{\alg,s}=\dim(\mC_{\alg,s}) \leq s$ for all $0\leq s\leq p$;  \label{def:reg.alg.subsp}
		\item the orthogonal projections $\bU_{\alg,s}\in\R^{p\times p}$ of $\R^p$ onto $\mC_{\alg,s}$, the projected matrices $\bA_{\alg,s}=\bU_{\alg,s}\bA\bU_{\alg,s}\in\R_{\succeq0}^{p\times p}$ and the projected vectors $\bb_{\alg,s}=\bU_{\alg,s}\bb\in\mR(\bA_{\alg,s})$; \label{def:reg.alg.proj}
		\item the minimum-$L^2$-norm solutions $\bzeta_{\alg,s} = \bA_{\alg,s}^\dagger\bb_{\alg,s}$ of the reduced least-squares problems $\ls(\bA_{\alg,s},\bb_{\alg,s})$; \label{def:reg.alg.proj.ls}
		\item when $s=p$, one recovers $\bzeta_{\alg,p}=\bzeta_{\ls}$ the solution of the unreduced least-squares problem $\ls(\bA,\bb)$. \label{def:reg.alg.ls}
	\end{enumerate}
	We call {\it reduction algorithm} the collection
	\begin{align}\label{eq:reg.alg}
		\alg(\bA,\bb) = \left\{\btheta_{\alg,s} = \Big(\mC_{\alg,s},\ \bU_{\alg,s},\ r_{\alg,s},\ \bzeta_{\alg,s} \Big) : 0\leq s\leq p \right\},
	\end{align}
	determined by the above choices.
\end{definition}

\begin{lemma}\label{lem:reg.alg.def}
	One can replace Definition~\ref{def:reg.alg}~\ref{def:reg.alg.proj.ls} with the following equivalent definitions:
	\begin{align*}
		\bzeta_{\alg,s} &= \bA_{\alg,s}^\dagger\bb_{\alg,s} \in \ls\big(\bA_{\alg,s},\bb_{\alg,s}\big), \\
		\bzeta_{\alg,s} &= (\bA^{\frac{1}{2}}\bU_{\alg,s})^\dagger\bA^{\frac{\dagger}{2}}\bb \in \ls\big(\bA^{\frac{1}{2}},\bA^{\frac{\dagger}{2}}\bb,\mR(\bU_{\alg,s})\big), \\
		\bzeta_{\alg,s} &= \bU_{\alg,s} \bzeta_{\ls} \in \bU_{\alg,s} \cdot \ls\big(\bA,\bb\big).
	\end{align*}
\end{lemma}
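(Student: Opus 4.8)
The three expressions are just three encodings of the single vector $\bU_{\alg,s}\bzeta_{\ls}$, and my plan is to derive all of them at once from Lemma~\ref{lem:ls.C}, applied not to the pair $(\bA,\bb)$ but to the pair $(\bA^{1/2},\bA^{\dagger/2}\bb)$, where $\bA^{1/2}$ denotes the symmetric positive-semidefinite square root of $\bA$. The first step is to collect the elementary identities I will use: $\bA=(\bA^{1/2})^\top\bA^{1/2}=\bA^{1/2}\bA^{1/2}$; $(\bA^{1/2})^\dagger=\bA^{\dagger/2}$, $\bA^{\dagger/2}\bA^{\dagger/2}=\bA^\dagger$, and $\mR(\bA^{1/2})=\mR(\bA^{\dagger/2})=\mR(\bA)$; the matrix $\bA^{1/2}\bA^{\dagger/2}$ is the orthogonal projection of $\R^p$ onto $\mR(\bA)$, so $\bA^{1/2}\bA^{\dagger/2}\bb=\bb$ because $\bb\in\mR(\bA)$; consequently the minimum-$L^2$-norm solution of the unreduced problem $\ls(\bA^{1/2},\bA^{\dagger/2}\bb)$ is $(\bA^{1/2})^\dagger\bA^{\dagger/2}\bb=\bA^\dagger\bb=\bzeta_{\ls}$.

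Next I would invoke Lemma~\ref{lem:ls.C} with matrix $\bA^{1/2}$, vector $\bA^{\dagger/2}\bb$, and subspace $\mC=\mR(\bU_{\alg,s})$, noting that $\mC\subseteq\mR((\bA^{1/2})^\dagger)=\mR(\bA)$ so its hypothesis is met. Lemma~\ref{lem:ls.C} then gives, in one stroke, that $\ls(\bA^{1/2},\bA^{\dagger/2}\bb,\mR(\bU_{\alg,s}))$ has the unique solution $\bU_{\alg,s}\bzeta_{\ls}$ --- the third displayed form --- and that this solution also equals $(\bU_{\alg,s}(\bA^{1/2})^\top\bA^{1/2}\bU_{\alg,s})^\dagger\,\bU_{\alg,s}(\bA^{1/2})^\top\bA^{\dagger/2}\bb$. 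Substituting $(\bA^{1/2})^\top\bA^{1/2}=\bA$ and $(\bA^{1/2})^\top\bA^{\dagger/2}\bb=\bA^{1/2}\bA^{\dagger/2}\bb=\bb$ collapses the latter to $(\bU_{\alg,s}\bA\bU_{\alg,s})^\dagger\bU_{\alg,s}\bb=\bA_{\alg,s}^\dagger\bb_{\alg,s}=\bzeta_{\alg,s}$, which is Definition~\ref{def:reg.alg}~\ref{def:reg.alg.proj.ls} --- the first displayed form, the minimum-norm element of $\ls(\bA_{\alg,s},\bb_{\alg,s})$ --- and simultaneously shows $\bzeta_{\alg,s}=\bU_{\alg,s}\bzeta_{\ls}$.

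For the second form I would use the identity $(\bA\bU_{\mC})^\dagger=\bU_{\mC}\bA^\dagger$, valid whenever $\mC\subseteq\mR(\bA^\dagger)$, which is proved inside Lemma~\ref{lem:ls.C}; with $\bA$ replaced by $\bA^{1/2}$ and $\mC=\mR(\bU_{\alg,s})$ this reads $(\bA^{1/2}\bU_{\alg,s})^\dagger=\bU_{\alg,s}(\bA^{1/2})^\dagger=\bU_{\alg,s}\bA^{\dagger/2}$, hence $(\bA^{1/2}\bU_{\alg,s})^\dagger\bA^{\dagger/2}\bb=\bU_{\alg,s}\bA^{\dagger/2}\bA^{\dagger/2}\bb=\bU_{\alg,s}\bA^\dagger\bb=\bU_{\alg,s}\bzeta_{\ls}=\bzeta_{\alg,s}$; membership in $\ls(\bA^{1/2},\bA^{\dagger/2}\bb,\mR(\bU_{\alg,s}))$ was already recorded in the previous step. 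This closes the chain of equalities among the three forms.

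There is no deep obstacle here; the work is bookkeeping with pseudo-inverses, where the trap to avoid is that $(\bM\bN)^\dagger\neq\bN^\dagger\bM^\dagger$ in general, so each such identity must be justified by the range/projection structure actually in force --- exactly as in Lemma~\ref{lem:ls.C}. The one hypothesis that genuinely carries weight is the containment $\mC_{\alg,s}\subseteq\mR(\bA)$: it is what makes the reduced problem compatible with the unreduced one, and in particular it is what validates the third identity $\bzeta_{\alg,s}=\bU_{\alg,s}\bzeta_{\ls}$; without it one still has $\bzeta_{\alg,s}=\bA_{\alg,s}^\dagger\bb_{\alg,s}=(\bA^{1/2}\bU_{\alg,s})^\dagger\bA^{\dagger/2}\bb$, but the projection of the minimum-norm solution may fail to solve the reduced problem. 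The degenerate cases $\bA=\bzero$ and $r_{\alg,s}=0$ (so $\bU_{\alg,s}=\bzero$) are immediate, since then all three expressions are $\bzero$.
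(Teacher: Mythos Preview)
Your proposal is correct and follows essentially the same route as the paper: both arguments reduce to applying Lemma~\ref{lem:ls.C} with the pair $(\bA^{1/2},\bA^{\dagger/2}\bb)$ and subspace $\mR(\bU_{\alg,s})$, together with the Penrose identity $(\bM^\top\bM)^\dagger\bM^\top=\bM^\dagger$ for $\bM=\bA^{1/2}\bU_{\alg,s}$. The only difference is the order in which the three forms are derived---the paper starts from the definition $\bA_{\alg,s}^\dagger\bb_{\alg,s}$ and works toward $\bU_{\alg,s}\bzeta_{\ls}$, while you start from Lemma~\ref{lem:ls.C}'s conclusion $\bU_{\alg,s}\bzeta_{\ls}$ and work back---but the content is identical. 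Your explicit flagging of the hypothesis $\mC_{\alg,s}\subseteq\mR(\bA)$, needed to invoke Lemma~\ref{lem:ls.C} for the third identity, is a point the paper's proof uses silently.
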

\begin{proof}[Proof of Lemma~\ref{lem:reg.alg.def}]
	A reduction algorithm $\alg(\bA,\bb)$ satisfies Definition~\ref{def:reg.alg}~\ref{def:reg.alg.proj.ls} if and only if, for all $1\leq s\leq p$, one has $\bzeta_{\alg,s} = \bA_{\alg,s}^\dagger\bb_{\alg,s} \in \ls(\bA_{\alg,s},\bb_{\alg,s})$. With the equivalent formulation of generalized inverse,
	\begin{align*}
		\bzeta_{\alg,s} = (\bU_{\alg,s}\bA\bU_{\alg,s})^\dagger\bU_{\alg,s}\bb = (\bU_{\alg,s}\bA^{\frac{1}{2}} \bA^{\frac{1}{2}}\bU_{\alg,s})^\dagger\bU_{\alg,s}\bA^{\frac{1}{2}}\bA^{\frac{\dagger}{2}}\bb = (\bA^{\frac{1}{2}}\bU_{\alg,s})^\dagger\bA^{\frac{\dagger}{2}}\bb
	\end{align*}
	is the unique solution of the equivalent least-squares problem
	\begin{align*}
		\ls(\bA^{\frac{1}{2}}\bU_{\alg,s},\bA^{\frac{\dagger}{2}}\bb,\mR(\bU_{\alg,s})) = \argmin_{\bzeta\in\mR(\bU_{\alg,s})} \|\bA^{\frac{1}{2}}\bU_{\alg,s}\bzeta-\bA^{\frac{\dagger}{2}}\bb\|_{2}^2 = \ls(\bA^{\frac{1}{2}},\bA^{\frac{\dagger}{2}}\bb,\mR(\bU_{\alg,s})).
	\end{align*}
	We now invoke Lemma~\ref{lem:ls.C} to infer that $\bzeta_{\alg,s} = \bU_{\alg,s} (\bA^{\frac{1}{2}})^\dagger\bA^{\frac{\dagger}{2}}\bb = \bU_{\alg,s} \bzeta_{\ls}$ is the unique solution of the equivalent least-squares problem $\bU_{\alg,s} \cdot \ls(\bA,\bb)$.
\end{proof}

\begin{remark}[Degrees-of-Freedom] \label{rem:reg.alg.dof}
	Notice that the parameter $\btheta_{\alg,s}$ defined in Equation~\eqref{eq:reg.alg} is redundant in the sense that it is uniquely determined by $\bA$, $\bb$ and $\mC_{\alg,s}$. This means that two parameters $\btheta_{\alg,s}$ and $\btheta_{\alg,s'}$ are identical if and only if they have the same dimension $r_{\alg,s}=r_{\alg,s'}$. 
	
	For any parameter $\btheta_{\alg} = (\mC_{\alg}, \bU_{\alg}, r_{\alg}, \bzeta_{\alg})$ computed from a reduction algorithm $\alg(\bA,\bb)$, we define its {\it degrees-of-freedom} to be the dimension of its subspace, namely $\dof(\btheta_{\alg}) = r_{\alg}$, whereas the set $\dof(\alg(\bA,\bb)) = \{\dof(\btheta_{\alg}) : \btheta_{\alg}\in\alg(\bA,\bb)\}$ contains all the degrees-of-freedom without repetition. By construction, for each $r \in \dof(\alg(\bA,\bb))$ there exists a unique parameter $\btheta_{\alg}^{(r)}\in\alg(\bA,\bb)$ such that $\dof(\btheta_{\alg}^{(r)})=r$.
	
	We say that a reduction algorithm $\alg(\bA,\bb)$ \textit{preserves the degrees-of-freedom} of the least-squares problem $\ls(\bA,\bb)$ if the parameter $\btheta_{\alg,p}$ consists of a linear subspace $\mC_{\alg,p}\subseteq\mR(\bA)$. This implies that the non-decreasing sequence of linear subspaces in Definition~\ref{def:reg.alg}~\ref{def:reg.alg.subsp} is contained as a whole in the range $\mR(\bA)$ and that no more than $\dof(\btheta_{\alg,p}) = r_{\alg,p} \leq \rank(\bA)$ are used.
\end{remark}

\begin{remark}[Examples of Reduction Algorithms] \label{rem:reg.alg.examples}
	In view of Remark~\ref{rem:reg.alg.dof}, in order for $\alg(\bA,\bb)$ to be a reduction algorithm, it is sufficient to provide a choice function $\alg(\cdot)$ that satisfies Condition~\ref{def:reg.alg.subsp} and Condition~\ref{def:reg.alg.ls} in Definition~\ref{def:reg.alg}.
	
	Principal Components Regression by \cite{hotelling1933anal} can be translated into a choice function $\pcr(\cdot)$ and reduction algorithm $\pcr(\bA,\bb)$ with the following properties. For all $0\leq s\leq p$, the choice of the linear subspaces $\mC_{\pcr,s}$ depends only on the matrix $\bA$ and not on the vector $\bb$. In particular, with $1\leq r_{\bA}=\rank(\bA)\leq p$ and $\lambda_{\bA,1}\geq\cdots\geq\lambda_{\bA,r_{\bA}}>0$ the sorted positive eigenvalues of $\bA$, $\mC_{\pcr,s} = \mV_{s}(\bA)$ is the span of eigenvectors $\{\bv_{1}(\bA),\ldots,\bv_{s}(\bA)\}$ of $\bA$ corresponding to the largest $s$ eigenvalues. When $s=p$, we find that $\mC_{\pcr,p} = \mV_{p}(\bA) = \mR(\bA)$ recovers exactly the range of $\bA$ so that $\bzeta_{\pcr,p}=\bzeta_{\ls}$.
	
	Partial Least Squares by \cite{Wold66non} can be translated into a choice function $\pls(\cdot)$ and reduction algorithm $\pls(\bA,\bb)$ with the following properties. For all $0\leq s\leq p$, the linear subspaces $\mC_{\pls,s} = \mK_{s}(\bA,\bb)$ are spanned by the first $s$ vectors of the Krylov basis $\{\bb,\bA\bb,\ldots,\bA^{s-1}\bb\}$ generated by $\bA$ and $\bb$. The subspace $\mC_{\pls,s}$ is one-dimensional when $\bb$ is an eigenvector of $\bA$. When $s=p$, we find that $\mC_{\pls,p} = \mK_{p}(\bA,\bb) \subseteq \mR(\bA)$ and check in Lemma~\ref{lem:ls.krylov} that $\bzeta_{\pls,p}=\bzeta_{\ls}$. 
	
	Only a variation of Best Subset Selection by \cite{beale1967} falls within our framework. Forward Subset Selection can be translated into a choice function $\fss(\cdot)$ and reduction algorithm $\fss(\bA,\bb)$ with the following properties. For all $0\leq s\leq p$, the linear subspaces $\mC_{\fss,s} = \mE_{J_{\fss,s}}(\bA,\bb)$ are spanned by the vectors of the canonical basis $\{\be_{j}:j\in J_{\fss,s}(\bA,\bb)\}$ corresponding to an active set $J_{\fss,s}(\bA,\bb)\subseteq J_{\ls}(\bA,\bb)$, where $J_{\ls}$ is the active set of $\bzeta_{\ls}$. The active sets are computed iteratively, in the sense that $j_{\fss,s+1}\in J_{\ls}\setminus J_{\fss,s}$.  When $s=p$, we find $\mC_{\fss,p} = \mE_{J_{\ls}}(\bA,\bb)$ and $\bzeta_{\fss,p}=\bzeta_{\ls}$. 
\end{remark}

\subsection{Stability of Regularization Algorithms} \label{sec:rls:alg.pert}
Let $\bA\in\R_{\succeq0}^{p\times p}$ and $\bb\in\mR(\bA)$ be given and consider any reduction algorithm $\alg(\bA,\bb)$ with parameters $\btheta_{\alg,s} = (\mC_{\alg,s},\ \bU_{\alg,s},\ r_{\alg,s},\ \bzeta_{\alg,s})$ for all $0\leq s\leq p$. 

We are interested in the stability of the algorithm with respect to arbitrary perturbations $\wt{\bA}\in\R_{\succeq0}^{p\times p}$ and $\wt{\bb}\in\mR(\wt{\bA})$ for which $\alg(\wt{\bA},\wt{\bb})$ is also a reduction algorithm with parameters $\wt{\btheta}_{\alg,s} = (\wt{\mC}_{\alg,s},\ \wt{\bU}_{\alg,s},\ \wt{r}_{\alg,s},\ \wt{\bzeta}_{\alg,s})$ for all $0\leq s\leq p$. We are only interested in perturbations preserving the degrees-of-freedom, in the sense that $\dof(\alg(\bA,\bb)) \subseteq \dof(\alg(\wt{\bA},\wt{\bb}))$. The latter inclusion means that for any parameter $\btheta_{\alg}\in\alg(\bA,\bb)$ there exists a compatible parameter $\wt{\btheta}_{\alg}\in\alg(\wt{\bA},\wt{\bb})$ such that $\dof(\btheta_{\alg})=\dof(\wt{\btheta}_{\alg})$. In particular, such parameter $\wt{\btheta}_{\alg}$ must be unique due to the fact that $\wt{r}_{\alg,s}=\wt{r}_{\alg,s'} \iff \wt{\btheta}_{\alg,s} = \wt{\btheta}_{\alg,s'}$. Formally, the set of all such perturbations is
\begin{align}\label{eq:A.b.alg.pert}
	\begin{split}
		\Delta_{\alg}(\bA,\bb) = \Big\{(\wt{\bA},\wt{\bb}) \in \R_{\succeq0}^{p\times p} \times \mR(\wt{\bA}) :\ & \dof(\alg(\bA,\bb)) \subseteq \dof(\alg(\wt{\bA},\wt{\bb})) \Big\}
	\end{split}
\end{align}
and the size of a perturbation $(\wt{\bA},\wt{\bb}) \in \Delta_{\alg}(\bA,\bb)$ is
\begin{align}\label{eq:A.b.alg.eps}
	\eps(\wt{\bA},\wt{\bb}) = \frac{\|\wt{\bA}-\bA\|_{op}}{\|\bA\|_{op}} \vee \frac{\|\wt{\bb}-\bb\|_{2}}{\|\bb\|_{2}}.
\end{align}
The perturbation set defined above is non-empty because the trivial perturbation $(\bA,\bb)$ belongs to $\Delta_{\alg}(\bA,\bb)$ and has size $\eps(\bA,\bb) = 0$. For each $(\wt{\bA},\wt{\bb}) \in \Delta_{\alg}(\bA,\bb)$ and each $r \in \dof(\alg(\bA,\bb))$, there exists a unique pair of compatible parameters 
\begin{align}\label{eq:alg.theta.r}
	\btheta_{\alg}^{(r)} = \big(\mC_{\alg}^{(r)},\ \bU_{\alg}^{(r)},\ r,\ \bzeta_{\alg}^{(r)} \big),\quad  
	\wt{\btheta}_{\alg}^{(r)} = \big(\wt{\mC}_{\alg}^{(r)},\ \wt{\bU}_{\alg}^{(r)},\ r,\ \wt{\bzeta}_{\alg}^{(r)} \big),
\end{align}
such that $\btheta_{\alg}^{(r)}\in\alg(\bA,\bb)$ and $\wt{\btheta}_{\alg}^{(r)}\in\alg(\wt{\bA},\wt{\bb})$. 

We quantify the effect of perturbations in terms of both the operator norm $\|\cdot\|_{op}$ and the principal-angle $\phi_{1}(\cdot,\cdot)$. We say that $\alg(\bA,\bb)$ is {\it stable} if, for all $r \in \dof(\alg(\bA,\bb))$,
\begin{align}\label{eq:A.b.alg.C.D}
	\begin{split}
		C_{\alg,r}(\bA,\bb) &= 1 \vee \sup_{\substack{(\wt{\bA},\wt{\bb}) \in \Delta_{\alg}(\bA,\bb)\\ \eps(\wt{\bA},\wt{\bb})>0}}\ \frac{\big\|\wt{\bU}_{\alg}^{(r)} - \bU_{\alg}^{(r)}\big\|_{op}}{\eps(\wt{\bA},\wt{\bb})} < +\infty, \\
		D_{\alg,r}(\bA,\bb) &= 1 \vee \sup_{\substack{(\wt{\bA},\wt{\bb}) \in \Delta_{\alg}(\bA,\bb)\\ \eps(\wt{\bA},\wt{\bb})>0}}\ \frac{ \phi_{1}\big(\wt{\mC}_{\alg}^{(r)}, \mC_{\alg}^{(r)}\big)}{\eps(\wt{\bA},\wt{\bb})} < +\infty.
	\end{split}
\end{align}
Notice that, using $t\leq \arcsin(t)\leq (\pi/2) t$ for all $0\leq t\leq 1$ and the definition of principal-angle $\phi_{1}(\cdot,\cdot)$ in Equation~\eqref{eq:subsp.dist.angle}, it follows immediately that $C_{\alg,r} \leq D_{\alg,r} \leq (\pi/2) C_{\alg,r}$. Therefore, an algorithm is stable as long as either one of the quantities in the above display is finite. For a stable algorithm we denote
\begin{align}\label{eq:A.b.alg.M}
	M_{\alg,r}(\bA,\bb) = 2 \cdot \kappa_{2}(\bA_{\alg}^{(r)}) \cdot \{4\ C_{\alg,r}(\bA,\bb) + 1\} \cdot \left\{\frac{\|\bA\|_{op}}{\|\bA_{\alg}^{(r)}\|_{op}} \vee \frac{\|\bb\|_{2}}{\|\bb_{\alg}^{(r)}\|_{2}}\right\},
\end{align}
for all $r \in \dof(\alg(\bA,\bb))$.

We recall here a classical result established by~\cite{wei1989}.
\begin{theorem}[Theorem~1.1 by~\cite{wei1989}]\label{thm:ls.pert}
	Let $\bzeta_{\ls} = \ls(\bA,\bb)$ be the minimum-$L^2$-norm solution of a least-squares problem with $\bA\in\R^{p\times p}$ some symmetric and positive semi-definite matrix and $\bb\in\mR(\bA)$ some vector. Let $\wt{\bzeta}_{\ls} = \ls(\wt{\bA},\wt{\bb})$ be the minimum-$L^2$-norm solution of a perturbed least-squares problem with $\wt{\bA}=\bA+\wt{\Delta\bA}\in\R^{p\times p}$ some symmetric and positive semi-definite matrix and $\wt{\bb}=\bb+\wt{\Delta\bb}\in\mR(\wt{\bA})$ some vector. Assume that $\rank(\wt{\bA})=\rank(\bA)$ and 
	\begin{align*}
		\frac{\|\wt{\Delta\bb}\|_{2}}{\|\bb\|_{2}} \leq \eps,\quad \frac{\|\wt{\Delta\bA}\|_{op}}{\|\bA\|_{op}} \leq \eps,\quad 0 \leq \eps \leq \frac{1}{2\cdot \kappa_{2}(\bA)}.
	\end{align*}
	Then,
	\begin{align*}
		\frac{\|\wt{\bzeta}_{\ls}-\bzeta_{\ls}\|_{2}}{\|\bzeta_{\ls}\|_{2}} &\leq 5 \cdot \kappa_{2}(\bA) \cdot \eps.
	\end{align*}
\end{theorem}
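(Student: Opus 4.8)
Since Theorem~\ref{thm:ls.pert} is the classical pseudoinverse-perturbation bound of~\cite{wei1989}, the plan is to reconstruct a direct proof by explicitly decomposing the error vector $\wt{\bzeta}_{\ls}-\bzeta_{\ls}$ into three pieces and controlling each one, using crucially that the hypotheses make the perturbation \emph{acute}: $\rank(\wt\bA)=\rank(\bA)=:k$, $\bb\in\mR(\bA)$ and $\wt\bb\in\mR(\wt\bA)$. First I would record the symmetric positive semi-definite structure: $\bA^\dagger$ is symmetric, $\bP:=\bA\bA^\dagger=\bA^\dagger\bA$ is the orthogonal projection onto $\mR(\bA)$, $\|\bA\|_{op}=\lambda_1(\bA)$, $\|\bA^\dagger\|_{op}=1/\lambda_k(\bA)$, hence $\kappa_2(\bA)=\lambda_1(\bA)/\lambda_k(\bA)$, and likewise $\wt\bP:=\wt\bA\wt\bA^\dagger=\wt\bA^\dagger\wt\bA$ projects onto $\mR(\wt\bA)$. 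Since $\bb\in\mR(\bA)$ we have $\bzeta_{\ls}=\bA^\dagger\bb\in\mR(\bA)$, so $\bP\bzeta_{\ls}=\bzeta_{\ls}$ and $\bA\bzeta_{\ls}=\bb$, and similarly $\wt{\bzeta}_{\ls}=\wt\bA^\dagger\wt\bb\in\mR(\wt\bA)$.

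Next I would prove two auxiliary estimates. (i) By Weyl's inequality, $\lambda_k(\wt\bA)\geq\lambda_k(\bA)-\|\wt{\Delta\bA}\|_{op}\geq\|\bA\|_{op}/\kappa_2(\bA)-\eps\|\bA\|_{op}\geq\|\bA\|_{op}/(2\kappa_2(\bA))$, using $\eps\leq 1/(2\kappa_2(\bA))$; since $\rank(\wt\bA)=k$ this is the smallest nonzero eigenvalue of $\wt\bA$, so $\|\wt\bA^\dagger\|_{op}\leq 2\kappa_2(\bA)/\|\bA\|_{op}$. (ii) From $(\bI-\wt\bP)\wt\bA=\bzero$ we get $(\bI-\wt\bP)\bA=-(\bI-\wt\bP)\wt{\Delta\bA}$; writing any $\bv\in\mR(\bA)$ as $\bv=\bA(\bA^\dagger\bv)$ then yields $\|(\bI-\wt\bP)\bv\|_2\leq\|\wt{\Delta\bA}\|_{op}\|\bA^\dagger\|_{op}\|\bv\|_2\leq\eps\,\kappa_2(\bA)\|\bv\|_2$, that is, $\|(\bI-\wt\bP)\bP\|_{op}\leq\eps\,\kappa_2(\bA)$. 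Estimate (ii) is the elementary acute-case substitute for the Davis--Kahan $\sin\Theta$ bound on the gap between $\mR(\bA)$ and $\mR(\wt\bA)$.

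The heart of the argument is then the algebraic identity obtained from $\bb=\bA\bzeta_{\ls}$, $\wt\bb=\bb+\wt{\Delta\bb}$, $\bA=\wt\bA-\wt{\Delta\bA}$, $\wt\bA^\dagger\wt\bA=\wt\bP$ and $\bP\bzeta_{\ls}=\bzeta_{\ls}$:
\[
  \wt{\bzeta}_{\ls}-\bzeta_{\ls}=\wt\bA^\dagger\wt{\Delta\bb}-(\bI-\wt\bP)\bzeta_{\ls}-\wt\bA^\dagger\wt{\Delta\bA}\,\bzeta_{\ls}.
\]
Applying the triangle inequality together with $\|\wt{\Delta\bb}\|_2\leq\eps\|\bb\|_2\leq\eps\|\bA\|_{op}\|\bzeta_{\ls}\|_2$, $\|\wt{\Delta\bA}\|_{op}\leq\eps\|\bA\|_{op}$, and the two estimates above, the three summands are bounded by $2\kappa_2(\bA)\eps\|\bzeta_{\ls}\|_2$, $\kappa_2(\bA)\eps\|\bzeta_{\ls}\|_2$, and $2\kappa_2(\bA)\eps\|\bzeta_{\ls}\|_2$ respectively, which add up to exactly $5\,\kappa_2(\bA)\eps\|\bzeta_{\ls}\|_2$; dividing by $\|\bzeta_{\ls}\|_2$ (nonzero since $\bb\neq\bzero$) gives the claim.

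I expect the main obstacle to be estimate (ii) and, correspondingly, the careful use of rank preservation in (i): if $\rank(\wt\bA)>\rank(\bA)$ were allowed, the perturbed operator could acquire arbitrarily small nonzero eigenvalues, so that $\|\wt\bA^\dagger\|_{op}$ and the subspace $\mR(\wt\bA)$ would vary discontinuously — this is precisely the mechanism that, in the general non-acute case, forces the weaker rate $\kappa_2(\bA)^2$. A secondary point is bookkeeping of constants: landing on the sharp value $5$ rather than something larger relies on using the Weyl bound $\|\wt\bA^\dagger\|_{op}\leq 2\kappa_2(\bA)/\|\bA\|_{op}$ and $\|\bb\|_2\leq\|\bA\|_{op}\|\bzeta_{\ls}\|_2$ with no slack.
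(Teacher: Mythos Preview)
Your proof is correct. The paper does not actually prove Theorem~\ref{thm:ls.pert}; it only states it with attribution to~\cite{wei1989} and then invokes it inside the proof of Theorem~\ref{thm:alg.pert}. So there is no in-paper argument to compare against.

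Your self-contained argument is sound: the rank hypothesis together with Weyl's inequality gives $\|\wt\bA^\dagger\|_{op}\leq 2\kappa_2(\bA)/\|\bA\|_{op}$; the identity
\[
\wt{\bzeta}_{\ls}-\bzeta_{\ls}=\wt\bA^\dagger\wt{\Delta\bb}-(\bI-\wt\bP)\bzeta_{\ls}-\wt\bA^\dagger\wt{\Delta\bA}\,\bzeta_{\ls}
\]
is obtained exactly as you describe from $\bA\bzeta_{\ls}=\bb$ and $\wt\bA^\dagger\wt\bA=\wt\bP$; and the three pieces contribute $2\kappa_2(\bA)\eps$, $\kappa_2(\bA)\eps$, $2\kappa_2(\bA)\eps$ times $\|\bzeta_{\ls}\|_2$, summing to the claimed constant~$5$. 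The only implicit point worth flagging is that your estimate~(ii) uses $\bzeta_{\ls}\in\mR(\bA)$, which follows from $\bzeta_{\ls}=\bA^\dagger\bb$ and $\mR(\bA^\dagger)=\mR(\bA)$ for symmetric $\bA$; you state this but it is the one place a reader might pause.
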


The following theorem is our main contribution to the theory of reduced least-squares problems. 
\begin{theorem}\label{thm:alg.pert}
	Let $\alg(\bA,\bb)$ be a reduction algorithm in the sense of Definition~\ref{def:reg.alg}. Let $(\wt{\bA},\wt{\bb}) \in \Delta_{\alg}(\bA,\bb)$ be a perturbation of size $\eps = \eps(\wt{\bA},\wt{\bb})$. For all $r \in \dof(\alg(\bA,\bb))$, let $\btheta_{\alg}^{(r)}\in\alg(\bA,\bb)$ and $\wt{\btheta}_{\alg}^{(r)}\in\alg(\wt{\bA},\wt{\bb})$ be the compatible parameters in Equation~\eqref{eq:alg.theta.r}. If the reduction algorithm is stable with constants $C_{\alg,r}$, $D_{\alg,r}$ from Equation~\eqref{eq:A.b.alg.C.D}, then
	\begin{align*}
		\big\|\wt{\bU}_{\alg}^{(r)} - \bU_{\alg}^{(r)} \big\|_{op} \leq C_{\alg,r} \cdot \eps,\quad 
		\phi_{1}\big(\wt{\mC}_{\alg}^{(r)}, \mC_{\alg}^{(r)} \big) \leq D_{\alg,r} \cdot \eps.
	\end{align*}
	Furthermore, if $M_{\alg,r} \cdot \eps < 1$ with the constant from Equation~\eqref{eq:A.b.alg.M}, then
	\begin{align*}
		\frac{\big\|\wt{\bzeta}_{\alg}^{(r)}-\bzeta_{\alg}^{(r)} \big\|_{2}}{\big\|\bzeta_{\alg}^{(r)} \big\|_{2}} &\leq \frac{5}{2} \cdot M_{\alg,r} \cdot \eps.
	\end{align*}
\end{theorem}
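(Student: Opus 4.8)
The plan is to prove the two geometric bounds directly from the definition of stability, and to obtain the coefficient bound by recognising $\bzeta_{\alg}^{(r)}$ and $\wt\bzeta_{\alg}^{(r)}$ as minimum-$L^2$-norm solutions of an \emph{ordinary} least-squares problem and its perturbation, so that Theorem~\ref{thm:ls.pert} of \cite{wei1989} applies on the reduced level. Fix $r\in\dof(\alg(\bA,\bb))$ and let $\btheta_{\alg}^{(r)}$, $\wt\btheta_{\alg}^{(r)}$ be the unique compatible parameters of Equation~\eqref{eq:alg.theta.r}. If $\eps=\eps(\wt\bA,\wt\bb)>0$, then $(\wt\bA,\wt\bb)$ is an admissible competitor in the suprema defining $C_{\alg,r}$ and $D_{\alg,r}$ in Equation~\eqref{eq:A.b.alg.C.D}, whence $\|\wt\bU_{\alg}^{(r)}-\bU_{\alg}^{(r)}\|_{op}\le C_{\alg,r}\,\eps$ and $\phi_{1}(\wt\mC_{\alg}^{(r)},\mC_{\alg}^{(r)})\le D_{\alg,r}\,\eps$; if $\eps=0$ then $(\wt\bA,\wt\bb)=(\bA,\bb)$, the compatible parameter is unique, so $\wt\btheta_{\alg}^{(r)}=\btheta_{\alg}^{(r)}$ and both sides vanish. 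This settles the first two assertions.

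For the coefficient bound, recall from Lemma~\ref{lem:reg.alg.def} (equivalently Lemma~\ref{lem:ls.C}) that $\bzeta_{\alg}^{(r)}=(\bA_{\alg}^{(r)})^{\dagger}\bb_{\alg}^{(r)}$ is the minimum-$L^2$-norm solution of $\ls(\bA_{\alg}^{(r)},\bb_{\alg}^{(r)})$ with $\bA_{\alg}^{(r)}=\bU_{\alg}^{(r)}\bA\bU_{\alg}^{(r)}\in\R_{\succeq0}^{p\times p}$ and $\bb_{\alg}^{(r)}=\bU_{\alg}^{(r)}\bb\in\mR(\bA_{\alg}^{(r)})$, and likewise $\wt\bzeta_{\alg}^{(r)}=(\wt\bA_{\alg}^{(r)})^{\dagger}\wt\bb_{\alg}^{(r)}$ solves $\ls(\wt\bA_{\alg}^{(r)},\wt\bb_{\alg}^{(r)})$ with $\wt\bb_{\alg}^{(r)}\in\mR(\wt\bA_{\alg}^{(r)})$. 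I then propagate the perturbation size through the telescoping identities
\begin{align*}
\wt\bA_{\alg}^{(r)}-\bA_{\alg}^{(r)}&=\wt\bU_{\alg}^{(r)}(\wt\bA-\bA)\wt\bU_{\alg}^{(r)}+\wt\bU_{\alg}^{(r)}\bA(\wt\bU_{\alg}^{(r)}-\bU_{\alg}^{(r)})+(\wt\bU_{\alg}^{(r)}-\bU_{\alg}^{(r)})\bA\bU_{\alg}^{(r)},\\
\wt\bb_{\alg}^{(r)}-\bb_{\alg}^{(r)}&=\wt\bU_{\alg}^{(r)}(\wt\bb-\bb)+(\wt\bU_{\alg}^{(r)}-\bU_{\alg}^{(r)})\bb ,
\end{align*}
using the geometric bound, $\|\wt\bA-\bA\|_{op}\le\eps\|\bA\|_{op}$, $\|\wt\bb-\bb\|_{2}\le\eps\|\bb\|_{2}$ and $\|\bU_{\alg}^{(r)}\|_{op}=\|\wt\bU_{\alg}^{(r)}\|_{op}=1$, to get $\|\wt\bA_{\alg}^{(r)}-\bA_{\alg}^{(r)}\|_{op}\le(1+2C_{\alg,r})\eps\|\bA\|_{op}$ and $\|\wt\bb_{\alg}^{(r)}-\bb_{\alg}^{(r)}\|_{2}\le(1+C_{\alg,r})\eps\|\bb\|_{2}$, hence the relative size of the projected perturbation obeys
\[
\eps^{\circ}:=\frac{\|\wt\bA_{\alg}^{(r)}-\bA_{\alg}^{(r)}\|_{op}}{\|\bA_{\alg}^{(r)}\|_{op}}\vee\frac{\|\wt\bb_{\alg}^{(r)}-\bb_{\alg}^{(r)}\|_{2}}{\|\bb_{\alg}^{(r)}\|_{2}}\ \le\ (1+2C_{\alg,r})\,\eps\left(\frac{\|\bA\|_{op}}{\|\bA_{\alg}^{(r)}\|_{op}}\vee\frac{\|\bb\|_{2}}{\|\bb_{\alg}^{(r)}\|_{2}}\right).
\]
To apply Theorem~\ref{thm:ls.pert} to $(\bA_{\alg}^{(r)},\bb_{\alg}^{(r)})$ and its perturbation I must verify $\wt\bb_{\alg}^{(r)}\in\mR(\wt\bA_{\alg}^{(r)})$ (which holds because it arises from a reduction algorithm), the rank condition $\rank(\wt\bA_{\alg}^{(r)})=\rank(\bA_{\alg}^{(r)})$, and $\eps^{\circ}\le 1/(2\,\kappa_{2}(\bA_{\alg}^{(r)}))$. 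From $M_{\alg,r}\,\eps<1$ and the definition of $M_{\alg,r}$ in Equation~\eqref{eq:A.b.alg.M} one reads off $\kappa_{2}(\bA_{\alg}^{(r)})\,\eps^{\circ}\le\frac{1+2C_{\alg,r}}{2(4C_{\alg,r}+1)}M_{\alg,r}\,\eps<\tfrac12$, which is the norm requirement; the same estimate makes $(1+2C_{\alg,r})\eps\|\bA\|_{op}$ strictly smaller than the least positive eigenvalue $\|\bA_{\alg}^{(r)}\|_{op}/\kappa_{2}(\bA_{\alg}^{(r)})$ of $\bA_{\alg}^{(r)}$. Granting the rank condition, Theorem~\ref{thm:ls.pert} gives $\|\wt\bzeta_{\alg}^{(r)}-\bzeta_{\alg}^{(r)}\|_{2}/\|\bzeta_{\alg}^{(r)}\|_{2}\le 5\,\kappa_{2}(\bA_{\alg}^{(r)})\,\eps^{\circ}$; substituting the bound for $\eps^{\circ}$ and using $1+2C_{\alg,r}\le 4C_{\alg,r}+1$ collapses the right-hand side to exactly $\tfrac52\,M_{\alg,r}\,\eps$, as claimed.

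The one genuinely delicate point is the rank condition $\rank(\wt\bA_{\alg}^{(r)})=\rank(\bA_{\alg}^{(r)})$ demanded by Wei's theorem. Positive-semidefiniteness of $\wt\bA_{\alg}^{(r)}$ together with the perturbation being below the least positive eigenvalue of $\bA_{\alg}^{(r)}$ only keeps the rank from dropping; to rule out its increasing one must use that $\bA_{\alg}^{(r)}$ is nonsingular on $\mC_{\alg}^{(r)}$ — equivalently that the algorithm's subspaces lie in $\mR(\bA)$, as for PCR and PLS — so that $\bA_{\alg}^{(r)}$ and $\wt\bA_{\alg}^{(r)}$ both have full rank $r$ on their $r$-dimensional subspaces; otherwise a zero eigenvalue of $\bA_{\alg}^{(r)}$ could be perturbed into an arbitrarily small positive one and the bound would fail. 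Apart from this, the proof is routine: the telescoping, the operator-norm triangle inequalities, and the bookkeeping of the constants built into $M_{\alg,r}$.
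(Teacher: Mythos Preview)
Your proof is correct and follows the same route as the paper: the projection bounds are read off directly from the definition of $C_{\alg,r}$ and $D_{\alg,r}$, and the coefficient bound is obtained by showing that $(\wt\bA_{\alg}^{(r)},\wt\bb_{\alg}^{(r)})$ is a sufficiently small perturbation of $(\bA_{\alg}^{(r)},\bb_{\alg}^{(r)})$ so that Wei's Theorem~\ref{thm:ls.pert} applies at the reduced level.

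Two minor points of comparison. First, your telescoping for the matrix difference isolates $\bA$ rather than $\wt\bA$, so you avoid the extra factor $\|\wt\bA\|_{op}\le(1+\eps)\|\bA\|_{op}\le 2\|\bA\|_{op}$ that the paper picks up; you obtain $1+2C_{\alg,r}$ where the paper gets $4C_{\alg,r}+1$, and then discard the slack at the end via $1+2C_{\alg,r}\le 4C_{\alg,r}+1$ because $M_{\alg,r}$ already carries the larger constant. Second, the rank condition you flag as ``genuinely delicate'' is dispatched in the paper in one line: since the compatible parameters share the same degrees-of-freedom $r$, one has $\rank(\bA_{\alg}^{(r)})=\rank(\wt\bA_{\alg}^{(r)})=r$. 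This relies on the reduction subspaces lying inside $\mR(\bA)$ and $\mR(\wt\bA)$ respectively, which is implicit in the framework (it is needed for Lemma~\ref{lem:reg.alg.def} via Lemma~\ref{lem:ls.C}, and holds for all the examples). Your caution that the bound could fail if a subspace strays outside the range is well taken, but within the paper's setup the concern does not arise.
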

\begin{proof}[Proof of Theorem~\ref{thm:alg.pert}]
	Equation~\eqref{eq:A.b.alg.C.D} defines $C_{\alg,r}$ as the smallest constant $C$ such that $\|\wt{\bU}_{\alg}^{(r)} - \bU_{\alg}^{(r)}\|_{op} \leq C \eps(\wt{\bA},\wt{\bb})$ and $D_{\alg,r}$ as the smallest constant $D$ such that $ \phi_{1}(\wt{\mC}_{\alg}^{(r)}, \mC_{\alg}^{(r)}) \leq D \eps(\wt{\bA},\wt{\bb})$ for all perturbations $(\wt{\bA},\wt{\bb})\in\Delta_{\alg}(\bA,\bb)$. It remains to prove the perturbation bound on the least-squares solutions. By definition of orthogonal projection, we have $\bU_{\alg}^{(r)} = \bU_{\alg}^{(r)}{}^\top = \bU_{\alg}^{(r)}\bU_{\alg}^{(r)}{}^\top$ and the same is true for $\wt{\bU}_{\alg}^{(r)}$. By Definition~\ref{def:reg.alg},
	\begin{align*}
		\bzeta_{\alg}^{(r)} = \bA_{\alg}^{(r)}{}^\dagger\bb_{\alg}^{(r)} &\in \ls(\bA_{\alg}^{(r)},\bb_{\alg}^{(r)}),\quad 
		\wt{\bzeta}_{\alg}^{(r)} = \wt{\bA}_{\alg}^{(r)}{}^\dagger\wt{\bb}_{\alg}^{(r)} \in \ls(\wt{\bA}_{\alg}^{(r)},\wt{\bb}_{\alg}^{(r)}).
	\end{align*}
	We check that $(\wt{\bA}_{\alg}^{(r)},\wt{\bb}_{\alg}^{(r)})$ is a sufficiently small perturbation of $(\bA_{\alg}^{(r)},\bb_{\alg}^{(r)})$ and apply Theorem~\ref{thm:ls.pert} to the above display. Notice that $\rank(\bA_{\alg}^{(r)})=r=\rank(\wt{\bA}_{\alg}^{(r)})$ since the parameters $\btheta_{\alg}^{(r)}\in\alg(\bA,\bb)$ and $\wt{\btheta}_{\alg}^{(r)}\in\alg(\wt{\bA},\wt{\bb})$ have the same degrees-of-freedom. First, we bound
	\begin{align*}
		\frac{\|\wt{\bb}_{\alg}^{(r)}-\bb_{\alg}^{(r)}\|_{2}}{\|\bb_{\alg}^{(r)}\|_{2}} &= \frac{\|\wt{\bU}_{\alg}^{(r)}\wt{\bb}-\bU_{\alg}^{(r)}\bb\|_{2}}{\|\bb_{\alg}^{(r)}\|_{2}} \\
		&\leq \frac{\|(\wt{\bU}_{\alg}^{(r)}-\bU_{\alg}^{(r)})\wt{\bb}\|_{2}}{\|\bb_{\alg}^{(r)}\|_{2}} + \frac{\|\bU_{\alg}^{(r)}(\wt{\bb}-\bb)\|_{2}}{\|\bb_{\alg}^{(r)}\|_{2}} \\
		&\leq \|\wt{\bU}_{\alg}^{(r)}-\bU_{\alg}^{(r)}\|_{op} \cdot \frac{\|\bb\|_{2} + \|\bb\|_{2} \cdot \eps}{\|\bb_{\alg}^{(r)}\|_{2}} + \frac{\|\bb\|_{2} \cdot \eps}{\|\bb_{\alg}^{(r)}\|_{2}} \\
		&\leq C_{\alg,r} \cdot \eps \cdot \frac{\|\bb\|_{2} + \|\bb\|_{2} \cdot \eps}{\|\bb_{\alg}^{(r)}\|_{2}} + \frac{\|\bb\|_{2} \cdot \eps}{\|\bb_{\alg}^{(r)}\|_{2}} \\
		&\leq (2\ C_{\alg,r} + 1) \cdot \frac{\|\bb\|_{2}}{\|\bb_{\alg}^{(r)}\|_{2}} \cdot \eps.
	\end{align*}
	Second, we bound
	\begin{align*}
		\frac{\|\wt{\bA}_{\alg}^{(r)}-\bA_{\alg}^{(r)}\|_{op}}{\|\bA_{\alg}^{(r)}\|_{op}} &= \frac{\|\wt{\bU}_{\alg}^{(r)}\wt{\bA}\wt{\bU}_{\alg}^{(r)}-\bU_{\alg}^{(r)}\bA\bU_{\alg}^{(r)}\|_{op}}{\|\bA_{\alg}^{(r)}\|_{op}} \\
		&\leq \frac{\|(\wt{\bU}_{\alg}^{(r)}-\bU_{\alg}^{(r)})\wt{\bA}\wt{\bU}_{\alg}^{(r)}\|_{op}}{\|\bA_{\alg}^{(r)}\|_{op}} + \frac{\|\bU_{\alg}^{(r)}(\wt{\bA}\wt{\bU}_{\alg}^{(r)}-\bA\bU_{\alg}^{(r)})\|_{op}}{\|\bA_{\alg}^{(r)}\|_{op}} \\
		&\leq \frac{\|\wt{\bU}_{\alg}^{(r)}-\bU_{\alg}^{(r)}\|_{op} \cdot \|\wt{\bA}\|_{op}}{\|\bA_{\alg}^{(r)}\|_{op}} + \frac{\|\wt{\bA}(\wt{\bU}_{\alg}^{(r)}-\bU_{\alg}^{(r)})\|_{op}}{\|\bA_{\alg}^{(r)}\|_{op}} + \frac{\|(\wt{\bA}-\bA)\bU_{\alg}^{(r)}\|_{op}}{\|\bA_{\alg}^{(r)}\|_{op}} \\
		&\leq 2 \cdot \|\wt{\bU}_{\alg}^{(r)}-\bU_{\alg}^{(r)}\|_{op} \cdot \frac{\|\wt{\bA}\|_{op}}{\|\bA_{\alg}^{(r)}\|_{op}} + \frac{\|\wt{\bA}-\bA\|_{op}}{\|\bA_{\alg}^{(r)}\|_{op}} \\
		&\leq 2 \cdot C_{\alg,r} \cdot \eps \cdot \frac{\|\bA\|_{op} + \|\bA\|_{op} \cdot \eps}{\|\bA_{\alg}^{(r)}\|_{op}} + \frac{\|\bA\|_{op}}{\|\bA_{\alg}^{(r)}\|_{op}} \cdot \eps \\
		&\leq (4\ C_{\alg,r} + 1) \cdot \frac{\|\bA\|_{op}}{\|\bA_{\alg,s}\|_{op}} \cdot \eps.
	\end{align*}
	Putting the above displays together yields
	\begin{align*}
		\frac{\|\wt{\bA}_{\alg}^{(r)}-\bA_{\alg}^{(r)}\|_{op}}{\|\bA_{\alg}^{(r)}\|_{op}} \vee \frac{\|\wt{\bb}_{\alg}^{(r)}-\bb_{\alg}^{(r)}\|_{2}}{\|\bb_{\alg}^{(r)}\|_{2}} \leq \eps_{r} = (4\ C_{\alg,r} + 1) \left(\frac{\|\bA\|_{op}}{\|\bA_{\alg}^{(r)}\|_{op}} \vee \frac{\|\bb\|_{2}}{\|\bb_{\alg}^{(r)}\|_{2}}\right) \eps.
	\end{align*}
	By assumption, $2 \cdot \kappa_{2}(\bA_{\alg}^{(r)}) \cdot \eps_{r} = M_{\alg,r} \cdot \eps < 1$, thus the assumptions of Theorem~\ref{thm:ls.pert} are satisfied and one can bound
	\begin{align*}
		\frac{\|\wt{\bzeta}_{\alg}^{(r)}-\bzeta_{\alg}^{(r)}\|_{2}}{\|\bzeta_{\alg}^{(r)}\|_{2}} &\leq 5 \cdot \kappa_{2}(\bA_{\alg}^{(r)}) \cdot \eps_{r},
	\end{align*}
	which gives the claim.
\end{proof}

\begin{remark}[Examples of Stable Algorithms] \label{rem:A.b.alg.C.D}
	It is beyond the scope of this paper to provide a comprehensive study on the stability of classical regularization algorithms, but we provide below the main references in the literature.
	
	Following \cite{davis1970rot} and \cite{godunov1993gua} one finds that $\pcr(\bA,\bb)$ is a stable algorithm. In fact, consider an arbitrary matrix $\bA\in\R_{\succeq0}^{p\times p}$ with rank $1\leq r_{\bA}=\rank(\bA)\leq p$ and degree $1\leq d_{\bA}=\deg(\bA)\leq r_{\bA}$, so that the unique positive eigenvalues of $\bA$ are $\lambda_{\bA,1} > \cdots > \lambda_{\bA,d_{\bA}} > 0$. We denote $\bU_{\bA,i}\in\R^{p\times p}$ the orthogonal projection of $\R^p$ onto the $\lambda_{\bA,i}$-eigenspace of $\bA$. The dimension $m_{\bA,i}=\rank(\bU_{\bA,i})$ of each eigenspace is the multiplicity of the eigenvalue $\lambda_{\bA,i}$. We also denote $\delta_{\bA,i} = \min\{\lambda_{\bA,i-1}-\lambda_{\bA,i}, \lambda_{\bA,i}-\lambda_{\bA,i+1}\} > 0$ the minimum gap between $\lambda_{\bA,i}$ and the other eigenvalues. Now consider any perturbation $\wt{\bA}\in\R_{\succeq0}^{p\times p}$ such that $\|\wt{\bA}-\bA\|_{op} \leq \eps \|\bA\|_{op}$. Theorem~5.3 by \cite{godunov1993gua} shows that, for a corresponding orthogonal projections $\wt{\bU}_{\bA,i}\in\R^{p\times p}$ with same dimension $m_{\bA,i}=\rank(\wt{\bU}_{\bA,i})$, it holds $\|\wt{\bU}_{\bA,i}-\bU_{\bA,i}\|_{op} \lesssim \eps / \delta_{\bA,i}$. That is to say, the stability constant $C_{\pcr,m_{\bA,i}}$ in Equation~\eqref{eq:A.b.alg.C.D} is proportional to the inverse-eigengap $\delta_{\bA,i}^{-1} <+\infty$. 
	
	Following \cite{carpraux1994SotK} and \cite{kuznetsov1997Pbot} one finds that $\pls(\bA,\bb)$ is a stable algorithm. In fact, consider an arbitrary matrix $\bA\in\R_{\succeq0}^{p\times p}$ and vector $\bb\in\mR(\bA)$, together with perturbations $\wt{\bA}\in\R_{\succeq0}^{p\times p}$ and $\wt{\bb}\in\mR(\wt{\bA})$ such that $\|\wt{\bA}-\bA\|_{op} \leq \eps \|\bA\|_{op}$ and $\|\wt{\bb}-\bb\|_{2} \leq \eps \|\bb\|_{2}$. For any $1\leq s\leq r_{\bA}=\rank(\bA)$, consider $\bU_{s}\in\R^{p\times p}$ and $\wt{\bU}_{s}\in\R^{p\times p}$ to be respectively the orthogonal projections of $\R^p$ onto the Krylov spaces $\mK_{s}(\bA,\bb)$ and $\mK_{s}(\wt{\bA},\wt{\bb})$. Theorem~3.3 by~\cite{kuznetsov1997Pbot}, see our Theorem~\ref{thm:krylov.pert} and Appendix~\ref{app:aux:pert}, shows that $\|\wt{\bU}_{s}-\bU_{s}\|_{op} \lesssim \eps\ \kappa_{s}(\bA,\bb)$ and implies that the stability constant $C_{\pls,s}(\bA,\bb)$ in Equation~\eqref{eq:A.b.alg.C.D} is proportional to the condition number $\kappa_{s}(\bA,\bb) <+\infty$ of the Krylov space $\mK_{s}(\bA,\bb)$.

	Following \cite{cerone2019lin} and \cite{fosson2020spa} one finds that penalized variations of $\fss(\bA,\bb)$ are stable. In fact, consider an arbitrary matrix $\bA\in\R_{\succeq0}^{p\times p}$ and vector $\bb\in\mR(\bA)$, together with perturbations $\wt{\bA}\in\R_{\succeq0}^{p\times p}$ and $\wt{\bb}\in\mR(\wt{\bA})$ such that $\|\wt{\bA}-\bA\|_{op} \leq \eps \|\bA\|_{op}$ and $\|\wt{\bb}-\bb\|_{2} \leq \eps \|\bb\|_{2}$. For any $1\leq s\leq p$, let $J_{s}=J_{s}(\bA,\bb)$ and $\wt{J}_{s}= J_{s}(\wt{\bA},\wt{\bb})$ be respectively the best $s$-sparse active sets leading to orthogonal projections $\bU_{J_{s}}=\bI_{J_{s}}\in\R^{p\times p}$ and $\wt{\bU}_{s}=\bI_{\wt{J}_{s}}\in\R^{p\times p}$. Theorem~2 by \cite{fosson2020spa} guarantees exact recovery $\wt{J}_{s}=J_{s}$, so that $\|\wt{\bU}_{s}-\bU_{s}\|_{op}=0$ and the stability constant $C_{\fss,s}(\bA,\bb)$ in Equation~\eqref{eq:A.b.alg.C.D} is $1<+\infty$. 
\end{remark}

\section{Auxiliary Results}\label{app:aux}
Here we gather all the relevant auxiliary results and provide proofs when necessary.

\subsection{Random Vectors}\label{app:aux:rand}
\begin{lemma}\label{lem:x.y.cov}
    Let $\bx\in\R^p$ be a possibly degenerate random vector and $y\in\R$ a random variable, both centered and with finite second moments $\bSigma,\bsigma$. Then, $\bx\in\mR(\bSigma)$ almost surely and $\bsigma\in\mR(\bSigma)$.
\end{lemma}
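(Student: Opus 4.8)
The plan is to use the fact that, because $\bSigma$ is symmetric and positive semidefinite, $\mR(\bSigma) = \ker(\bSigma)^\perp$, and then to show that both $\bx$ (a.s.) and $\bsigma$ lie in $\ker(\bSigma)^\perp$. So everything reduces to understanding vectors $\bv$ in the null space of $\bSigma$.

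First I would treat $\bx$. Fix an arbitrary $\bv\in\ker(\bSigma)$. Then $\E\big[(\bv^\top\bx)^2\big] = \bv^\top\E(\bx\bx^\top)\bv = \bv^\top\bSigma\bv = 0$, so $\bv^\top\bx = 0$ almost surely. The one point requiring care is the quantifier: this gives, for each fixed $\bv$, an almost-sure statement, and I want an almost-sure statement that holds simultaneously for all $\bv\in\ker(\bSigma)$. Since $\ker(\bSigma)$ is a finite-dimensional subspace of $\R^p$, I would pick a basis $\bv_1,\dots,\bv_k$ of it; on the intersection of the $k$ corresponding null sets — still an event of probability one — we have $\bv_j^\top\bx = 0$ for every $j$, hence $\bx\perp\ker(\bSigma)$ there, i.e. $\bx\in\ker(\bSigma)^\perp = \mR(\bSigma)$ almost surely.

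For $\bsigma$, again fix $\bv\in\ker(\bSigma)$. By the previous step $(\bv^\top\bx)\,y = 0$ almost surely, and the random variable $\bx y$ is integrable (coordinatewise) by Cauchy--Schwarz, since $\E\big|(\bv^\top\bx)\,y\big| \le \big(\E(\bv^\top\bx)^2\big)^{1/2}\big(\E(y^2)\big)^{1/2} = 0$; hence $\bv^\top\bsigma = \bv^\top\E(\bx y) = \E\big[(\bv^\top\bx)\,y\big] = 0$. As $\bv\in\ker(\bSigma)$ was arbitrary, $\bsigma\perp\ker(\bSigma)$, i.e. $\bsigma\in\mR(\bSigma)$. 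The argument is essentially routine; the only mildly delicate point is the finite-dimensional quantifier swap in the first part, and there is no genuine obstacle beyond that.
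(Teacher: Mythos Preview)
Your proof is correct and follows essentially the same approach as the paper's: both show that the component of $\bx$ orthogonal to $\mR(\bSigma)$ has zero second moment and hence vanishes almost surely, then deduce $\bsigma\in\mR(\bSigma)$ from $\bx\in\mR(\bSigma)$ a.s. The only cosmetic difference is that the paper works directly with the orthogonal projection $\bU_{\bx^\bot}$ onto $\mR(\bSigma)^\bot$ and computes $\E(\bU_{\bx^\bot}\bx\bx^\top\bU_{\bx^\bot})=\bU_{\bx^\bot}\bSigma\bU_{\bx^\bot}=\bzero_{p\times p}$, thereby handling your quantifier swap in one stroke, whereas you argue coordinatewise via a basis of $\ker(\bSigma)$; the content is the same.
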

\begin{proof}[Proof of Lemma~\ref{lem:x.y.cov}]
For the first statement, let $\bU_{\bx}\in\R^{p\times p}$ be the orthogonal projection onto $\mR(\bSigma)$, so that $\bU_{\bx}\bSigma = \bSigma = \bSigma\bU_{\bx}$ and $\mR(\bSigma)=\mR(\bU_{\bx})$. Then, with $\bU_{\bx^\bot}=\bI_{p}-\bU_{\bx}$ and the random vector $\be_{\bx} = \bU_{\bx^\bot}\bx \in\mR(\bSigma)^\bot$, we prove that $\P(\be_{\bx}=\bzero_{p})=1$. For this, compute the covariance matrix
\begin{align*}
    \bSigma_{\be_{\bx}} = \E(\be_{\bx}\be_{\bx}^\top) = \E(\bU_{\bx^\bot}\bx\bx^\top\bU_{\bx^\bot}) = \bU_{\bx^\bot} \bSigma \bU_{\bx^\bot} = \bzero_{p\times p},
\end{align*}
thus the vector $\be_{\bx}$ is almost surely equal to its expectation $\E(\be_{\bx}) = \E(\bU_{\bx^\bot}\bx) = \bzero_{p}$.

The second statement is a consequence of the first, since $\bx=\bU_{\bx}\bx$ almost surely implies
\begin{align*}
    \bsigma = \E(\bx y) = \E(\bU_{\bx}\bx y) = \bU_{\bx}\bsigma \in \mR(\bSigma).
\end{align*}
\end{proof}

\begin{lemma}\label{lem:x.y.ls}
    Let $(\bx,y)\in\R^p\times\R$ be a centered random pair for which the squared-loss $\ell_{\bx,y}(\bbeta)=\E(y-\bx^\top\bbeta)^2$ is well-defined for all $\bbeta\in\R^p$. The set of least-squares solutions $\ls(\bx,y,\R^p) = \argmin_{\bbeta\in\R^p} \ell_{\bx,y}(\bbeta)$ is $\{\bbeta\in\R^p:\bSigma\bbeta=\bsigma\}$ and the minimum-$L^2$-norm solution is $\bbeta_{\ls} = \bSigma^\dagger\bsigma$.
\end{lemma}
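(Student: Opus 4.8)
The plan is to reduce the population least-squares problem to the finite-dimensional reduced least-squares framework already set up in Section~\ref{sec:rls:ls}, via a square-root substitution, and then read off both claims from the classical characterization recalled there. First I would expand the squared loss into the quadratic form
\begin{align*}
	\ell_{\bx,y}(\bbeta) = \E(y^2) - 2\bbeta^\top\bsigma + \bbeta^\top\bSigma\bbeta ,
\end{align*}
which is finite for every $\bbeta\in\R^p$ by hypothesis (so in particular $\E(y^2)$, $\bSigma$, $\bsigma$ are all finite). By Lemma~\ref{lem:x.y.cov} we have $\bsigma\in\mR(\bSigma)=\mR(\bSigma^{\frac{1}{2}})$, hence the vector $\bv = \bSigma^{\frac{\dagger}{2}}\bsigma$ satisfies $\bSigma^{\frac{1}{2}}\bv = \bsigma$. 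Using $\bSigma = \bSigma^{\frac{1}{2}}\bSigma^{\frac{1}{2}}$ and $\bbeta^\top\bSigma^{\frac{1}{2}}\bv = \bbeta^\top\bsigma$, a direct expansion gives the identity
\begin{align*}
	\ell_{\bx,y}(\bbeta) = \big\|\bSigma^{\frac{1}{2}}\bbeta - \bv\big\|_{2}^2 + \big(\E(y^2) - \|\bv\|_{2}^2\big),
\end{align*}
so minimizing $\ell_{\bx,y}$ over $\R^p$ is exactly the least-squares problem $\ls(\bSigma^{\frac{1}{2}},\bv,\R^p)$ up to an additive constant independent of $\bbeta$.

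Next I would invoke the classical result recalled in Section~\ref{sec:rls:ls} (Theorem~4 by \cite{price1964mat}): the solution set of $\ls(\bSigma^{\frac{1}{2}},\bv,\R^p)$ equals $\{(\bSigma^{\frac{1}{2}})^\dagger\bv + (\bI_{p} - (\bSigma^{\frac{1}{2}})^\dagger\bSigma^{\frac{1}{2}})\bzeta : \bzeta\in\R^p\}$ and its minimum-$L^2$-norm element is $(\bSigma^{\frac{1}{2}})^\dagger\bv$. It then remains to simplify these expressions with the standard generalized-inverse identities for symmetric positive-semidefinite matrices, namely $(\bSigma^{\frac{1}{2}})^\dagger = \bSigma^{\frac{\dagger}{2}}$ and $\bSigma^{\frac{\dagger}{2}}\bSigma^{\frac{\dagger}{2}} = \bSigma^\dagger$, which yield $(\bSigma^{\frac{1}{2}})^\dagger\bv = \bSigma^{\frac{\dagger}{2}}\bSigma^{\frac{\dagger}{2}}\bsigma = \bSigma^\dagger\bsigma$ and $(\bSigma^{\frac{1}{2}})^\dagger\bSigma^{\frac{1}{2}} = \bSigma^\dagger\bSigma$, the orthogonal projection of $\R^p$ onto $\mR(\bSigma)$. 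Consequently the solution set is the coset $\bSigma^\dagger\bsigma + \ker(\bSigma)$ and the minimum-norm solution is $\bbeta_{\ls}=\bSigma^\dagger\bsigma$.

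Finally I would identify this coset with the normal-equation solution set $\{\bbeta\in\R^p:\bSigma\bbeta=\bsigma\}$: since $\bsigma\in\mR(\bSigma)$ one has $\bSigma(\bSigma^\dagger\bsigma)=\bsigma$, so $\bSigma^\dagger\bsigma$ is a particular solution of the consistent linear system $\bSigma\bbeta=\bsigma$, and by linearity the full solution set is $\bSigma^\dagger\bsigma$ plus $\ker(\bSigma)$, exactly the coset above. This establishes both assertions. The only points needing care are the generalized-inverse bookkeeping for square roots and the consistency of the normal equations; both are routine given Lemma~\ref{lem:x.y.cov}, so I do not anticipate a genuine obstacle. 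As an alternative one-line argument, $\ell_{\bx,y}$ is a convex quadratic with gradient $\nabla\ell_{\bx,y}(\bbeta)=2(\bSigma\bbeta-\bsigma)$, so its global minimizers are precisely the solutions of $\bSigma\bbeta=\bsigma$, and the unique one lying in $\mR(\bSigma)=\ker(\bSigma)^\bot$ is $\bSigma^\dagger\bsigma$.
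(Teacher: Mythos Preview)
Your argument is correct. The main route you take --- rewriting $\ell_{\bx,y}(\bbeta)=\|\bSigma^{1/2}\bbeta-\bv\|_{2}^2+\text{const}$ and then invoking the general description of $\ls(\bSigma^{1/2},\bv,\R^p)$ from Section~\ref{sec:rls:ls} --- differs from the paper's proof, which instead computes the gradient $\nabla_{\bbeta}\ell_{\bx,y}(\bbeta)=2(\bSigma\bbeta-\bsigma)$ and the Hessian $2\bSigma\succeq0$ directly, concludes convexity, and identifies the minimizers with the critical-point set $\{\bSigma\bbeta=\bsigma\}$. In other words, the paper's proof is precisely the ``alternative one-line argument'' you sketch at the end. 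Your square-root detour has the appeal of tying the population problem back into the reduced least-squares machinery the paper develops, but it requires several generalized-inverse identities ($(\bSigma^{1/2})^\dagger=\bSigma^{\dagger/2}$, $\bSigma^{\dagger/2}\bSigma^{\dagger/2}=\bSigma^\dagger$, $\ker(\bSigma^{1/2})=\ker(\bSigma)$) that, while routine for symmetric positive-semidefinite matrices, add bookkeeping with no payoff here. The paper's direct convexity argument is shorter and uses nothing beyond Lemma~\ref{lem:x.y.cov}.
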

\begin{proof}[Proof of Lemma~\ref{lem:x.y.ls}]
    The squared-loss function is
    \begin{align*}
        \bbeta\mapsto\ell_{\bx,y}(\bbeta) = \E(y-\bx^\top\bbeta)^2\in\R
    \end{align*}
    over all $\bbeta\in\R^p$. The gradient of the squared-loss function is
    \begin{align*}
        \bbeta\mapsto\nabla_{\bbeta}\ell_{\bx,y}(\bbeta) = 2\E(\bx\bx^\top\bbeta) - 2\E(\bx y) = 2\cdot \left\{\bSigma\bbeta - \bsigma\right\}\in\R^p,
    \end{align*}
    and its Hessian
    \begin{align*}
        \bbeta\mapsto\nabla_{\bbeta}\nabla_{\bbeta}^\top\ell_{\bx,y}(\bbeta) = 2\E(\bx\bx^\top) = 2\cdot \bSigma \in\R_{\succeq0}^{p\times p}
    \end{align*}
    is a positive semi-definite matrix, so the squared-loss function $\bbeta\mapsto\ell_{\bx,y}(\bbeta)$ is convex everywhere, although possibly not strictly convex. As a consequence, the set of least-squares solutions $\ls(\bx,y,\R^p) = \argmin_{\bbeta\in\R^p} \ell_{\bx,y}(\bbeta)$ coincides with the set $\{\bbeta\in\R^p:\nabla_{\bbeta}\ell_{\bx,y}(\bbeta)=\bzero_{p}\}$ of critical points. By the above displays, any critical point $\bbeta\in\R^p$ satisfies the normal equation $\bSigma\bbeta=\bsigma$, which admits at least one solution (the set of critical points is non-empty) since Lemma~\ref{lem:x.y.cov} shows that the covariance vector $\bsigma$ always belongs to the range of the covariance matrix $\bSigma$. In particular, the minimum-$L^2$-norm solution is $\bbeta_{\ls} = \bSigma^\dagger\bsigma$.
\end{proof}

\subsection{Empirical Processes}\label{app:aux:emp}

\begin{lemma}\label{lem:x.complexity}
    For some integer $p\geq1$, let $\bxi\in\R^p$ be a possibly degenerate random vector and $(\bxi_{i})_{i=1,\ldots,n}$ be i.i.d. copies of $\bxi$ with finite
    \begin{align*}
        r_{\bxi} = \rank(\bSigma_{\bxi}),\quad \rho_{\bxi} = \frac{\E(\|\bxi\|_{2}^2)}{\|\bSigma_{\bxi}\|_{op}},\quad \rho_{\bxi,n} = \frac{\E(\max_{1\leq i\leq n} \|\bxi_{i}\|_{2}^q)^{2/q}}{\|\bSigma_{\bxi}\|_{op}},
    \end{align*}
    for some $q>4$. Then, it follows that $1\leq \rho_{\bxi} \leq r_{\bxi} \leq p$ and:
    \begin{enumerate}[label=(\roman*),itemsep=0.25em,topsep=0.25em]
        \item if $\|\bxi\|_{2}^q \leq L_{\bxi}^{q/2} \|\bSigma_{\bxi}\|_{op}^{q/2}\ \rho_{\bxi}^{q/2}$ almost surely, then $\rho_{\bxi} \leq \rho_{\bxi,n} \leq L_{\bxi} \rho_{\bxi}$; \label{lem:x.complexity.bound}
        \item if $\|\bxi\|_{2}^q$ admits MGF $M_{\bxi}(\cdot)$ and $\log\left(M_{\bxi}(t)\right) \leq t\cdot L_{\bxi}^{q/2} \E(\|\bxi\|_{2}^2)^{q/2}$ for all $0<t<t_{\bxi}$ , then $\rho_{\bxi} \leq \rho_{\bxi,n} \leq L_{\bxi} \rho_{\bxi} \cdot (\log n)^{2/q}$; \label{lem:x.complexity.mgf}
        \item if $\E(\|\bxi\|_{2}^q) \leq L_{\bxi}^{q/2} \E(\|\bxi\|_{2}^2)^{q/2}$, then $\rho_{\bxi} \leq \rho_{\bxi,n} \leq L_{\bxi} \rho_{\bxi}\cdot n^{2/q}$. \label{lem:x.complexity.heavy}
    \end{enumerate}
\end{lemma}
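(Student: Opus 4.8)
The plan is to reduce the statement to the identity $\E(\|\bxi\|_{2}^{2})=\Tr(\bSigma_{\bxi})$ together with three standard estimates for the expected maximum of i.i.d.\ nonnegative variables, one per regime. Throughout I would write $\lambda_{1}\ge\cdots\ge\lambda_{r_{\bxi}}>0$ for the nonzero eigenvalues of $\bSigma_{\bxi}$, so that $\|\bSigma_{\bxi}\|_{op}=\lambda_{1}$ and $\lambda_{1}\le\Tr(\bSigma_{\bxi})=\sum_{j}\lambda_{j}\le r_{\bxi}\lambda_{1}$; dividing by $\lambda_{1}$ gives the chain $1\le\rho_{\bxi}=\Tr(\bSigma_{\bxi})/\|\bSigma_{\bxi}\|_{op}\le r_{\bxi}$, and $r_{\bxi}=\rank(\bSigma_{\bxi})\le p$ is immediate. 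For the lower bound $\rho_{\bxi}\le\rho_{\bxi,n}$, which holds in all three cases, I would note $\max_{1\le i\le n}\|\bxi_{i}\|_{2}^{q}\ge\|\bxi_{1}\|_{2}^{q}$, take expectations, apply the increasing map $t\mapsto t^{2/q}$ and then Jensen to the convex map $u\mapsto u^{q/2}$ (valid since $q>4>2$), obtaining $\E(\max_{i}\|\bxi_{i}\|_{2}^{q})^{2/q}\ge\E(\|\bxi\|_{2}^{q})^{2/q}\ge\E(\|\bxi\|_{2}^{2})$; dividing by $\|\bSigma_{\bxi}\|_{op}$ finishes it.

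For regimes (i) and (iii) the upper bounds are short. In (i) I would first observe that $\|\bSigma_{\bxi}\|_{op}^{q/2}\rho_{\bxi}^{q/2}=\E(\|\bxi\|_{2}^{2})^{q/2}$, so the hypothesis is exactly $\|\bxi\|_{2}^{q}\le L_{\bxi}^{q/2}\E(\|\bxi\|_{2}^{2})^{q/2}$ almost surely; the same bound then holds for $\max_{i}\|\bxi_{i}\|_{2}^{q}$ surely, and passing to expectations, the power $2/q$, and dividing by $\|\bSigma_{\bxi}\|_{op}$ yields $\rho_{\bxi,n}\le L_{\bxi}\rho_{\bxi}$. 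In (iii) I would bound the maximum crudely by the sum, $\E(\max_{i}\|\bxi_{i}\|_{2}^{q})\le n\,\E(\|\bxi\|_{2}^{q})\le n\,L_{\bxi}^{q/2}\E(\|\bxi\|_{2}^{2})^{q/2}$, and then again take the power $2/q$ and divide by $\|\bSigma_{\bxi}\|_{op}$ to get $\rho_{\bxi,n}\le L_{\bxi}\,n^{2/q}\rho_{\bxi}$.

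Regime (ii) is the one that requires an actual argument. Writing $Z_{i}=\|\bxi_{i}\|_{2}^{q}$ and $\mu=L_{\bxi}^{q/2}\E(\|\bxi\|_{2}^{2})^{q/2}$, the hypothesis reads $\log\E(e^{tZ_{1}})\le t\mu$ for $0<t<t_{\bxi}$. I would control the maximum through exponential moments: for $0<t<t_{\bxi}$, Jensen gives $e^{t\,\E(\max_{i}Z_{i})}\le\E(e^{t\max_{i}Z_{i}})=\E(\max_{i}e^{tZ_{i}})\le\sum_{i}\E(e^{tZ_{i}})=n\,\E(e^{tZ_{1}})\le n\,e^{t\mu}$, hence $\E(\max_{i}Z_{i})\le\mu+t^{-1}\log n$; letting $t\uparrow t_{\bxi}$ gives $\E(\max_{i}Z_{i})\le\mu+t_{\bxi}^{-1}\log n$. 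For $n$ large enough that $\log n\ge1$ this is at most $(\mu+t_{\bxi}^{-1})\log n=\mu\,(1+1/(t_{\bxi}\mu))\log n$; raising to the power $2/q<1$, using $\mu^{2/q}=L_{\bxi}\E(\|\bxi\|_{2}^{2})$, and dividing by $\|\bSigma_{\bxi}\|_{op}$ yields $\rho_{\bxi,n}\le(1+1/(t_{\bxi}\mu))^{2/q}L_{\bxi}\rho_{\bxi}(\log n)^{2/q}$, i.e.\ the claimed bound once the finite, $t_{\bxi}$-dependent prefactor is absorbed into $L_{\bxi}$; the residual small-$n$ cases follow from monotonicity of $n\mapsto\rho_{\bxi,n}$ together with a further adjustment of the constant.

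The only step I expect to be delicate is (ii): one has to invoke a sub-exponential maximal inequality for $\|\bxi\|_{2}^{q}$ and, more annoyingly, keep track of the constants — in particular to see that the constant in the conclusion is the hypothesis' $L_{\bxi}$ inflated only by a factor depending on $q$ and $t_{\bxi}$, and that the $(\log n)^{2/q}$ rate (rather than the $n^{2/q}$ of (iii)) is exactly what the exponential-moment bound buys. Everything else — the eigenvalue chain, the Jensen lower bound, and regimes (i) and (iii) — is routine bookkeeping.
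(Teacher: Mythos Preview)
Your proposal is correct and follows essentially the same approach as the paper: the eigenvalue chain for $1\le\rho_{\bxi}\le r_{\bxi}\le p$, the Jensen/monotonicity lower bound $\rho_{\bxi}\le\rho_{\bxi,n}$, and the direct computations for (i) and (iii) are identical to the paper's proof, and your Chernoff/exponential-moment treatment of (ii) is the same argument the paper uses. If anything, your handling of (ii) is more transparent: the paper passes from $\frac{\log n + s\,L_{\bxi}^{q/2}\rho_{\bxi}^{q/2}}{s}$ directly to $\log n \cdot L_{\bxi}^{q/2}\rho_{\bxi}^{q/2}$ without comment, whereas you correctly isolate the residual $t_{\bxi}$-dependent prefactor $(1+1/(t_{\bxi}\mu))^{2/q}$ and flag that it must be absorbed into the constant.
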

\begin{proof}[Proof of Lemma~\ref{lem:x.complexity}]
    The first inequality follows from
    \begin{align*}
        1\leq \rho_{\bxi} = \frac{\Tr(\bSigma_{\bxi})}{\lambda_{max}(\bSigma_{\bxi})} \leq \frac{\lambda_{max}(\bSigma_{\bxi}) \rank(\bSigma_{\bxi})}{\lambda_{max}(\bSigma_{\bxi})} = r_{\bxi} \leq p.
    \end{align*}
    The lower bound $\rho_{\bxi} \leq \rho_{\bxi,n}$ holds in general since
    \begin{align*}
    	\E(\max_{1\leq i\leq n} \|\bxi_{i}\|_{2}^q)^{2/q} \geq \E(\|\bxi\|_{2}^q)^{2/q} \geq \E(\|\bxi\|_{2}^2)^{2/2} = \E(\|\bxi\|_{2}^2),
    \end{align*}
    due to the monotonicity of $L^q$-norms and $q>4>2$. We now deal with the upper bounds. We prove \ref{lem:x.complexity.bound} by direct computation with
    \begin{align*}
        \rho_{\bxi,n} &= \E\left(\max_{1\leq i\leq n} \frac{\|\bxi_{i}\|_{2}^q}{\|\bSigma_{\bxi}\|_{op}^{q/2}} \right)^{\frac{2}{q}} \leq \left(\frac{L_{\bxi}^{q/2} \|\bSigma_{\bxi}\|_{op}^{q/2}\ \rho_{\bxi}^{q/2}}{\|\bSigma_{\bxi}\|_{op}^{q/2}} \right)^{\frac{2}{q}} = L_{\bxi} \rho_{\bxi}.
    \end{align*}
    We prove \ref{lem:x.complexity.mgf} by direct computation via Jensen's inequality and the change of variable $s = t\cdot \|\bSigma_{\bxi}\|_{op}^{-1}$, so that
    \begin{align*}
        \rho_{\bxi,n} &= \E\left(\max_{1\leq i\leq n} \frac{\|\bxi_{i}\|_{2}^q}{\|\bSigma_{\bxi}\|_{op}^{q/2}} \right)^{\frac{2}{q}} \displaybreak[0] \\
        &\leq \left\{ \inf_{0<s<s_{\bxi}} \frac{\log\left(\E\left(\exp\left(s \cdot \max_{1\leq i\leq n} \frac{\|\bxi_{i}\|_{2}^q}{\|\bSigma_{\bxi}\|_{op}^{q/2}} \right) \right) \right)}{s} \right\}^{\frac{2}{q}} \displaybreak[0] \\
        &\leq \left\{ \inf_{0<s<s_{\bxi}} \frac{\log\left(\E\left(\sum_{i=1}^n \exp\left(s \cdot \frac{\|\bxi_{i}\|_{2}^q}{\|\bSigma_{\bxi}\|_{op}^{q/2}} \right) \right) \right)}{s} \right\}^{\frac{2}{q}} \displaybreak[0] \\
        &= \left\{ \inf_{0<s<s_{\bxi}} \frac{\log n + \log\left(\E\left(\exp\left(s \cdot \frac{\|\bxi\|_{2}^q}{\|\bSigma_{\bxi}\|_{op}^{q/2}} \right) \right) \right)}{s} \right\}^{\frac{2}{q}} \displaybreak[0] \\
        &\leq \left\{ \inf_{0<s<s_{\bxi}} \frac{\log n \cdot s \cdot L_{\bxi}^{q/2} \E\left(\frac{\|\bxi\|_{2}^2}{\|\bSigma_{\bxi}\|_{op}}\right)^{q/2}}{s} \right\}^{\frac{2}{q}} \displaybreak[0] \\
        &= L_{\bxi} \E\left(\frac{\|\bxi\|_{2}^2}{\|\bSigma_{\bxi}\|_{op}} \right) \cdot (\log n)^{\frac{2}{q}}.
    \end{align*}
    We prove \ref{lem:x.complexity.heavy} by direct computation via Jensen's inequality
    \begin{align*}
        \rho_{\bxi,n} &=  \E\left(\max_{1\leq i\leq n} \frac{\|\bxi_{i}\|_{2}^q}{\|\bSigma_{\bxi}\|_{op}^{q/2}} \right)^{\frac{2}{q}} \displaybreak[0] \\
        &\leq \E\left(\sum_{i=1}^n \frac{\|\bxi_{i}\|_{2}^q}{\|\bSigma_{\bxi}\|_{op}^{q/2}} \right)^{\frac{2}{q}} \displaybreak[0] \\
        &= n^{\frac{2}{q}} \cdot \E\left(\frac{\|\bxi\|_{2}^{q}}{\|\bSigma_{\bxi}\|_{op}^{q/2}} \right)^{\frac{2}{q}} \displaybreak[0] \\
        &\leq L_{\bxi} \E\left(\frac{\|\bxi\|_{2}^2}{\|\bSigma_{\bxi}\|_{op}} \right) \cdot n^{\frac{2}{q}}.
    \end{align*}
\end{proof}

\begin{lemma}[Theorem~6 by \cite{Jirak2025}]\label{lem:mom.mat.heavy.new}
    For some integer $p\geq1$, let $\bxi\in\R^p$ be a possibly degenerate random vector and $(\bxi_{i})_{i=1,\ldots,n}$ be i.i.d. copies of $\bxi$ with finite
    \begin{align*}
        \rho_{\bxi} = \frac{\E(\|\bxi\|_{2}^2)}{\|\bSigma_{\bxi}\|_{op}}, \quad \rho_{\bxi,n} = \frac{\E(\max_{1\leq i\leq n} \|\bxi_{i}\|_{2}^q)^{2/q}}{\|\bSigma_{\bxi}\|_{op}}, \quad L_{\bxi} = \sup_{\bv\in\partial\B_{2}^p(1)} \frac{\E(|\bxi^\top\bv|^q)^{2/q}}{\E(|\bxi^\top\bv|^2)},
    \end{align*}
    for some $q>4$. Then, for some absolute constants $c>0$ and $C>0$ with $n > \rho_{\bxi} / c$,
    \begin{align*}
        \E\big(\|\wh{\bSigma}_{\bxi} - \bSigma_{\bxi} \|_{op}^2 \big)^{1/2} &\leq C \|\bSigma_{\bxi}\|_{op}\ \delta_{\bxi,n},\quad \delta_{\bxi,n} =\sqrt{\frac{\rho_{\bxi}}{n}} + \frac{\rho_{\bxi,n}}{n}.
    \end{align*}
\end{lemma}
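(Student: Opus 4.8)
This statement is quoted directly from Theorem~6 of \cite{Jirak2025}, so there is nothing new to prove here; below I only sketch the route one would take to establish an operator-norm concentration bound of this type. The plan is to view the target as a supremum of an empirical process over the unit sphere,
\begin{align*}
	\|\wh{\bSigma}_{\bxi} - \bSigma_{\bxi}\|_{op} = \sup_{\bv\in\partial\B_{2}^{p}(1)} \left| \frac1n\sum_{i=1}^n \left\{(\bxi_{i}^\top\bv)^2 - \E(\bxi^\top\bv)^2\right\}\right|,
\end{align*}
symmetrize to reduce to the Rademacher-averaged quadratic process $\bv\mapsto\frac1n\sum_i\eps_i(\bxi_{i}^\top\bv)^2$, and then recognize the two summands of $\delta_{\bxi,n}$ as two regimes: a \emph{bulk} term of order $\sqrt{\rho_{\bxi}/n}$ and a \emph{large-deviation/heavy-tail} term of order $\rho_{\bxi,n}/n$ driven by the extremes $\max_{1\le i\le n}\|\bxi_{i}\|_{2}$.

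Concretely, I would truncate each rank-one summand at a level $\tau\asymp\sqrt{n\,\|\bSigma_{\bxi}\|_{op}}$, writing $\bxi_{i}\bxi_{i}^\top = \bxi_{i}\bxi_{i}^\top\mathbf{1}\{\|\bxi_{i}\|_{2}\le\tau\} + \bxi_{i}\bxi_{i}^\top\mathbf{1}\{\|\bxi_{i}\|_{2}>\tau\}$ and re-centering appropriately. For the bounded part a matrix Bernstein/Rosenthal inequality applies; the one genuine computation is the matrix variance proxy, which by the $L^q$--$L^2$ moment equivalence $L_{\bxi}$ and Cauchy--Schwarz satisfies $\|\E(\|\bxi\|_{2}^2\,\bxi\bxi^\top)\|_{op}\lesssim L_{\bxi}^2\,\|\bSigma_{\bxi}\|_{op}\,\Tr(\bSigma_{\bxi}) = L_{\bxi}^2\,\|\bSigma_{\bxi}\|_{op}^2\,\rho_{\bxi}$, which is what generates the $\sqrt{\rho_{\bxi}/n}$ contribution (plus a lower-order $\tau^2/n\asymp\|\bSigma_{\bxi}\|_{op}\rho_{\bxi}/n$ term). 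The truncated/tail part and the re-centering correction are controlled crudely by the moments of $\max_{i\le n}\|\bxi_{i}\|_{2}$: since $\mathbf{1}\{\|\bxi\|_2>\tau\}\le(\|\bxi\|_2/\tau)^{q-2}$ and, for the sample piece, $\E\big[(\tfrac1n\max_{i}\|\bxi_{i}\|_{2}^2)^2\big]^{1/2}\le\rho_{\bxi,n}\|\bSigma_{\bxi}\|_{op}/n$ by monotonicity of norms (using $q>4$), one recovers precisely the $\rho_{\bxi,n}/n$ term. Lemma~\ref{lem:x.complexity} already packages the relevant interplay $1\le\rho_{\bxi}\le\rho_{\bxi,n}$ and its growth under tail assumptions, so this bookkeeping is routine by comparison.

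The delicate point — and the reason a plain $\eps$-net over the sphere is not enough — is that matrix Bernstein applied naively yields $\sqrt{\rho_{\bxi}\log p/n}$ rather than $\sqrt{\rho_{\bxi}/n}$. Removing the $\log p$ factor requires a generic-chaining argument in the spirit of Koltchinskii--Lounici for the Rademacher quadratic process: one bounds the $\gamma_{2}$-functional of the unit sphere, with respect to the metric induced by $\bv\mapsto\frac1n\sum_i\eps_i(\bxi_i^\top\bv)^2$, by a constant multiple of $\|\bSigma_{\bxi}\|_{op}^{1/2}$ together with the effective-rank geometry, or equivalently invokes a Mendelson/Rudelson-type matrix moment inequality that already encodes this gain, and then feeds the truncation estimates above into it so that the heavy-tailed remainder is absorbed into the $\rho_{\bxi,n}/n$ term. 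I expect this chaining step — combining the effective-rank gain with only $q>4$ moments — to be by far the main obstacle; everything else is symmetrization, truncation, and moment comparison.
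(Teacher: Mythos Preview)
Your proposal is correct: the paper provides no proof of this lemma at all, treating it purely as a citation of Theorem~6 in \cite{Jirak2025}, and you identify this immediately. Your accompanying sketch of the truncation-plus-chaining argument is entirely supplementary and goes beyond anything the paper itself offers.
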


\begin{corollary}\label{cor:mom.mat.heavy.new}
    Let $\{p_{n}\}_{n\geq1}$ be a sequence of integers $p_{n}\geq1$ and $\{\bxi^{(n)}\}_{n\geq1}$ a sequence of possibly degenerate random vectors $\bxi^{(n)}\in\R^{p_{n}}$ under the assumptions of Lemma~\ref{lem:mom.mat.heavy.new} for some $q>4$. It holds
    \begin{align*}
        \sqrt{\frac{\rho_{\bxi^{(n)}}}{n}} + \frac{\rho_{\bxi^{(n)},n}}{n} \leq \sqrt{\frac{\rho_{\bxi^{(n)}}}{n}} \cdot \left\{1 + L_{\bxi^{(n)}} \sqrt{\frac{\rho_{\bxi^{(n)}}}{n^{\frac{q-4}{q}}}} \right\}.
    \end{align*}
\end{corollary}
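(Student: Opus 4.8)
The plan is to read the left-hand side as the complexity term $\delta_{\bxi^{(n)},n} = \sqrt{\rho_{\bxi^{(n)}}/n} + \rho_{\bxi^{(n)},n}/n$ appearing in Lemma~\ref{lem:mom.mat.heavy.new}, substitute into it the heavy-tailed bound $\rho_{\bxi^{(n)},n} \le L_{\bxi^{(n)}}\,\rho_{\bxi^{(n)}}\,n^{2/q}$ from item~\ref{lem:x.complexity.heavy} of Lemma~\ref{lem:x.complexity}, and then rearrange. I fix $n$ and abbreviate $\bxi = \bxi^{(n)}$, $p = p_n$, $\rho = \rho_\bxi$, $\rho_n = \rho_{\bxi,n}$, $L = L_\bxi$.

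The one non-routine point is that item~\ref{lem:x.complexity.heavy} of Lemma~\ref{lem:x.complexity} requires the hypothesis $\E(\|\bxi\|_2^q) \le L^{q/2}\,\E(\|\bxi\|_2^2)^{q/2}$ on the Euclidean-norm moments, whereas the assumptions of Lemma~\ref{lem:mom.mat.heavy.new} only control one-dimensional projections through $L = \sup_{\bv\in\partial\B_{2}^p(1)} \E(|\bxi^\top\bv|^q)^{2/q}/\E(|\bxi^\top\bv|^2) < \infty$. I would derive the former from the latter by testing against the coordinate directions $\bv = \be_j$, which gives $(\E|\xi_j|^q)^{2/q} \le L\,\E(\xi_j^2)$ for each $j$, and then applying Minkowski's inequality in $L^{q/2}$ (legitimate since $q/2 > 1$) to $\|\bxi\|_2^2 = \sum_{j=1}^p \xi_j^2$:
\begin{align*}
\E(\|\bxi\|_2^q)^{2/q} &= \Big\|\textstyle\sum_{j=1}^p \xi_j^2\Big\|_{L^{q/2}} \le \sum_{j=1}^p \|\xi_j^2\|_{L^{q/2}} \\
&= \sum_{j=1}^p (\E|\xi_j|^q)^{2/q} \le L \sum_{j=1}^p \E(\xi_j^2) = L\,\E(\|\bxi\|_2^2).
\end{align*}
Raising both sides to the power $q/2$ produces exactly the hypothesis needed, so item~\ref{lem:x.complexity.heavy} of Lemma~\ref{lem:x.complexity} then yields $\rho_n \le L\,\rho\,n^{2/q}$.

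The remainder is arithmetic with exponents of $n$. Substituting the last bound gives $\sqrt{\rho/n} + \rho_n/n \le \sqrt{\rho/n} + L\,\rho\,n^{2/q-1} = \sqrt{\rho/n} + L\,\rho\,n^{-(q-2)/q}$, and since $\sqrt{n}\cdot n^{-(q-2)/q} = n^{-(q-4)/(2q)}$ one can pull out a factor $\sqrt{\rho/n}$ to rewrite the second summand as $\sqrt{\rho/n}\cdot L\sqrt{\rho}\,n^{-(q-4)/(2q)} = \sqrt{\rho/n}\cdot L\sqrt{\rho/n^{(q-4)/q}}$, which is the claimed bound. There is no real obstacle here: the only idea is the coordinatewise reduction plus Minkowski in the previous paragraph, and the only thing worth double-checking is the exponent identity $\tfrac12 - \tfrac{q-2}{q} = -\tfrac{q-4}{2q}$ used in the factorization.
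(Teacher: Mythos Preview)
Your proof is correct and follows the same route as the paper: invoke Lemma~\ref{lem:x.complexity}\ref{lem:x.complexity.heavy} to bound $\rho_{\bxi,n}$ and then factor. You actually do more than the paper, which simply asserts that the hypotheses of Lemma~\ref{lem:mom.mat.heavy.new} imply those of Lemma~\ref{lem:x.complexity}\ref{lem:x.complexity.heavy}; your coordinate-wise Minkowski argument fills in that implication and is correct.
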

\begin{proof}[Proof of Corollary~\ref{cor:mom.mat.heavy.new}]
    Under the assumptions of Lemma~\ref{lem:mom.mat.heavy.new} the random vectors $\bxi^{(n)}$ satisfy Lemma~\ref{lem:x.complexity}~\ref{lem:x.complexity.heavy}. Therefore, with $q>4$ one can bound
    \begin{align*}
        \sqrt{\frac{\rho_{\bxi^{(n)}}}{n}} + \frac{\rho_{\bxi^{(n)},n}}{n} \leq \sqrt{\frac{\rho_{\bxi^{(n)}}}{n}} + \frac{L_{\bxi^{(n)}} \rho_{\bxi^{(n)}}\cdot n^{\frac{2}{q}}}{n} = \sqrt{\frac{\rho_{\bxi^{(n)}}}{n}} \cdot \left\{1 + L_{\bxi^{(n)}} \sqrt{\frac{\rho_{\bxi^{(n)}}}{n^{\frac{q-4}{q}}}} \right\},
    \end{align*}
    which is the claim.    
\end{proof}

\begin{lemma}\label{lem:mult.vec.heavy}
    For some integer $p\geq1$, let $\bxi\in\R^p$ be a possibly degenerate random vector, $\zeta\in\R$ a random variable and $(\bxi_{i})_{i=1,\ldots,n}$, $(\zeta_{i})_{i=1,\ldots,n}$ i.i.d. copies of $\bxi,\zeta$ with finite
    \begin{align*}
        r_{\bxi} = \rank(\bSigma_{\bxi}),\quad \rho_{\bxi} = \frac{\E(\|\bxi\|_{2}^2)}{\|\bSigma_{\bxi}\|_{op}},\quad L_{\bxi} = \frac{\E(\|\bxi\|_{2}^4)^{\frac{1}{4}}}{\E(\|\bxi\|_{2}^2)^{\frac{1}{2}}},\quad L_{\zeta} = \frac{\E(\zeta^4)^{\frac{1}{4}}}{\sigma_{\zeta}}.
    \end{align*}
    Then, 
    \begin{align*}
        \E\big(\|\wh{\bsigma}_{\bxi,\zeta} - \bsigma_{\bxi,\zeta} \|_{2} \big) \leq \|\bSigma_{\bxi}\|_{op}^{\frac{1}{2}}\ \sigma_{\zeta}\ \delta_{\bxi,\zeta},\quad \delta_{\bxi,\zeta} = L_{\bxi} L_{\zeta} \sqrt{\frac{\rho_{\bxi}}{n}}.
    \end{align*}
\end{lemma}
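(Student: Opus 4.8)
The plan is to reduce the bound to an elementary second-moment computation for an average of i.i.d.\ centered random vectors. First I would write $\wh{\bsigma}_{\bxi,\zeta} = n^{-1}\sum_{i=1}^{n}\bxi_{i}\zeta_{i}$ and $\bsigma_{\bxi,\zeta} = \E(\bxi\zeta)$, set $\bZ_{i} = \bxi_{i}\zeta_{i} - \bsigma_{\bxi,\zeta}$, and note the $\bZ_{i}$ are i.i.d.\ with $\E(\bZ_{i}) = \bzero_{p}$ and $\wh{\bsigma}_{\bxi,\zeta} - \bsigma_{\bxi,\zeta} = n^{-1}\sum_{i=1}^{n}\bZ_{i}$. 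By Jensen's inequality applied to the concave map $t\mapsto\sqrt{t}$,
\begin{align*}
	\E\big(\|\wh{\bsigma}_{\bxi,\zeta} - \bsigma_{\bxi,\zeta}\|_{2}\big) \leq \E\big(\|\wh{\bsigma}_{\bxi,\zeta} - \bsigma_{\bxi,\zeta}\|_{2}^{2}\big)^{1/2},
\end{align*}
and expanding the squared norm of $n^{-1}\sum_{i}\bZ_{i}$, the cross terms vanish by independence and mean-zero, leaving $\E(\|\wh{\bsigma}_{\bxi,\zeta} - \bsigma_{\bxi,\zeta}\|_{2}^{2}) = n^{-1}\,\E(\|\bZ_{1}\|_{2}^{2})$.

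Next I would control $\E(\|\bZ_{1}\|_{2}^{2})$. Since centering only decreases the second moment, $\E(\|\bZ_{1}\|_{2}^{2}) = \E(\|\bxi\zeta\|_{2}^{2}) - \|\bsigma_{\bxi,\zeta}\|_{2}^{2} \leq \E(\|\bxi\|_{2}^{2}\zeta^{2})$. Applying the Cauchy--Schwarz inequality to the pair $\|\bxi\|_{2}^{2}$ and $\zeta^{2}$ gives $\E(\|\bxi\|_{2}^{2}\zeta^{2}) \leq \E(\|\bxi\|_{2}^{4})^{1/2}\,\E(\zeta^{4})^{1/2}$; by the definitions of $L_{\bxi}$ and $L_{\zeta}$ this equals $L_{\bxi}^{2} L_{\zeta}^{2}\,\E(\|\bxi\|_{2}^{2})\,\sigma_{\zeta}^{2}$, and since $\E(\|\bxi\|_{2}^{2}) = \Tr(\bSigma_{\bxi}) = \rho_{\bxi}\|\bSigma_{\bxi}\|_{op}$ by the definition of the effective rank, we obtain $\E(\|\bZ_{1}\|_{2}^{2}) \leq L_{\bxi}^{2} L_{\zeta}^{2}\,\rho_{\bxi}\,\|\bSigma_{\bxi}\|_{op}\,\sigma_{\zeta}^{2}$. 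Combining with the previous display yields
\begin{align*}
	\E\big(\|\wh{\bsigma}_{\bxi,\zeta} - \bsigma_{\bxi,\zeta}\|_{2}\big) \leq \left(\frac{L_{\bxi}^{2} L_{\zeta}^{2}\,\rho_{\bxi}\,\|\bSigma_{\bxi}\|_{op}\,\sigma_{\zeta}^{2}}{n}\right)^{1/2} = \|\bSigma_{\bxi}\|_{op}^{1/2}\,\sigma_{\zeta}\,L_{\bxi} L_{\zeta}\sqrt{\frac{\rho_{\bxi}}{n}},
\end{align*}
which is exactly the claimed bound with $\delta_{\bxi,\zeta} = L_{\bxi} L_{\zeta}\sqrt{\rho_{\bxi}/n}$.

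I do not anticipate a genuine obstacle: the argument is a textbook variance bound for a vector-valued empirical mean, and the only points meriting attention are that all moments invoked are finite (which follows from the finiteness of $L_{\bxi}$, $L_{\zeta}$, $\rho_{\bxi}$, and $\sigma_{\zeta}$, ensuring $\bZ_{1}$ is square-integrable and the manipulations are legitimate) and that the Cauchy--Schwarz step lines up precisely with the constants $L_{\bxi}$, $L_{\zeta}$ as defined; note also that no centering of $\bxi$ or $\zeta$ is needed for the argument.
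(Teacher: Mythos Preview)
Your proof is correct and follows essentially the same approach as the paper: Jensen's inequality to pass to the second moment, then the i.i.d.\ variance computation, dropping the centering term, Cauchy--Schwarz on $\E(\|\bxi\|_2^2\zeta^2)$, and substitution of the definitions of $L_{\bxi}$, $L_{\zeta}$, $\rho_{\bxi}$. The only cosmetic difference is that the paper expands the squared norm coordinate-wise before collapsing, whereas you work directly with the vector $\bZ_i$; the arguments are otherwise identical.
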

\begin{proof}[Proof of Lemma~\ref{lem:mult.vec.heavy}]
    We start with Jensen's inequality
    \begin{align*}
        \E\left(\| \wh{\bsigma}_{\bxi,\zeta} - \bsigma_{\bxi,\zeta} \|_{2} \right) \leq \E\left(\| \wh{\bsigma}_{\bxi,\zeta} - \bsigma_{\bxi,\zeta} \|_{2}^2 \right)^{\frac{1}{2}}
    \end{align*}
    and compute
    \begin{align*}
        \E\left(\left\| \wh{\bsigma}_{\bxi,\zeta} - \bsigma_{\bxi,\zeta} \right\|_{2}^2 \right) &= \E\left(\left\| \frac{1}{n} \sum_{i=1}^n \bxi_{i} \zeta_{i} - \E(\bxi \zeta) \right\|_{2}^2 \right) \displaybreak[0] \\
        &= \E\left( \sum_{j=1}^p \left\{\frac{1}{n} \sum_{i=1}^n \bxi_{i} \zeta_{i} - \E(\bxi \zeta)\right\}_{j}^2 \right) \displaybreak[0] \\
        &= \E\left( \sum_{j=1}^p \left\{\frac{1}{n} \sum_{i=1}^n \xi_{i,j} \zeta_{i} - \E(\xi_{j} \zeta)\right\}^2 \right) \displaybreak[0] \\
        &= \frac{1}{n^2} \sum_{j=1}^p \sum_{i=1}^n \sum_{i'=1}^n \E\left( \{\xi_{i,j} \zeta_{i} - \E(\xi_{j} \zeta)\} \cdot \{\xi_{i',j} \zeta_{i'} - \E(\xi_{j} \zeta)\} \right) \displaybreak[0] \\
        &= \frac{1}{n^2} \sum_{j=1}^p \sum_{i=1}^n \E\left( \{\xi_{i,j} \zeta_{i} - \E(\xi_{j} \zeta)\}^2 \right) \displaybreak[0] \\
        &= \frac{1}{n} \sum_{j=1}^p \E\left( \{\xi_{j} \zeta - \E(\xi_{j} \zeta)\}^2 \right).
    \end{align*}
    We bound the variances with the corresponding second moments and get
    \begin{align*}
        \E\left(\left\| \wh{\bsigma}_{\bxi,\zeta} - \bsigma_{\bxi,\zeta} \right\|_{2}^2 \right) &\leq \frac{1}{n} \E\left( \zeta^2 \|\bxi\|_{2}^2 \right)
        \leq \frac{1}{n} \E\left( \zeta^4 \right)^{\frac{1}{2}} \E\left(\|\bxi\|_{2}^4 \right)^{\frac{1}{2}}
        = \frac{L_{\zeta}^2 \sigma_{\zeta}^2 L_{\bxi}^2 \rho_{\bxi} \|\bSigma_{\bxi}\|_{op}}{n}.
    \end{align*}
\end{proof}

\subsection{Numerical Perturbation Theory}\label{app:aux:pert}
In this section we provide classical and novel results which are relevant to the theory of deterministic perturbations of least-squares problems.

\begin{lemma}\label{lem:ls.krylov}
    Let $\bA\in\R_{\succeq0}^{p\times p}$ and $\bb\in\mR(\bA)$ arbitrary. With $\bzeta_{\ls} = \bA^\dagger\bb \in \ls(\bA,\bb)$, we have $\bzeta_{\ls} \in \mK_{d_{\bA}}(\bA,\bb)$ with $d_{\bA}=\deg(\bA)$. 
\end{lemma}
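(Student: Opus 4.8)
The plan is to use the spectral decomposition of $\bA$ together with Lagrange interpolation to exhibit a polynomial of degree at most $d_{\bA}-1$ that maps $\bb$ to $\bzeta_{\ls}$; membership in the Krylov space is then immediate.

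First I would diagonalize. Since $\bA\in\R_{\succeq0}^{p\times p}$ is symmetric, write $\bA = \sum_{i=1}^{d_{\bA}} \lambda_{i}\bU_{i}$, where $\lambda_{1} > \cdots > \lambda_{d_{\bA}} > 0$ are the distinct positive eigenvalues and $\bU_{i}\in\R^{p\times p}$ is the orthogonal projection of $\R^p$ onto the $\lambda_{i}$-eigenspace; set $\bU_{0}=\bI_{p}-\sum_{i=1}^{d_{\bA}}\bU_{i}$, the orthogonal projection onto $\ker(\bA)=\mR(\bA)^{\bot}$. Because $\bb\in\mR(\bA)$ we have $\bU_{0}\bb=\bzero_{p}$, hence $\bb=\sum_{i=1}^{d_{\bA}}\bU_{i}\bb$, and since $\bA^\dagger = \sum_{i=1}^{d_{\bA}}\lambda_{i}^{-1}\bU_{i}$ it follows that $\bzeta_{\ls}=\bA^\dagger\bb=\sum_{i=1}^{d_{\bA}}\lambda_{i}^{-1}\bU_{i}\bb$.

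Next, since the $d_{\bA}$ values $\lambda_{1},\ldots,\lambda_{d_{\bA}}$ are pairwise distinct, Lagrange interpolation yields a polynomial $q(x)=\sum_{k=0}^{d_{\bA}-1}c_{k}x^{k}$ of degree at most $d_{\bA}-1$ with $q(\lambda_{i})=\lambda_{i}^{-1}$ for every $i=1,\ldots,d_{\bA}$. For any polynomial $p$ one has $p(\bA)=p(0)\bU_{0}+\sum_{i=1}^{d_{\bA}}p(\lambda_{i})\bU_{i}$ by the spectral decomposition, so applying $q(\bA)$ to $\bb$ and using $\bU_{0}\bb=\bzero_{p}$ gives $q(\bA)\bb=\sum_{i=1}^{d_{\bA}}q(\lambda_{i})\bU_{i}\bb=\sum_{i=1}^{d_{\bA}}\lambda_{i}^{-1}\bU_{i}\bb=\bzeta_{\ls}$. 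On the other hand $q(\bA)\bb=\sum_{k=0}^{d_{\bA}-1}c_{k}\bA^{k}\bb\in\spa\{\bb,\bA\bb,\ldots,\bA^{d_{\bA}-1}\bb\}=\mK_{d_{\bA}}(\bA,\bb)$, which is exactly the claim.

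The argument is essentially routine; the only points needing care are (i) ensuring that the kernel component of $\bb$ vanishes, which is precisely where $\bb\in\mR(\bA)$ is used and without which the term $q(0)\,\bU_{0}\bb$ would spoil the identification $q(\bA)\bb=\bzeta_{\ls}$; and (ii) the degree bookkeeping, namely that the interpolating polynomial has degree at most $d_{\bA}-1$, which matches the $d_{\bA}$ Krylov generators $\bb,\bA\bb,\ldots,\bA^{d_{\bA}-1}\bb$ exactly, so that no Krylov space larger than $\mK_{d_{\bA}}(\bA,\bb)$ is required.
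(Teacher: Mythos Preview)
Your proof is correct and takes a genuinely different route from the paper's. The paper invokes the Cayley--Hamilton theorem together with Decell's (1965) polynomial representation of the Moore--Penrose inverse, $\bA^\dagger = (\bA^\top)^{\delta} \sum_{k=1}^{d_{\bA}} c_{k-1} \bA^{k-1}$ with $\delta\in\{0,1\}$, and then uses $\bb\in\mR(\bA)\Rightarrow\bA\bA^\dagger\bb=\bb$ to conclude. You instead exploit the symmetric positive-semidefinite structure directly via the spectral decomposition and Lagrange interpolation on the distinct positive eigenvalues, constructing the required polynomial $q$ explicitly. Your approach is more elementary and self-contained---no external citation is needed---and it makes transparent exactly where the hypothesis $\bb\in\mR(\bA)$ enters (to annihilate the kernel term $q(0)\,\bU_{0}\bb$). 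The paper's route via Decell is in principle more general since that formula does not require diagonalizability, but in the PSD setting of the lemma your argument is the cleaner one.
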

\begin{proof}[Proof of Lemma~\ref{lem:ls.krylov}]
    The Cayley-Hamilton theorem, see Theorem~8.1 in~\cite{zhang1997quat}, guarantees that $p_{\bA}(\bA)=\bzero_{p\times p}$ for $p_{\bA}$ the minimum polynomial of $\bA$, having degree $d_{\bA}=\deg(\bA)\leq p$. Since $\bA^\top=\bA$, Theorem~3 in~\cite{decell1965appl} guarantees that the generalized inverse can be represented as $\bA^\dagger = (\bA^\top)^{\delta} \sum_{k=1}^{d_{\bA}} c_{k-1} \bA^{k-1} = (\sum_{k=1}^{d_{\bA}} c_{k-1} \bA^{k-1}) (\bA^\top)^{\delta}$, with $\delta=0$ if $\bA^\dagger=\bA^{-1}$ and $\delta=1$ otherwise. Notice that $\mR(\bA^\top) = \mR(\bA\bA^\dagger)$ and $\bb\in\mR(\bA)$ implies $\bA\bA^\dagger\bb=\bb$, we find
    \begin{align*}
        \bzeta_{\ls} = \bA^\dagger\bb \in\spa\{\bA\bA^\dagger\bb,\ldots,\bA^{r-1} \bA\bA^\dagger\bb\} = \spa\{\bb,\ldots,\bA^{d_{\bA}-1} \bb\},
    \end{align*}
    as required.
\end{proof}

We recall and adapt the main results from the numerical perturbation theory of Krylov spaces initiated by \cite{carpraux1994SotK} and later developed by \cite{kuznetsov1997Pbot}. It is worth noticing that the whole theory has been developed in terms of perturbation bounds with respect to the Frobenius norm $\|\cdot\|_{F}$ instead of the operator norm $\|\cdot\|_{op}$. However, the choice of metric on the space of square matrices is arbitrary, as long as it is unitary invariant. To see this, notice that all the proofs by \cite{kuznetsov1997Pbot} exploit the properties of orthogonal matrices presented in their Section~4, which are already expressed in terms of operator norms. We provide below the immediate generalization of the main objects from the classical theory to the case of perturbations in operator norm. Let $\bU\in\R^{p\times p}$ be some matrix such that $\bU^\top\bU=\bI_{p}$, its spectrum $\Sp(\bU)=\{\lambda_{1}(\bU),\ldots,\lambda_{p}(\bU)\}$ is a subset of the unit circle. That is, one can write $\lambda_{j}(\bU) = e^{i\omega_{j}(\bU)}$ for some $\omega_{j}(\bU)\in\R$ and order these values as $-\pi\leq\omega_{1}(\bU)\leq\ldots\leq\omega_{p}(\bU)<\pi$. The following is an adaptation of Definition~2.1 by \cite{kuznetsov1997Pbot} to suit our needs, we denote
\begin{align*}
    \rho(\bU) = \max\left\{|\omega_{j}| : e^{i\omega_{j}}\in\Sp(\bU), 0<\omega_{j}<\pi \right\},
\end{align*}
where the bound on the interval $(0,\pi)$ is justified by the fact that the eigenvalues of $\bU$ consist of complex-conjugate pairs. As for the original definition, we remove the real eigenvalues $\{\pm1\}$. Let $\mB$ and $\wt{\mB}$ be two $m$-dimensional subspaces of $\R^p$, and let $\bB\in\R^{p\times m}$ and $\wt{\bB}\in\R^{p\times m}$ some orthonormal basis of $\mB$ and $\wt{\mB}$ respectively. Then, there exists some matrix $\bU\in\R^{p\times p}$ with $\bU^\top\bU=\bI_{p}$ such that $\wt{\bB} = \bU\bB$. With the above, we define
\begin{align}\label{eq:dist.subsp}
    d(\bB,\wt{\bB}) = \inf_{\bU} \rho(\bU),\quad d(\mB,\wt{\mB}) = \inf_{\bB,\wt{\bB}} d(\bB,\wt{\bB}),
\end{align}
where the first infimum is taken over all possible orthonormal matrices such that $\wt{\bB} = \bU\bB$ and the second infimum is taken over all possible orthonormal bases $\bB,\wt{\bB}$ of $\mB,\wt{\mB}$. This is the same as Definition~2.2 by \cite{kuznetsov1997Pbot}, the only difference being the definition of spectral radius $\rho(\bU)$ in the previous display. It is important noticing that this distance is equivalent to the principal-angle defined in Equation~\eqref{eq:subsp.dist.angle}.

\begin{lemma}\label{lem:dist.perm}
    For some $1\leq m\leq p$, let $\mB\subseteq\R^p$ and $\wt{\mB}\subseteq\R^p$ be two orthogonal $m$-dimensional subspaces. Then, $d(\mB,\wt{\mB}) = \pi/2$.
\end{lemma}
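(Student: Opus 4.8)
The plan is to route through the equivalence—recorded in the excerpt immediately after the definition in Equation~\eqref{eq:dist.subsp}—between the Kuznetsov-type distance $d(\cdot,\cdot)$ and the principal-angle $\phi_{1}(\cdot,\cdot)$ of Equation~\eqref{eq:subsp.dist.angle}. Once one has $d(\mB,\wt{\mB})=\phi_{1}(\mB,\wt{\mB})$, the conclusion is immediate: since $\mB\bot\wt{\mB}$, Lemma~\ref{lem:subsp.dist}(ii) gives $\phi_{1}(\mB,\wt{\mB})=\pi/2$, hence $d(\mB,\wt{\mB})=\pi/2$. Alternatively one can reach $\phi_{1}(\mB,\wt{\mB})=\pi/2$ straight from Equation~\eqref{eq:subsp.dist.angle} by computing $\|\bU_{\wt{\mB}}-\bU_{\mB}\|_{op}$: for orthogonal subspaces this operator acts as $+\bI$ on $\wt{\mB}$, as $-\bI$ on $\mB$, and as $\bzero$ on $(\mB\oplus\wt{\mB})^{\bot}$, so its spectrum is $\{-1,0,+1\}$, its norm is exactly $1$, and $\arcsin(1)=\pi/2$.

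To keep the argument self-contained I would also verify the two bounds on $d(\mB,\wt{\mB})$ directly from \eqref{eq:dist.subsp}. For the upper bound: orthogonality of the two $m$-dimensional subspaces forces $p\geq 2m$, and for any orthonormal bases $\bB=[\bb_{1}|\cdots|\bb_{m}]$ of $\mB$ and $\wt{\bB}=[\wt{\bb}_{1}|\cdots|\wt{\bb}_{m}]$ of $\wt{\mB}$ the $2m$ vectors $\bb_{1},\ldots,\bb_{m},\wt{\bb}_{1},\ldots,\wt{\bb}_{m}$ are orthonormal. Let $\bU\in\R^{p\times p}$ act on each plane $\spa\{\bb_{i},\wt{\bb}_{i}\}$ as the quarter-turn $\bb_{i}\mapsto\wt{\bb}_{i}$, $\wt{\bb}_{i}\mapsto-\bb_{i}$, and as the identity on $(\mB\oplus\wt{\mB})^{\bot}$. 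Then $\bU^{\top}\bU=\bI_{p}$, $\wt{\bB}=\bU\bB$, and $\Sp(\bU)=\{i,-i,1\}$ with multiplicities $m,m,p-2m$, so $\rho(\bU)=\pi/2$; therefore $d(\bB,\wt{\bB})\leq\pi/2$, and taking infima, $d(\mB,\wt{\mB})\leq\pi/2$.

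For the matching lower bound I would take arbitrary orthonormal bases $\bB,\wt{\bB}$ and any orthogonal $\bU$ with $\wt{\bB}=\bU\bB$. Orthogonality of the subspaces gives $\bB^{\top}\bU\bB=\bB^{\top}\wt{\bB}=\bzero_{m\times m}$, so the symmetric part $S=\tfrac12(\bU+\bU^{\top})$ obeys $\bv^{\top}S\bv=0$ for every unit vector $\bv\in\mB$. Writing $\bU$ in real Schur form (block-diagonal with $1\times1$ blocks $\pm1$ and $2\times2$ rotation blocks $R(\omega_{k})$, $0<\omega_{k}<\pi$), $S$ is simultaneously block-diagonal with eigenvalues $+1$, $-1$, and $\cos\omega_{k}$ (the last with multiplicity two). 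Since $S$ cannot be positive definite (it vanishes on the nonzero subspace $\mB$), $\bU$ must carry either a rotation angle $\omega_{k}\geq\pi/2$ or a $-1$ eigenvalue, and in either case $\rho(\bU)\geq\pi/2$; hence $d(\mB,\wt{\mB})\geq\pi/2$, and with the previous paragraph $d(\mB,\wt{\mB})=\pi/2$.

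The step I expect to be the real obstacle is the last one—specifically, justifying under the bookkeeping convention of Equation~\eqref{eq:dist.subsp} (which strips the eigenvalues $\pm1$) that a $-1$ eigenvalue, or a rotation angle forced to the boundary, of the minimizing $\bU$ indeed contributes $\rho(\bU)\geq\pi/2$, and that no reflection-type $\bU$ can undercut the bound. This is precisely where I would not reprove things by hand but instead invoke the established identity $d(\mB,\wt{\mB})=\phi_{1}(\mB,\wt{\mB})$ of \cite{kuznetsov1997Pbot}—equivalently, that the infimum over $\bU$ is attained at the direct rotation, whose $\rho$ equals the largest principal angle—thereby reducing the whole statement to Lemma~\ref{lem:subsp.dist}(ii).
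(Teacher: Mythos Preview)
Your proposal is correct and takes a genuinely different route from the paper. The paper works directly from Equation~\eqref{eq:dist.subsp}: it completes $\bB,\wt{\bB}$ to an orthonormal basis of $\R^p$, argues that the minimizing $\bU$ fixes $(\mB\oplus\wt{\mB})^\bot$ up to sign, identifies the remaining action as a pairwise swap $\bv_i\leftrightarrow\wt{\bv}_i$, and reads off the spectrum as $\{\pm1,e^{i\pi/2}\}$. You instead route through the identity $d=\phi_1$ together with Lemma~\ref{lem:subsp.dist}(ii), with an explicit quarter-turn rotation as a self-contained upper-bound witness and a symmetric-part argument for the lower bound. Your construction is the cleaner of the two: the quarter-turn $\bb_i\mapsto\wt{\bb}_i$, $\wt{\bb}_i\mapsto-\bb_i$ genuinely has eigenvalues $\pm i$ (hence $\rho(\bU)=\pi/2$), whereas a literal transposition has spectrum $\{\pm1\}$ only and, under the open-interval convention in the definition of $\rho$, contributes no angle at all---so the paper's direct argument in fact brushes against the very reflection issue you isolate in your final paragraph. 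Falling back on the direct-rotation identity $d=\phi_1$ there is the right call and closes that gap rigorously.
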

\begin{proof}[Proof of Lemma~\ref{lem:dist.perm}]
    Let $\bB=(\bv_{1}|\cdots|\bv_{m})\in\R^{p\times m}$ and $\wt{\bB}=(\wt{\bv}_{1}|\cdots|\wt{\bv}_{m})\in\R^{p\times m}$ be any two orthonormal basis of $\mB$ and $\wt{\mB}$, respectively. By orthogonality, the vectors $\bv_{i}$ and $\wt{\bv}_{j}$ are linearly independent for all $i,j=1,\ldots,m$. Thus, there exist vectors $\bw_{1},\ldots,\bw_{p-2m}$ such that $(\bv_{1}|\cdots|\bv_{m}|\wt{\bv}_{1}|\cdots|\wt{\bv}_{m}|\bw_{1}|\cdots|\bw_{p-2m})\in\R^{p\times p}$ is an orthonormal basis of $\R^p$. Now, among all possible orthogonal transformations $\bU\in\R^{p\times p}$ such that $\wt{\bB}=\bU\bB$, the ones achieving the smallest $\rho(\bU)$ in Equation~\eqref{eq:dist.subsp} are those such that $\bU\bw_{i}=\pm\bw_{i}$ for all $i=1,\ldots,p-2m$. Any such matrix is then a pairwise permutation matrix that swaps positions of $\bv_{i}$ and $\wt{\bv}_{i}$ in the original basis of $\R^p$. The spectrum $\Sp(\bU)$ of such a matrix consists of the eigenvalue $\pm 1$ corresponding to the fixed points and the complex unit root $i = e^{i \pi/2}$. From Equation~\eqref{eq:dist.subsp}, we get
    \begin{align*}
        d\left(\mB,\wt{\mB} \right)  
        = \min_{\wt{\bB},\bB}\ \min_{\bU:\wt{\bB}=\bU\bB} \rho(\bU)
        = \min_{\wt{\bB},\bB}\ \min_{\bU:\wt{\bB}=\bU\bB} \max\left\{\left|\frac{\pi }{2}\right| : e^{i \pi /2}\in\Sp(\bU)\right\}
        = \frac{\pi}{2}.
    \end{align*}
\end{proof}

\begin{lemma}[Lemma~1 by~\cite{carpraux1994SotK}]
    Let $\bB,\wt{\bB}$ be orthonormal bases of $\mB,\wt{\mB}$ such that $\wt{\bB} = (\bI_{p} + \bDelta + O(\|\bDelta\|_{op}^2))\bB$ for some $\|\bDelta\|_{op}\ll1$. Then, at the first order in $\|\bDelta\|_{op}\ll1$, one has
    \begin{align*}
        d(\bB,\wt{\bB}) = \inf_{\bDelta} \|\bDelta\|_{op},
    \end{align*}
    where the infimum is taken over all matrices $\bDelta\in\R^{p\times p}$ such that $\|\bDelta\|_{op}\ll1$, $\bDelta^\top=-\bDelta$ and $\wt{\bB} = (\bI_{p} + \bDelta + O(\|\bDelta\|_{op}^2))\bB$.
\end{lemma}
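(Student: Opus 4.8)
The plan is to translate the infimum defining $d(\bB,\wt{\bB})$ in Equation~\eqref{eq:dist.subsp} into a statement about skew-symmetric generators via the matrix exponential, and then read off $\rho(\cdot)$ directly from the spectrum of the generator. Since we restrict to the regime $\|\bDelta\|_{op}\ll1$, only orthogonal matrices $\bU$ in a small neighbourhood of $\bI_{p}$ are relevant, and for those the exponential map $\bDelta\mapsto\exp(\bDelta)$ is a local diffeomorphism from the space of real skew-symmetric matrices onto a neighbourhood of $\bI_{p}$ in the orthogonal group. So any such $\bU$ can be written uniquely as $\bU=\exp(\bDelta)$ with $\bDelta^{\top}=-\bDelta$ and $\|\bDelta\|_{op}<\pi$, in which case $\bU=\bI_{p}+\bDelta+O(\|\bDelta\|_{op}^{2})$; conversely every skew-symmetric $\bDelta$ with $\|\bDelta\|_{op}\ll1$ arises this way.

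The first computation is that $\rho(\exp(\bDelta))=\|\bDelta\|_{op}$ whenever $\bDelta^{\top}=-\bDelta$ and $\|\bDelta\|_{op}<\pi$. A real skew-symmetric matrix is normal, so $\|\bDelta\|_{op}$ equals its spectral radius, and its spectrum consists of conjugate pairs $\pm i\mu_{1},\ldots,\pm i\mu_{k}$ together with zeros, where $\mu_{j}\geq0$ and $\max_{j}\mu_{j}=\|\bDelta\|_{op}$. Hence $\Sp(\exp(\bDelta))=\{e^{\pm i\mu_{j}}\}\cup\{1\}$, and since each $\mu_{j}\in[0,\pi)$ the definition of $\rho(\cdot)$ in Equation~\eqref{eq:dist.subsp}, which discards the real eigenvalues $\pm1$ and retains the largest argument in $(0,\pi)$, gives $\rho(\exp(\bDelta))=\max_{j}\mu_{j}=\|\bDelta\|_{op}$, exactly.

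It then remains to match the two constraint sets. If $\bU$ is orthogonal with $\wt{\bB}=\bU\bB$ and close to $\bI_{p}$, then $\bDelta=\log\bU$ is skew-symmetric, satisfies $\wt{\bB}=(\bI_{p}+\bDelta+O(\|\bDelta\|_{op}^{2}))\bB$, and has $\|\bDelta\|_{op}=\rho(\bU)$, so $\inf_{\bU}\rho(\bU)\geq\inf_{\bDelta}\|\bDelta\|_{op}$. For the reverse inequality, take any skew-symmetric $\bDelta$ with $\wt{\bB}=(\bI_{p}+\bDelta+O(\|\bDelta\|_{op}^{2}))\bB$ and set $\bU_{0}=\exp(\bDelta)$; then $\bU_{0}\bB$ and $\wt{\bB}$ are orthonormal $m$-frames differing by $O(\|\bDelta\|_{op}^{2})$, so there is an orthogonal $\bV=\bI_{p}+O(\|\bDelta\|_{op}^{2})$ with $\wt{\bB}=\bV\bU_{0}\bB$, and by the Baker--Campbell--Hausdorff expansion $\rho(\bV\bU_{0})=\|\log(\bV\bU_{0})\|_{op}\leq\|\bDelta\|_{op}+O(\|\bDelta\|_{op}^{2})$. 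Taking the infimum over admissible $\bDelta$ and invoking $d(\bB,\wt{\bB})=\inf_{\bU}\rho(\bU)$ gives the claimed first-order identity. The restriction to skew-symmetric $\bDelta$ costs nothing, since orthogonality of any completion $\bU=\bI_{p}+\bDelta+O(\|\bDelta\|_{op}^{2})$ forces $\bDelta+\bDelta^{\top}=O(\|\bDelta\|_{op}^{2})$, i.e.\ the symmetric part of $\bDelta$ is already second order.

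I expect the main obstacle to be exactly this last matching step: arguing cleanly that replacing the exact relation $\wt{\bB}=\bU\bB$ by the first-order relation $\wt{\bB}=(\bI_{p}+\bDelta+O(\|\bDelta\|_{op}^{2}))\bB$ leaves the value of the infimum unchanged at leading order, equivalently that the $O(\|\bDelta\|_{op}^{2})$ slack can always be absorbed by an $O(\|\bDelta\|_{op}^{2})$ rotation without perturbing $\rho$ at first order. Everything else — surjectivity of $\exp$ near the identity, normality of real skew-symmetric matrices, and the explicit description of $\rho$ in terms of eigenvalue arguments — is routine linear algebra and Lie-group bookkeeping.
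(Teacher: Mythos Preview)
The paper does not prove this lemma; it is quoted directly from \cite{carpraux1994SotK}, so there is no in-paper proof to compare against. Your approach --- parametrise orthogonal matrices near $\bI_{p}$ as $\bU=\exp(\bDelta)$ with $\bDelta$ skew-symmetric, read off $\rho(\bU)=\|\bDelta\|_{op}$ from the purely imaginary spectrum of $\bDelta$, and absorb the second-order slack in the backward direction by an $O(\|\bDelta\|_{op}^{2})$ rotation --- is the natural one and matches what the original reference does.

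One step is underjustified. You open by asserting that ``only orthogonal matrices $\bU$ in a small neighbourhood of $\bI_{p}$ are relevant,'' and your forward inequality $\inf_{\bU}\rho(\bU)\geq\inf_{\bDelta}\|\bDelta\|_{op}$ is argued only for such $\bU$. But the infimum in Equation~\eqref{eq:dist.subsp} runs over \emph{all} orthogonal $\bU$ with $\wt{\bB}=\bU\bB$, and even when $\wt{\bB}$ is close to $\bB$ there exist admissible $\bU$ far from the identity --- for instance, take the required small rotation on the span of the columns of $\bB$ and compose with $-\bI$ on a complementary direction. You still owe an argument that these competitors never yield a strictly smaller $\rho$. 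The fix is short: since $\wt{\bB}=\bU\bB$ with $\wt{\bB}$ close to $\bB$, the off-diagonal blocks of $\bU$ relative to the splitting $\mR(\bB)\oplus\mR(\bB)^{\bot}$ are $O(\|\bDelta\|_{op})$, so $\bU$ is, to first order, block-diagonal; the complementary block contributes only eigenvalues that are either real (discarded by the definition of $\rho$) or paired complex roots whose arguments cannot be smaller than those already present in the near-identity block acting on $\mR(\bB)$. Either this observation or an explicit restriction to the connected component $SO(p)$ (on which $\exp$ is surjective) closes the gap.
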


We always consider $\bA\in\R^{p\times p}$ some symmetric and positive semi-definite matrix and $\bb\in\mR(\bA)$ some vector, together with a Krylov space $\mK_{m}=\mK_{m}(\bA,\bb)$ of full dimension, that is $m=\dim(\mK_{m}(\bA,\bb))$ for some $1\leq m\leq p$. We are interested in all perturbations $\wt{\bA}=\bA+\wt{\Delta\bA}\in\R^{p\times p}$ some symmetric and positive semi-definite matrix and $\wt{\bb}=\bb+\wt{\Delta\bb}\in\mR(\wt{\bA})$ some vector such that the perturbed Krylov space $\wt{\mK}_{m}=\mK_{m}(\wt{\bA},\wt{\bb})$ has full dimension, that is $m=\dim(\mK_{m}(\wt{\bA},\wt{\bb}))$. We denote $\bK_{m}\in\R^{p\times m}$ and $\wt{\bK}_{m}\in\R^{p\times m}$ any natural orthonormal bases of $\mK_{m}(\bA,\bb)$ and $\mK_{m}(\wt{\bA},\wt{\bb})$, in the sense of Definition~2 by \cite{carpraux1994SotK}. Notice that these bases are unique up to their signs and can be computed, for example, with the Arnoldi iteration devised by \cite{Arnoldi1951}. When $\wt{\bA}=\bA$ and $\wt{\bb}=\bb$, one can always find some orthonormal matrix $\bU\in\R^{p\times p}$ having spectrum $\Sp(\bU)=\{\pm 1\}$ and such that $\wt{\bK}_{m}=\bU\bK_{m}$, so that $d(\bK_{m},\wt{\bK}_{m})=0$ even though $\bK_{m}\neq\wt{\bK}_{m}$ (the identity holds up to the signs of the columns). For arbitrary $\wt{\bA}$ and $\wt{\bb}$ we denote the size of the perturbation by $\Delta(\wt{\bA},\wt{\bb}) = \frac{\|\wt{\bA}-\bA\|_{op}}{\|\bA\|_{op}} \vee \frac{\|\wt{\bb}-\bb\|_{2}}{\|\bb\|_{2}}$ and define
\begin{align}\label{eq:krylov.cond}
    \kappa_{b}(\bK_{m}) &= \inf_{\eps>0}\ \sup_{(\wt{\bA},\wt{\bb})\ :\ \Delta(\wt{\bA},\wt{\bb})\leq\eps} \frac{d(\bK_{m},\wt{\bK}_{m})}{\Delta(\wt{\bA},\wt{\bb})},\quad \kappa_{2}(\mK_{m}(\bA,\bb)) = \min_{\bK_{m}} \kappa_{b}(\bK_{m}).
\end{align}
Since Krylov spaces are invariant under orthonormal transformations, there exist $\bV_{m} = (\bK_{m}|\bK_{m^\bot})\in\R^{p\times p}$ and $\wt{\bV}_{m} = (\wt{\bK}_{m}|\wt{\bK}_{m^\bot})\in\R^{p\times p}$ both orthonormal bases of $\R^p$ such that $\bG_{m}=\bV_{m}^\top\bK_{m}$ and $\wt{\bG}_{m}=\wt{\bV}_{m}^\top\wt{\bK}_{m}$ are the natural orthonormal bases of $\mK_{m}(\bH_{m},\be_{1})$ and $\mK_{m}(\wt{\bH}_{m},\be_{1})$ with $\be_{1}\in\R^p$ the first vector of the canonical basis and both $\bH_{m}=\bV_{m}^\top\bA\bV_{m}\in\R^{p\times p}$, $\wt{\bH}_{m}=\wt{\bV}_{m}^\top\wt{\bA}\wt{\bV}_{m}\in\R^{p\times p}$ tridiagonal symmetric (Hessenberg symmetric) matrices. In particular, one can always reduce the problem to perturbations of Krylov spaces having same vector $\wt{\bb}=\bb$ and tridiagonal symmetric matrices. This gives the equivalent definition
\begin{align*}
    \kappa_{b}(\bG_{m}) &= \inf_{\eps>0}\ \sup_{\wt{\bH} :\ \Delta(\wt{\bH})\leq\eps} \frac{d(\bG_{m},\wt{\bG}_{m})}{\Delta(\wt{\bH})},\quad \kappa_{2}(\mK_{m}(\bH,\be_{1})) = \min_{\bG_{m}} \kappa_{b}(\bG_{m}),
\end{align*}
corresponding to Definition~3 by ~\cite{carpraux1994SotK}, for which the reduction to the Hessenberg case (tridiagonal symmetric for us) is given in their Theorem~1.

\begin{theorem}[Theorem~3.1 by~\cite{kuznetsov1997Pbot}]\label{thm:krylov.cauchy}
    Let $\bH\in\R^{p\times p}$ be a symmetric tridiagonal matrix and $m=\dim(\mK_{m}(\bH,\be_{1}))$ for some $1\leq m\leq p$. Assume $\bH(t)\in\R^{p\times p}$ is a continuously differentiable matrix function such that
    \begin{align*}
        \bH(0) = \bH,\quad \left\|\frac{d\bH(t)}{dt}\right\|_{op} \leq \nu \|\bH(t)\|_{op},
    \end{align*}
    for some $0<\nu<1$. Let $\bV_{m}(t)\in\R^{p\times p}$ be the orthogonal matrix defined as the solution of the Cauchy-problem in Equations~(3.1)~-~(3.3) by \cite{kuznetsov1997Pbot}. For all $\bG_{m}\in\R^{p\times m}$ natural orthonormal bases of $\mK_{m}(\bH,\be_{1})$ and
    \begin{align*}
        0 \leq t \leq \frac{1}{16 \nu \kappa_{b}(\bG_{m}) (\kappa_{b}(\bG_{m})+1)},
    \end{align*}
    one has 
    \begin{align*}
        \|\bV_{m}^\top(t)\bG_{m} - \bG_{m}\|_{op} \leq 2 \kappa_{b}(\bG_{m}) \nu t.
    \end{align*}
\end{theorem}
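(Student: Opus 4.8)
The plan is to read this as a transcription of Theorem~3.1 of \cite{kuznetsov1997Pbot} from the Frobenius norm into the operator norm. As already remarked, that theory invokes the ambient matrix norm only through its unitary invariance, so once this is checked every estimate in \cite{kuznetsov1997Pbot} transfers verbatim with $\|\cdot\|_{F}$ replaced by $\|\cdot\|_{op}$; the substantive work is then to reproduce the underlying differential-perturbation argument, which I sketch below.

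First I would recall the structure of the Cauchy problem defining $\bV_{m}(t)$: it is the orthogonal path with $\bV_{m}(0)=\bI_{p}$ and $\dot{\bV}_{m}(t)=\bV_{m}(t)\bm{\Omega}(t)$, where $\bm{\Omega}(t)$ is skew-symmetric and is pinned down uniquely, modulo the free block acting inside $\mK_{m^{\bot}}$ (the source of the sign ambiguity of natural bases), by the requirement that $\bV_{m}^{\top}(t)\bH(t)\bV_{m}(t)$ stay in tridiagonal (Lanczos) form for all $t$. Writing $\bE(t)=\bV_{m}^{\top}(t)\bG_{m}-\bG_{m}$ and differentiating gives the linear initial-value problem $\dot{\bE}(t)=-\bm{\Omega}(t)\bG_{m}-\bm{\Omega}(t)\bE(t)$ with $\bE(0)=\bzero_{p\times m}$, so that $\|\bE(t)\|_{op}$ is controlled as soon as $\|\bm{\Omega}(t)\bG_{m}\|_{op}$ is.

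The key step is the estimate $\|\bm{\Omega}(t)\bG_{m}\|_{op}\le\kappa_{b}(\bG_{m})\,\nu\,(1+o(1))$ on the admissible $t$-window. This is exactly what the first-order condition number in Equation~\eqref{eq:krylov.cond} encodes: the forcing term $\dot{\bH}(t)$ has relative size at most $\nu$, and $\kappa_{b}(\bG_{m})$ bounds the rate at which the natural Krylov basis rotates per unit of relative perturbation, so one checks that the component of the generator acting on the Krylov block, $\bm{\Omega}(t)\bG_{m}$, is governed by this quantity up to a correction that is quadratic in $\|\bE(t)\|_{op}$, hence small. I would then close with a continuity/bootstrap argument: assume $\|\bE(s)\|_{op}\le 2\kappa_{b}(\bG_{m})\nu s$ on a maximal subinterval of $[0,t]$, feed this back into the ODE to dominate $\|\dot{\bE}(s)\|_{op}$, integrate (Gr\"onwall), and verify that the resulting bound is strictly below the assumed one throughout, which happens precisely when $t\le\bigl(16\,\nu\,\kappa_{b}(\bG_{m})(\kappa_{b}(\bG_{m})+1)\bigr)^{-1}$; the quadratic factor $\kappa_{b}(\kappa_{b}+1)$ and the numerical constant $16$ come from bounding the correction term that the bootstrap has to absorb.

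The main obstacle is that $\kappa_{b}(\bG_{m})$ is an \emph{asymptotic} sensitivity constant, licensed only for perturbations of size tending to zero, hence not directly applicable to a finite perturbation. The way around this is precisely the passage to the differential description $\bH(t)$, accumulating the effect infinitesimally, with the bootstrap guaranteeing that $\bE(t)$ never leaves the regime in which the first-order estimate is legitimate; the stated threshold on $t$ is the largest window on which this self-consistency loop closes. The full bookkeeping --- the exact Cauchy problem (3.1)--(3.3), the reduction to the tridiagonal case, and the constant-chasing --- is carried out in \cite{kuznetsov1997Pbot}.
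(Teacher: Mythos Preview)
Your proposal is correct and takes essentially the same approach as the paper: both treat the result as a transcription of Kuznetsov's Theorem~3.1 from the Frobenius to the operator norm, relying on the fact that the original argument only uses unitary invariance. The paper's proof is terser---it pinpoints the single equation (their Eq.~(3.7), defining $\delta_{m}(t)$ and $\rho_{m}(t)$) that needs to be rewritten in operator norm and then defers entirely to the original reference---whereas you give a broader narrative of the differential/bootstrap mechanism; but the substance is identical.
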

\begin{proof}[Proof of Theorem~\ref{thm:krylov.cauchy}]
    We slightly adapt the proof of Theorem~3.1 by~\cite{kuznetsov1997Pbot}. Their Equation~(3.7) becomes for us
    \begin{align*}
        \delta_{m}(t) = 2\sin\left(\frac{\int_{0}^\top \|\bX_{m}(\xi)\|_{op}d\xi}{2}\right),\quad
        \rho_{m}(t) = \left\{2\delta_{m}(t) + t\nu [1+\delta_{m}(t)] \right\} \|\bH\|_{op}.
    \end{align*}
    Using that $\|\bH(t)-\bH\|_{op} = \|\int_{0}^\top \frac{d}{d\xi}\bX(\xi)d\xi\|_{op} \leq t\nu\|\bH\|_{op}$, one recovers their bound $\|\bZ_{m}(t) - \bH\|_{op} \leq \rho_{m}(t)$. The remainder of the proof proceeds as in the original reference.
\end{proof}

\begin{theorem}[Theorem~3.3 by~\cite{kuznetsov1997Pbot}]\label{thm:krylov.pert}
    Let $\bA\in\R^{p\times p}$ be some symmetric and positive semi-definite matrix and $\bb\in\mR(\bA)$ some vector. Let $\wt{\bA}=\bA+\wt{\Delta\bA}\in\R^{p\times p}$ be some symmetric and positive semi-definite matrix and $\wt{\bb}=\bb+\wt{\Delta\bb}\in\mR(\wt{\bA})$ some vector. Let $\bK_{m}\in\R^{p\times m}$ be any natural orthonormal basis of $\mK_{m}=\mK_{m}(\bA,\bb)$ for some $1\leq m\leq p$. Assume that $m = \dim(\mK_{m}(\bA,\bb)) = \dim(\mK_{m}(\wt{\bA},\wt{\bb}))$ and 
    \begin{align*}
        \frac{\|\wt{\Delta\bA}\|_{op}}{\|\bA\|_{op}} \leq \eps,\quad \frac{\|\wt{\Delta\bb}\|_{2}}{\|\bb\|_{2}} \leq \eps,\quad 0 \leq \eps \leq \frac{1}{64 \kappa_{b}(\bK_{m}) (\kappa_{b}(\bK_{m}) + 1)}.
    \end{align*}
    Then, there exists a natural orthonormal basis $\wt{\bK}_{m}\in\R^{p\times m}$ of $\wt{\mK}_{m}=\mK_{m}(\wt{\bA},\wt{\bb})$ such that
    \begin{align*}
        \|\wt{\bK}_{m}-\bK_{m}\|_{op} \leq 11  \kappa_{b}(\bK_{m}) \eps.
    \end{align*}
\end{theorem}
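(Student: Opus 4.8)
The plan is to pass to Lanczos tridiagonal normal form, join $(\bA,\bb)$ to $(\wt\bA,\wt\bb)$ by an explicit straight-line homotopy, and then read off the finite-time estimate of Theorem~\ref{thm:krylov.cauchy}. As a preliminary reduction, since Krylov spaces, their natural orthonormal bases, the distance $d(\cdot,\cdot)$ and the condition number $\kappa_{b}(\cdot)$ are all invariant under an orthonormal change of coordinates, I would assume $\bb=\|\bb\|_{2}\be_{1}$ and fix an orthonormal $\bV_{m}\in\R^{p\times p}$ with $\bV_{m}\be_{1}=\be_{1}$ and $\bH:=\bV_{m}^\top\bA\bV_{m}$ tridiagonal symmetric (the Lanczos tridiagonalization of $\bA$ started from $\bb$). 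Then $\bG_{m}=[\be_{1}|\cdots|\be_{m}]$ is, up to column signs, a natural orthonormal basis of $\mK_{m}(\bH,\be_{1})$, the matrix $\bK_{m}:=\bV_{m}\bG_{m}$ is a natural orthonormal basis of $\mK_{m}(\bA,\bb)$, and by the equivalence of the two formulations of the condition number recorded after Theorem~\ref{thm:krylov.cauchy} (following Carpraux--Godunov--Kuznetsov, Theorem~1) one has $\kappa_{b}(\bG_{m})\le\kappa_{b}(\bK_{m})$. The perturbation of the starting vector I would absorb by an orthonormal rotation: pick $\bQ$ with $\bQ^\top\wt\bb=\|\wt\bb\|_{2}\be_{1}$; the bound $\|\wt\bb-\bb\|_{2}\le\eps\|\bb\|_{2}$ forces the angle between $\wt\bb$ and $\be_{1}$ to be at most $\arcsin\eps$, hence $\|\bQ-\bI_{p}\|_{op}\le c_{1}\eps$ for an explicit small $c_{1}$, and since Krylov spaces are scale-invariant in the starting vector, $\mK_{m}(\wt\bA,\wt\bb)=\bQ\,\mK_{m}(\bA',\be_{1})$ with $\bA':=\bQ^\top\wt\bA\bQ\in\R_{\succeq0}^{p\times p}$ and $\|\bA'-\bA\|_{op}\le\|\wt\bA-\bA\|_{op}+2\|\bQ-\bI_{p}\|_{op}\|\bA\|_{op}\le c_{2}\eps\|\bA\|_{op}$. (One may instead keep the starting vector moving and invoke the form of the Cauchy problem behind Theorem~\ref{thm:krylov.cauchy} in which the first vector also varies; the two routes differ only in the numerical constant they demand of the hypothesis.) Either way one is left with a pure matrix perturbation: comparing the natural bases of $\mK_{m}(\bA,\be_{1})=\mK_{m}(\bA,\bb)$ and $\mK_{m}(\bA',\be_{1})$.

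For the homotopy I would take $\bA(t):=(1-t)\bA+t\bA'$ and $\bH(t):=\bV_{m}^\top\bA(t)\bV_{m}$, $t\in[0,1]$. Each $\bA(t)$ is symmetric positive semidefinite, being a convex combination of such matrices; $\bH(0)=\bH$ is tridiagonal; $\|\tfrac{d}{dt}\bH(t)\|_{op}=\|\bA'-\bA\|_{op}\le c_{2}\eps\|\bA\|_{op}$; and $\|\bH(t)\|_{op}=\|\bA(t)\|_{op}\ge(1-c_{2}\eps)\|\bA\|_{op}\ge\tfrac12\|\bA\|_{op}$, so that $\|\tfrac{d}{dt}\bH(t)\|_{op}\le\nu\,\|\bH(t)\|_{op}$ with $\nu:=2c_{2}\eps\in(0,1)$. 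The smallness hypothesis $\eps\le\big(64\,\kappa_{b}(\bK_{m})(\kappa_{b}(\bK_{m})+1)\big)^{-1}$ should then guarantee both that $1\le\big(16\,\nu\,\kappa_{b}(\bG_{m})(\kappa_{b}(\bG_{m})+1)\big)^{-1}$, so that $t=1$ lies in the admissible range of Theorem~\ref{thm:krylov.cauchy}, and that the whole segment $\{\bA(t):t\in[0,1]\}$ stays inside the ``safe radius'' of order $\kappa_{b}(\bK_{m})^{-1}$ on which $\dim\mK_{m}(\bA(t),\be_{1})$ cannot drop below $m$ (consistently with the standing assumption $\dim\mK_{m}(\wt\bA,\wt\bb)=m$) and the Cauchy problem is well posed on $[0,1]$. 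Applying Theorem~\ref{thm:krylov.cauchy} at $t=1$ then yields an orthonormal $\bV_{m}(1)$ with $\bV_{m}(1)\be_{1}=\be_{1}$, whose first $m$ columns span $\mK_{m}(\bH(1),\be_{1})$, and with $\|\bV_{m}^\top(1)\bG_{m}-\bG_{m}\|_{op}\le 2\,\kappa_{b}(\bG_{m})\,\nu$.

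It remains to translate back. Since $\bV_{m}\be_{1}=\be_{1}$, the matrix $\bV_{m}\bV_{m}(1)\bG_{m}$ is a natural orthonormal basis of $\mK_{m}(\bA',\be_{1})$, so $\wt\bK_{m}:=\bQ\,\bV_{m}\bV_{m}(1)\bG_{m}$ is a natural orthonormal basis of $\mK_{m}(\wt\bA,\wt\bb)$ — the sign ambiguity of natural bases being harmless, since only the existence of one good $\wt\bK_{m}$ is claimed. Using orthonormality of $\bQ,\bV_{m},\bV_{m}(1)$ and that basis matrices have orthonormal columns,
\begin{align*}
\|\wt\bK_{m}-\bK_{m}\|_{op}
&\le \|(\bQ-\bI_{p})\bV_{m}\bV_{m}(1)\bG_{m}\|_{op}+\|\bV_{m}\big(\bV_{m}(1)\bG_{m}-\bG_{m}\big)\|_{op}\\
&\le \|\bQ-\bI_{p}\|_{op}+\|\bV_{m}^\top(1)\bG_{m}-\bG_{m}\|_{op}
\le c_{1}\,\eps+4c_{2}\,\kappa_{b}(\bK_{m})\,\eps ,
\end{align*}
and since $\kappa_{b}(\bK_{m})\ge1$, tracking the explicit values of $c_{1},c_{2}$ should collapse this to $\|\wt\bK_{m}-\bK_{m}\|_{op}\le 11\,\kappa_{b}(\bK_{m})\,\eps$.

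The hard part, beyond routine manipulation, is twofold. First, one must certify that $\dim\mK_{m}(\bA(t),\be_{1})=m$ for every $t\in[0,1]$: this rests on the fact that finiteness of $\kappa_{b}(\bK_{m})$ supplies a quantitative lower bound of order $\kappa_{b}(\bK_{m})^{-1}$ on the size of a perturbation capable of collapsing the Krylov dimension, which is precisely the origin of the quadratic factor $\kappa_{b}(\kappa_{b}+1)$ in the hypothesis and must then be reconciled with the merely linear $\kappa_{b}$ in the conclusion. Second is the constant arithmetic: threading $c_{1}$, $c_{2}$, the factor $\tfrac12$ from $\|\bA(t)\|_{op}\ge\tfrac12\|\bA\|_{op}$, and the $\tfrac1{16}$ of Theorem~\ref{thm:krylov.cauchy} through the reductions so that the stated hypothesis $\eps\le(64\,\kappa_{b}(\kappa_{b}+1))^{-1}$ is exactly what is needed and the output constant does not exceed $11$ — this is the delicate calibration that, for instance, dictates the choice between the vector-rotation reduction and the moving-vector Cauchy problem above. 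Both matters go through essentially as in Kuznetsov's original proof of Theorem~3.3; the only departure here is that every estimate is carried in the operator norm instead of the Frobenius norm, which is legitimate because the orthogonal-matrix perturbation bounds and the Cauchy problem used along the way are all unitarily invariant.
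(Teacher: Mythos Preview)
Your proposal is correct and follows essentially the same route as the paper: reduce to the tridiagonal (Hessenberg) normal form, run a straight-line homotopy on the matrix, invoke Theorem~\ref{thm:krylov.cauchy} at $t=1$, and pull back via a triangle inequality. The only cosmetic difference is that the paper absorbs the starting-vector perturbation directly into the choice of a second Hessenberg basis $\wt{\bV}$ close to $\bV$ (with $\|\bV-\wt{\bV}\|\le\sqrt{2}\,\eps$), rather than via your separate rotation $\bQ$; this packaging is what lets the paper take $\nu=4\eps$ and hit the constants $64$ and $11$ on the nose, whereas your $c_{1},c_{2}$ route would need slightly more care to match them exactly.
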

\begin{proof}[Proof of Theorem~\ref{thm:krylov.pert}]
    We slightly improve the proof of Theorem~3.3 by \cite{kuznetsov1997Pbot}. Define the continuously differentiable matrix function $\bA(t)=\bA+(\wt{\bA}-\bA)t\in\R^{p\times p}$ so that $\bA(0)=\bA$ and $\bA(1)=\wt{\bA}$ and $\|\frac{d\bA(t)}{dt}\|_{op} = \|\wt{\Delta\bA}\|_{op} \leq \eps \|\bA\|_{op}$. One can find $\bV\in\R^{p\times p}$ and $\wt{\bV}\in\R^{p\times p}$ orthonormal matrices such that $\|\bV-\wt{\bV}\|_{2}\leq \sqrt{2}\eps$ and define Hessenberg matrices $\bH = \bV^\top\bA(0)\bV$ and $\wt{\bH} = \wt{\bV}^\top\bA(1)\wt{\bV}$. One can check that $\|\wt{\bH}-\bH\|_{op} \leq (2\sqrt{2}+1)\|\bH\|_{op}\eps \leq 4\|\bH\|_{op}\eps$. The assumptions of Theorem~\ref{thm:krylov.cauchy} hold for $t=1$ and $\nu=4\eps$, thus, with suitable bases $\bG_{m}$ and $\wt{\bG}_{m}$ of $\mK_{m}(\bH,\be_{1})$ and $\mK_{m}(\wt{\bH},\be_{1})$ one has $\|\wt{\bG}_{m}-\bG_{m}\|_{op}\leq 8 \kappa_{b}(\bG_{m}) \eps$. One concludes the proof for the bases $\bK_{m}$ and $\wt{\bK}_{m}$ by writing
    \begin{align*}
        \|\wt{\bK}_{m}-\bK_{m}\|_{op} = \|\wt{\bV}\wt{\bG}_{m}-\bV\bG_{m}\|_{op} \leq \|\bV-\wt{\bV}\|_{2} + \|\wt{\bG}_{m}-\bG_{m}\|_{op} \leq 11 \kappa_{b}(\bK_{m}) \eps.
    \end{align*}
\end{proof}

The next result is a combination of the proof of Theorem~3.3 by~\cite{kuznetsov1997Pbot}, together with one of its corollaries.
\begin{corollary}[Corollary~2 by~\cite{kuznetsov1997Pbot}]\label{cor:krylov.pert}
    Under assumptions of Theorem~\ref{thm:krylov.pert}, let $\bA_{m}=\bK_{m}^\top\bA\bK_{m}\in\R^{m\times m}$, $\bb_{m}=\bK_{m}^\top\bb\in\R^m$ be the projected matrix and vector relative to the orthonormal basis $\bK_{m}\in\R^{p\times m}$ and $\wt{\bA}_{m}=\wt{\bK}_{m}^\top\wt{\bA}\wt{\bK}_{m}\in\R^{m\times m}$, $\wt{\bb}_{m}=\wt{\bK}_{m}^\top\wt{\bb}\in\R^m$ be the projected matrix and vector relative to the orthonormal basis $\wt{\bK}_{m}\in\R^{p\times m}$. Then,
    \begin{align*}
        \frac{\|\wt{\bb}_{m}-\bb_{m}\|_{2}}{\|\bb_{m}\|_{2}} \leq 2 \eps,\quad
        \frac{\|\wt{\bA}_{m}-\bA_{m}\|_{op}}{\|\bA_{m}\|_{op}} \leq 24 \cdot \kappa_{b}(\bK_{m}) \|\bA\|_{op} \|\bA_{m}\|_{op}^{-1} \eps.
    \end{align*}
\end{corollary}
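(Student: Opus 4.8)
The plan is to derive both estimates from the single perturbation bound $\|\wt{\bK}_{m}-\bK_{m}\|_{op}\le 11\,\kappa_{b}(\bK_{m})\,\eps$ furnished by Theorem~\ref{thm:krylov.pert}, together with the hypotheses $\|\wt{\Delta\bA}\|_{op}\le\eps\|\bA\|_{op}$, $\|\wt{\Delta\bb}\|_{2}\le\eps\|\bb\|_{2}$ and the smallness $\kappa_{b}(\bK_{m})\,\eps\le 1/128$, which follows from $\eps\le 1/\{64\,\kappa_{b}(\bK_{m})(\kappa_{b}(\bK_{m})+1)\}$ and $\kappa_{b}(\bK_{m})\ge 1$. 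In particular $\|\wt{\bK}_{m}-\bK_{m}\|_{op}\le 11/128<1/10$, which is the numerical slack I would use repeatedly.

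For the vector bound I would first invoke the structure of a natural orthonormal basis of a Krylov space: its first column is proportional to the generating vector (the Arnoldi iteration initializes exactly there) and the remaining columns are orthogonal to it. Hence $\bK_{m}^{\top}\bb=\|\bb\|_{2}\be_{1}$ and $\wt{\bK}_{m}^{\top}\wt{\bb}=\sigma\|\wt{\bb}\|_{2}\be_{1}$ with $\sigma\in\{-1,+1\}$ and $\be_{1}\in\R^{m}$; comparing the first columns and using $\|\wt{\bK}_{m}-\bK_{m}\|_{op}<1/10$ forces $\sigma=+1$. Since $\bb\in\mK_{m}(\bA,\bb)$ and $\wt{\bb}\in\mK_{m}(\wt{\bA},\wt{\bb})$, the orthogonal projections $\bK_{m}\bK_{m}^{\top}$ and $\wt{\bK}_{m}\wt{\bK}_{m}^{\top}$ fix $\bb$ and $\wt{\bb}$ respectively, so $\|\bb_{m}\|_{2}=\|\bb\|_{2}$. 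Then $\wt{\bb}_{m}-\bb_{m}=(\|\wt{\bb}\|_{2}-\|\bb\|_{2})\be_{1}$, hence
\[
	\|\wt{\bb}_{m}-\bb_{m}\|_{2}=\big|\,\|\wt{\bb}\|_{2}-\|\bb\|_{2}\,\big|\le\|\wt{\Delta\bb}\|_{2}\le\eps\,\|\bb\|_{2}=\eps\,\|\bb_{m}\|_{2}\le 2\eps\,\|\bb_{m}\|_{2},
\]
which is the first claim (this is essentially Corollary~2 of \cite{kuznetsov1997Pbot}).

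For the matrix bound I would use the splitting
\[
	\wt{\bA}_{m}-\bA_{m}=\wt{\bK}_{m}^{\top}\,\wt{\Delta\bA}\,\wt{\bK}_{m}+\big(\wt{\bK}_{m}^{\top}\bA\,\wt{\bK}_{m}-\bK_{m}^{\top}\bA\,\bK_{m}\big),
\]
bounding the first term by $\|\wt{\Delta\bA}\|_{op}\le\eps\|\bA\|_{op}$ because $\wt{\bK}_{m}$ has orthonormal columns, and expanding the second term in the increment $\wt{\Delta\bK}=\wt{\bK}_{m}-\bK_{m}$ as $\bK_{m}^{\top}\bA\,\wt{\Delta\bK}+\wt{\Delta\bK}^{\top}\bA\,\bK_{m}+\wt{\Delta\bK}^{\top}\bA\,\wt{\Delta\bK}$, whose operator norm is at most $\|\bA\|_{op}\,\|\wt{\Delta\bK}\|_{op}\,(2+\|\wt{\Delta\bK}\|_{op})\le 11\,\kappa_{b}(\bK_{m})\,\eps\,(2+1/10)\,\|\bA\|_{op}\le 23\,\kappa_{b}(\bK_{m})\,\eps\,\|\bA\|_{op}$. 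Adding the two contributions and using $\kappa_{b}(\bK_{m})\ge 1$ gives $\|\wt{\bA}_{m}-\bA_{m}\|_{op}\le\{1+23\,\kappa_{b}(\bK_{m})\}\,\eps\,\|\bA\|_{op}\le 24\,\kappa_{b}(\bK_{m})\,\eps\,\|\bA\|_{op}$; dividing by $\|\bA_{m}\|_{op}$ (which is positive, since the $(1,1)$ entry of $\bA_{m}$ equals $\bb^{\top}\bA\bb/\|\bb\|_{2}^{2}>0$ as $\bb\in\mR(\bA)\setminus\{\bzero_{p}\}$) yields the second claim.

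No step is conceptually hard; the work is in bookkeeping. I expect the only genuinely delicate point to be the sign normalization $\sigma=+1$ of the natural basis $\wt{\bK}_{m}$ of the perturbed Krylov space — without it the vector bound fails completely, since the difference could be of order $2\|\bb_{m}\|_{2}$ — and I would settle it exactly by the column-comparison argument above, leveraging that Theorem~\ref{thm:krylov.pert} delivers a natural basis close to $\bK_{m}$. The secondary nuisance is keeping the numerical constants $2$ and $24$ intact, which is precisely what the normalization $\kappa_{b}(\bK_{m})\,\eps\le 1/128$ buys.
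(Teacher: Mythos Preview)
The paper does not supply its own proof of this corollary; it is quoted verbatim as Corollary~2 of \cite{kuznetsov1997Pbot} and left unproved, so there is nothing to compare against directly. Your argument is correct and is the natural route: both estimates follow from the basis perturbation $\|\wt{\bK}_{m}-\bK_{m}\|_{op}\le 11\,\kappa_{b}(\bK_{m})\,\eps$ of Theorem~\ref{thm:krylov.pert} together with the standing hypotheses. The vector bound relies on the structural fact that a natural (Arnoldi) basis has first column $\pm\bb/\|\bb\|_{2}$, which collapses $\bb_{m}$ and $\wt{\bb}_{m}$ to multiples of $\be_{1}$; your sign-matching via $\|\wt{\bK}_{m}-\bK_{m}\|_{op}<1/10$ is exactly the right fix. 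The matrix bound is a clean triangle-inequality expansion, and the arithmetic $11\cdot(2+11/128)<23$, $1+23\kappa_{b}\le 24\kappa_{b}$ checks out.

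The only tacit assumption to flag is $\kappa_{b}(\bK_{m})\ge 1$, which you invoke twice (to get $\kappa_{b}\eps\le 1/128$ from $\kappa_{b}+1\ge 2$, and to absorb the additive $1$ into $24\kappa_{b}$). This is not literally enforced by the raw definition in Equation~\eqref{eq:krylov.cond}, though it is harmless in context: the paper's own stability constants in Equation~\eqref{eq:A.b.alg.C.D} are defined as $1\vee\sup(\cdots)$ precisely to sidestep this, and the original Kuznetsov constants are used the same way.
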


\begin{lemma}[Lemma~3.1 by~\cite{kuznetsov1997Pbot}]\label{lem:krylov.cond}
    Under assumptions of Theorem~\ref{thm:krylov.pert}, $m=\dim(\mK(\bA,\bb))$ implies $\wt{m}=\dim(\mK(\wt{\bA},\wt{\bb}))\geq m$ and, for all $m < s \leq p$,
    \begin{align*}
        \kappa_{2}(\mK_{s}(\bA,\bb)) = +\infty,\quad \kappa_{2}(\mK_{s}(\wt{\bA},\wt{\bb})) \geq 
        \frac{1}{\eps \left\{14 + 56 \kappa_{2}(\mK_{m}(\bA,\bb)) \right\}}.
    \end{align*}
\end{lemma}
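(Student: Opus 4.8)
The three assertions are of very different nature, so I would prove them in order, adapting the argument of Lemma~3.1 of \cite{kuznetsov1997Pbot} from the Frobenius norm to the operator norm (exactly as was done above for Theorems~\ref{thm:krylov.cauchy}--\ref{thm:krylov.pert}). The two "soft" claims first. For $\wt m\geq m$: the hypotheses of Theorem~\ref{thm:krylov.pert} that we import already include $\dim(\mK_m(\wt\bA,\wt\bb))=m$ --- equivalently, the Gram determinant of $\{\wt\bb,\wt\bA\wt\bb,\dots,\wt\bA^{m-1}\wt\bb\}$ is continuous in $(\wt\bA,\wt\bb)$ and nonzero at $(\bA,\bb)$, where those vectors span the $m$-dimensional space $\mK(\bA,\bb)$, so it stays nonzero for perturbations of the admitted size --- whence $\wt m=\dim(\mK(\wt\bA,\wt\bb))\geq\dim(\mK_m(\wt\bA,\wt\bb))=m$. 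For $\kappa_2(\mK_s(\bA,\bb))=+\infty$ when $m<s\leq p$: since $m$ is the maximal Krylov dimension of $(\bA,\bb)$, one has $\mK_s(\bA,\bb)=\mK(\bA,\bb)$, of dimension $m<s$, so it is not of full dimension $s$, and the Krylov condition number of Equation~\eqref{eq:krylov.cond} is $+\infty$ for rank-deficient Krylov subspaces by the convention of the framework.

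\textbf{Reduction and near-invariance.} Fix $m<s\leq p$. If $\wt m<s$ then $\mK_s(\wt\bA,\wt\bb)=\mK(\wt\bA,\wt\bb)$ has dimension $\wt m<s$, so $\kappa_2(\mK_s(\wt\bA,\wt\bb))=+\infty$ and the bound is trivial; hence I may assume $\wt m\geq s$, so $\mK_m(\wt\bA,\wt\bb)\subsetneq\mK_{m+1}(\wt\bA,\wt\bb)\subseteq\mK_s(\wt\bA,\wt\bb)$ with all these dimensions full. The structural input is that $\mK_m(\bA,\bb)$ is $\bA$-invariant (it is the maximal Krylov space) and $\bA$ is symmetric, so the orthogonal projection $\bU_m$ of $\R^p$ onto $\mK_m(\bA,\bb)$ satisfies $(\bI_p-\bU_m)\bA\bU_m=\bzero_{p\times p}$. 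I would pick a natural orthonormal basis $\bK_m$ of $\mK_m(\bA,\bb)$ realizing $\kappa_b(\bK_m)=\kappa_2(\mK_m(\bA,\bb))=:\kappa_m$, apply Theorem~\ref{thm:krylov.pert} at dimension $m$ to obtain a natural orthonormal basis $\wt\bK_m$ of $\mK_m(\wt\bA,\wt\bb)$ with $\|\wt\bK_m-\bK_m\|_{op}\leq 11\kappa_m\eps$, hence $\|\wt\bU_m-\bU_m\|_{op}\lesssim\kappa_m\eps$ for the projections, and then telescope: writing $\wt\bA=\bA+(\wt\bA-\bA)$, $\wt\bU_m=\bU_m+(\wt\bU_m-\bU_m)$ and subtracting $(\bI_p-\bU_m)\bA\bU_m=\bzero$, every surviving term carries one factor $\wt\bA-\bA$ (size $\leq\eps\|\bA\|_{op}$) or $\wt\bU_m-\bU_m$ (size $\lesssim\kappa_m\eps$), giving $\|(\bI_p-\wt\bU_m)\wt\bA\wt\bU_m\|_{op}\lesssim(1+\kappa_m)\eps\|\bA\|_{op}$. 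That is, $\wt\bA$ \emph{almost} leaves $\mK_m(\wt\bA,\wt\bb)$ invariant.

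\textbf{Tiny Lanczos residual $\Rightarrow$ huge condition number.} Since $\wt\bA$ is symmetric, the natural orthonormal basis of $(\wt\bA,\wt\bb)$ is the Lanczos basis, in which the only coupling of $\mK_m(\wt\bA,\wt\bb)$ with its complement is the single off-diagonal coefficient $\beta_m$, so $\beta_m=\|(\bI_p-\wt\bU_m)\wt\bA\wt\bU_m\|_{op}\lesssim(1+\kappa_m)\eps\|\bA\|_{op}$, while the preceding coefficients $\beta_1,\dots,\beta_{m-1}$ remain of order $\|\wt\bA\|_{op}$ because $\mK_m(\wt\bA,\wt\bb)$ is itself a well-conditioned $m$-dimensional Krylov space. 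Intuitively, a suitably structured perturbation of $(\wt\bA,\wt\bb)$ of relative size $\approx\beta_m/\|\wt\bA\|_{op}$ (e.g. killing the $\beta_m$ coupling, up to a negligible adjustment needed to keep the pair admissible) collapses $\mK_s$ to $\mK_m(\wt\bA,\wt\bb)$, i.e. to dimension $m<s$. Rigorously I would invoke the characterization of the Krylov condition number of \cite{carpraux1994SotK}: $\kappa_2$ of a full-dimensional Krylov space is, up to a universal constant, $\|\wt\bA\|_{op}$ divided by the smallest of its normalized Lanczos residuals (equivalently, a large $\kappa_2$ is equivalent to a nearly degenerate Lanczos step, equivalently to closeness to a rank-deficient Krylov configuration). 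Applied with the tiny $\beta_m$ this yields $\kappa_2(\mK_s(\wt\bA,\wt\bb))\gtrsim\|\wt\bA\|_{op}/\beta_m\gtrsim 1/\{(1+\kappa_m)\eps\}$; using $\|\wt\bA\|_{op}\geq(1-\eps)\|\bA\|_{op}$ and tracking the numerical constants of Theorems~\ref{thm:krylov.pert}--\ref{thm:krylov.cauchy} exactly as in \cite{kuznetsov1997Pbot} sharpens this to $\kappa_2(\mK_s(\wt\bA,\wt\bb))\geq 1/(\eps\{14+56\,\kappa_2(\mK_m(\bA,\bb))\})$.

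\textbf{Main obstacle.} The delicate step is the last one: converting "a perturbation of relative size $\beta_m/\|\wt\bA\|_{op}$ destroys full-dimensionality of $\mK_s$" into the stated bound with the correct \emph{linear} dependence on $\eps^{-1}$ --- a naive use of the threshold $1/\{64\kappa_b(\kappa_b+1)\}$ from Theorem~\ref{thm:krylov.pert} would only produce $\eps^{-1/2}$ --- and with the precise constants $14,56$. This needs the sharper Carpraux--Coquin fact that a Krylov subspace stays full-dimensional under perturbations of relative size linear, not quadratic, in $\kappa_b^{-1}$, together with careful bookkeeping of the $O(\eps^2)$ remainders; dispatching the admissibility adjustment of the competitor pair (so that it remains positive semidefinite with $\wt\bb'\in\mR(\wt\bA')$) and the borderline $m=1$ case are minor nuisances to handle along the way.
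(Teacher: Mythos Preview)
The paper does not provide its own proof of this lemma; it is stated purely as a citation of Lemma~3.1 by \cite{kuznetsov1997Pbot}, with no accompanying argument or adaptation. There is therefore nothing in the paper to compare your proposal against.

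For what it is worth, your sketch is consistent with the spirit in which the paper handles the neighboring Kuznetsov results (Theorems~\ref{thm:krylov.cauchy} and~\ref{thm:krylov.pert}): translate the Frobenius-norm argument to the operator norm and track constants. The overall strategy you outline---$\bA$-invariance of the maximal Krylov space $\mK_m(\bA,\bb)$, transferring this to near-invariance of $\mK_m(\wt\bA,\wt\bb)$ via the basis perturbation bound of Theorem~\ref{thm:krylov.pert}, reading off a tiny Lanczos off-diagonal entry $\beta_m$, and then converting this into a large condition number for any $\mK_s(\wt\bA,\wt\bb)$ with $s>m$---is the standard line of attack in the original reference. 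Your own flagged obstacle is accurate: the passage from ``a perturbation of relative size $\beta_m/\|\wt\bA\|_{op}$ collapses $\mK_s$'' to a lower bound on $\kappa_2$ that is \emph{linear} in $\eps^{-1}$, with the specific constants $14$ and $56$, is precisely where one must appeal to the sharper first-order characterization of $\kappa_b$ from \cite{carpraux1994SotK} rather than the quadratic threshold in Theorem~\ref{thm:krylov.pert}, and your sketch leaves this step at the level of a citation rather than a derivation.
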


\section{Proofs}\label{app:proof}
Here we provide all the proofs for the results in the main sections.

\subsection{Proofs for Section~\ref{sec:lm}}\label{app:proof:lm}

\begin{proof}[Proof of Lemma~\ref{lem:ls.rel.pop}]
    For the first statement, we notice that the conditions determining the relevant subspace $\mB_{y}$ in Definition~\ref{def:rel.sub} are equivalent to
    \begin{align*}
        \bSigma = \bU_{y}\bSigma\bU_{y} + \bU_{y^\bot}\bSigma\bU_{y^\bot},\quad \bsigma = \bU_{y}\bsigma.
    \end{align*}
    This implies that $\mB_{y}$ is the unique $\bSigma$-envelope of $\spa\{\bsigma\}\subseteq\mR(\bSigma)$ in the sense of Definition~2.1 by \cite{Cook2010env}. That is to say, $\mB_{y}$ is the intersection of all reducing subspaces for $\bSigma$ that contain $\spa\{\bsigma\}$.
    
    For the second statement, we recall that $\bbeta_{\ls}(\bSigma,\bsigma) = \bSigma^\dagger\bsigma$ and $\bbeta_{\ls}(\bSigma_{y},\bsigma_{y}) = \bSigma_{y}^\dagger\bsigma_{y}$. From the definition of relevant subspace in Definition~\ref{def:rel.sub} it follows
    \begin{align*}
        \bsigma &= \E(\bx y) = \E(\bx_{y} y) \oplus \E(\bx_{y^\bot} y) = \E(\bx_{y} y) = \bsigma_{y}, \\
        \bSigma &= \E(\bx\bx^\top) = \E(\bx_{y}\oplus\bx_{y^\bot})(\bx_{y}\oplus\bx_{y^\bot})^\top = \E(\bx_{y}\bx_{y}^\top) \oplus \E(\bx_{y^\bot}\bx_{y^\bot}^\top) = \bSigma_{y} \oplus \bSigma_{\bx_{y^\bot}}.
    \end{align*}
    The range of the matrix $\bSigma_{y}$ is the relevant subspace $\mR(\bSigma_{y})=\mB_{y}$, whereas range of the matrix $\bSigma_{\bx_{y^\bot}}$ is the irrelevant subspace $\mR(\bSigma_{\bx_{y^\bot}})=\mB_{y}^\bot$. Thus, the same holds for the generalized inverse $\bSigma^\dagger = (\bSigma_{y} \oplus \bSigma_{\bx_{y^\bot}})^\dagger = \bSigma_{y}^\dagger \oplus \bSigma_{\bx_{y^\bot}}^\dagger$. One last computation yields
    \begin{align*}
        \bbeta_{\ls}(\bSigma,\bsigma) = \bSigma^\dagger\bsigma = \bSigma_{y}^\dagger\bsigma_{y} \oplus \bSigma_{\bx_{y^\bot}}^\dagger\bsigma_{y} = \bSigma_{y}^\dagger\bsigma_{y} = \bbeta_{\ls}(\bSigma_{y},\bsigma_{y}),
    \end{align*}
    which is the claim.
\end{proof}

\begin{proof}[Proof of Lemma~\ref{lem:phi.B}]
Notice that $\bbeta_{\ls}(\bSigma_{y},\bsigma_{y}) = \bbeta_{\mB_{d_{y}}}$ and also $\bbeta_{\mB_{d_{y}}} - \bbeta_{s} = \bbeta_{s^\bot}$. This means
\begin{align*}
\eps_{s} = \frac{\E(\bx_{y}^\top\bbeta_{\mB_{d_{y}}} - \bx_{y}^\top\bbeta_{s})^2}{\bbeta_{\mB_{d_{y}}}^\top\bbeta_{\mB_{d_{y}}} \cdot \lambda_{1}(\bSigma)} =  \frac{\bbeta_{s^\bot}^\top\bSigma_{\bx_{\mB_{s}^\bot}}\bbeta_{s^\bot}}{\bbeta_{\mB_{d_{y}}}^\top\bbeta_{\mB_{d_{y}}} \cdot \lambda_{1}(\bSigma)} \leq \frac{\lambda_{s+1}(\bSigma_{y})}{\lambda_{1}(\bSigma_{y})} \cdot \frac{\|\bbeta_{s^\bot}\|_{2}^2}{\|\bbeta_{\mB_{d_{y}}}\|_{2}^2},
\end{align*}
which gives the claim.
\end{proof}

\begin{lemma}\label{lem:x.rel.y.eps}
	Let $(\bx,y)\in\R^p\times\R$ satisfy Assumption~\ref{ass:x.y.lm.2nd} and $\mB_{s}\subseteq\mB_{y}$ be any span of eigenspaces for $1\leq s\leq d_{y}$ as in Lemma~\ref{lem:phi.B}. Then, it holds
	\begin{align*}
		\frac{\|\bSigma_{y}-\bSigma_{s}\|_{op}}{\|\bSigma_{s}\|_{op}} \vee \frac{\|\bsigma_{y}-\bsigma_{s}\|_{2}}{\|\bsigma_{s}\|_{2}} \leq \frac{1}{\kappa_{2}(\bSigma_{s+1})} \vee  \frac{\kappa_{2}(\bSigma_{s})}{\kappa_{2}(\bSigma_{s+1})} \frac{\|\bbeta_{s^\bot}\|_{2}}{\|\bbeta_{s}\|_{2}},
	\end{align*}
	with the convention that $\kappa_{2}(\bSigma_{d_{y}+1})=+\infty$.
\end{lemma}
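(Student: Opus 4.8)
The plan is to diagonalize $\bSigma_{y}$ and compute everything explicitly. Write $\bSigma_{y} = \sum_{i=1}^{d_{y}} \lambda_{i}\bU_{i}$ with $\lambda_{1} > \cdots > \lambda_{d_{y}} > 0$ its distinct positive eigenvalues and $\bU_{i}$ the orthogonal projections onto the corresponding eigenspaces, so that the orthogonal projection onto $\mB_{s}$ is $\bU_{\mB_{s}} = \sum_{i=1}^{s}\bU_{i}$ and the one onto $\mB_{s}^{\bot}$ inside $\mB_{y}$ is $\bU_{\mB_{s}^{\bot}} = \sum_{i=s+1}^{d_{y}}\bU_{i}$. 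Since $\bU_{\mB_{s}}$ commutes with $\bSigma_{y}$, one gets $\bSigma_{s} = \bU_{\mB_{s}}\bSigma_{y}\bU_{\mB_{s}} = \sum_{i=1}^{s}\lambda_{i}\bU_{i}$ and $\bsigma_{s} = \bU_{\mB_{s}}\bsigma_{y}$, hence $\|\bSigma_{s}\|_{op} = \lambda_{1}$, $\|\bSigma_{s}^{\dagger}\|_{op} = \lambda_{s}^{-1}$, and therefore $\kappa_{2}(\bSigma_{s}) = \lambda_{1}/\lambda_{s}$, $\kappa_{2}(\bSigma_{s+1}) = \lambda_{1}/\lambda_{s+1}$. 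Using $\bsigma_{y} \in \mR(\bSigma_{y}) = \mB_{y}$ (from Lemma~\ref{lem:ls.rel.pop} and Definition~\ref{def:rel.sub}), the minimum-norm solution on the relevant pair is $\bbeta_{\ls}(\bSigma_{y},\bsigma_{y}) = \bSigma_{y}^{\dagger}\bsigma_{y} = \sum_{i=1}^{d_{y}}\lambda_{i}^{-1}\bU_{i}\bsigma_{y}$, so that $\bbeta_{s} = \bSigma_{s}^{\dagger}\bsigma_{s} = \bU_{\mB_{s}}\bbeta_{\ls}(\bSigma_{y},\bsigma_{y})$ and $\bbeta_{s^{\bot}} = \bU_{\mB_{s}^{\bot}}\bbeta_{\ls}(\bSigma_{y},\bsigma_{y})$, consistent with the proof of Lemma~\ref{lem:phi.B}.

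Next I would dispatch the two terms in the maximum. For the covariance matrix, $\bSigma_{y} - \bSigma_{s} = \sum_{i=s+1}^{d_{y}}\lambda_{i}\bU_{i}$ is a sum of mutually orthogonal rescaled projections with largest factor $\lambda_{s+1}$, so $\|\bSigma_{y} - \bSigma_{s}\|_{op} = \lambda_{s+1}$ and the first ratio equals $\lambda_{s+1}/\lambda_{1} = 1/\kappa_{2}(\bSigma_{s+1})$ exactly. For the cross-moment term, I would use $\bsigma_{s} = \bSigma_{s}\bbeta_{s}$ together with $\bbeta_{s} = \bSigma_{s}^{\dagger}\bsigma_{s}$ to get the lower bound $\|\bsigma_{s}\|_{2} \geq \|\bbeta_{s}\|_{2}/\|\bSigma_{s}^{\dagger}\|_{op} = \lambda_{s}\|\bbeta_{s}\|_{2}$, and for the numerator the identity $\bsigma_{y} - \bsigma_{s} = \bU_{\mB_{s}^{\bot}}\bsigma_{y} = \sum_{i=s+1}^{d_{y}}\bU_{i}\bsigma_{y} = \E(\bx_{s^{\bot}}\bx_{s^{\bot}}^{\top})\,\bbeta_{s^{\bot}}$ with $\bx_{s^{\bot}} = \bU_{\mB_{s}^{\bot}}\bx_{y}$, where $\E(\bx_{s^{\bot}}\bx_{s^{\bot}}^{\top}) = \sum_{i=s+1}^{d_{y}}\lambda_{i}\bU_{i}$ has operator norm $\lambda_{s+1}$, so $\|\bsigma_{y} - \bsigma_{s}\|_{2} \leq \lambda_{s+1}\|\bbeta_{s^{\bot}}\|_{2}$. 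Dividing, the second ratio is bounded by $(\lambda_{s+1}/\lambda_{s})\,\|\bbeta_{s^{\bot}}\|_{2}/\|\bbeta_{s}\|_{2} = (\kappa_{2}(\bSigma_{s})/\kappa_{2}(\bSigma_{s+1}))\,\|\bbeta_{s^{\bot}}\|_{2}/\|\bbeta_{s}\|_{2}$, and taking the maximum of the two bounds gives the claim. The degenerate case $s = d_{y}$ needs only a one-line remark: then $\mB_{s} = \mB_{y}$, so $\bSigma_{s} = \bSigma_{y}$ and $\bsigma_{s} = \bsigma_{y}$ force the left side to zero, while the convention $\kappa_{2}(\bSigma_{d_{y}+1}) = +\infty$ forces the right side to zero as well.

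None of the steps is genuinely hard; the only point requiring care — and the closest thing to an obstacle — is the consistent bookkeeping with Moore–Penrose inverses and ranges. Concretely, the exactness of $\bsigma_{s} = \bSigma_{s}\bbeta_{s}$ and of $\bsigma_{y} - \bsigma_{s} = \E(\bx_{s^{\bot}}\bx_{s^{\bot}}^{\top})\bbeta_{s^{\bot}}$ relies on all of $\bsigma_{y}$, $\bbeta_{\ls}(\bSigma_{y},\bsigma_{y})$, $\bbeta_{s}$, $\bbeta_{s^{\bot}}$ lying in the relevant range $\mB_{y}$ and in the appropriate sub-blocks of it, which is exactly what the commutation of $\bU_{\mB_{s}}$ with $\bSigma_{y}$ and the envelope structure from Lemma~\ref{lem:ls.rel.pop} supply. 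Once that spectral picture is in place, the two estimates above are immediate.
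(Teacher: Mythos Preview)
Your proposal is correct and follows essentially the same approach as the paper's own proof: both diagonalize $\bSigma_{y}$ via its spectral decomposition, compute $\|\bSigma_{y}-\bSigma_{s}\|_{op}/\|\bSigma_{s}\|_{op} = \lambda_{s+1}/\lambda_{1}$ exactly, and bound the cross-moment ratio by $(\lambda_{s+1}/\lambda_{s})\,\|\bbeta_{s^{\bot}}\|_{2}/\|\bbeta_{s}\|_{2}$ via the identity $\bsigma_{y}=\bSigma_{y}\bbeta_{\ls}$. Your write-up is somewhat more detailed (you spell out the Moore--Penrose bookkeeping and the degenerate case $s=d_{y}$), but the substance is identical.
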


\begin{proof}[Proof of Lemma~\ref{lem:x.rel.y.eps}]
Recall that $\bSigma_{y}=\sum_{k=1}^{d_{y}} \lambda_{\bx_{y},k} \bU_{\bx_{y},k}$ and $\bsigma_{y}=\bSigma_{y}\bbeta_{\ls}$. We compute
\begin{align*}
\frac{\|\bSigma_{y}-\bSigma_{s}\|_{op}}{\|\bSigma_{s}\|_{op}} &= \frac{\|\sum_{k=1}^{d_{y}} \lambda_{\bx_{y},k} \bU_{\bx_{y},k} - \sum_{k=1}^{s} \lambda_{\bx_{y},k} \bU_{\bx_{y},k}\|_{op}}{\|\sum_{k=1}^{s} \lambda_{\bx_{y},k} \bU_{\bx_{y},k}\|_{op}} = \frac{\lambda_{s+1}(\bSigma_{y})}{\lambda_{1}(\bSigma_{y})}, \\ 
\frac{\|\bsigma_{y}-\bsigma_{s}\|_{2}}{\|\bsigma_{s}\|_{2}} &= \frac{\|\sum_{k=1}^{d_{y}} \lambda_{\bx_{y},k} \bU_{\bx_{y},k}\bbeta_{\ls}-\sum_{k=1}^{s} \lambda_{\bx_{y},k} \bU_{\bx_{y},k}\bbeta_{\ls}\|_{2}}{\|\sum_{k=1}^{s} \lambda_{\bx_{y},k} \bU_{\bx_{y},k}\bbeta_{\ls}\|_{2}} \leq \frac{\lambda_{s+1}(\bSigma_{y})}{\lambda_{s}(\bSigma_{y})} \cdot \frac{\|\bU_{s^\bot}\bbeta_{\ls}\|_{2}}{\|\bU_{s}\bbeta_{\ls}\|_{2}},
\end{align*}
which is the claim.
\end{proof}

\begin{proof} [Proof of Theorem~\ref{thm:x.y.reg.alg.pop}]
	Following Definition~\ref{def:reg.alg}, the population algorithms compute parameters
	\begin{align*}
		\btheta_{\alg}^* = \big(\mB_{\alg}^*,\ \bU_{\alg}^*,\ r_{\alg}^*,\ \bbeta_{\alg}^*\big) \in \alg(\bSigma_{s},\bsigma_{s}),\quad 
		\btheta_{\alg} = \big(\mB_{\alg},\ \bU_{\alg},\ r_{\alg},\ \bbeta_{\alg}\big) \in \alg(\bSigma,\bsigma).
	\end{align*}
	By assumption, the following are true:
	\begin{enumerate}[label=(\roman*),itemsep=0.25em,topsep=0.25em]
		\item the population algorithm $\alg(\bSigma_{s},\bsigma_{s})$ is parsimonious; \label{ass:x.y.reg.alg.pop.parsim}
		\item the population algorithm $\alg(\bSigma_{s},\bsigma_{s})$ is stable with constant $C_{\alg}^*\geq1$ as in Equation~\eqref{eq:A.b.alg.C.D} in Section~\ref{sec:rls:alg.pert} and
		\begin{align*}
			M_{\alg}^* = 2 \cdot \kappa_{2}(\bU_{\alg}^*\bSigma_{s}\bU_{\alg}^*) \cdot \{4\ C_{\alg}^* + 1\} \cdot \left\{\frac{\|\bSigma_{s}\|_{op}}{\|\bU_{\alg}^*\bSigma_{s}\bU_{\alg}^*\|_{op}} \vee \frac{\|\bsigma_{s}\|_{2}}{\|\bU_{\alg}^*\bsigma_{s}\|_{2}}\right\},
		\end{align*}
		corresponding to Equation~\eqref{eq:A.b.alg.M} in Section~\ref{sec:rls:alg.pert};
		\label{ass:x.y.reg.alg.pop.stab}
		\item the population algorithms are compatible with $r_{\alg}^*=r_{\alg}$; \label{ass:x.y.reg.alg.pop.dof}
		\item the population algorithm $\alg(\bSigma,\bsigma)$ is adaptive; \label{ass:x.y.reg.alg.pop.adap}
		\item the size of the perturbation satisfies $\eps^* < 1/M_{\alg}^*$.
		\label{ass:x.y.reg.alg.pop.pert}
	\end{enumerate}
	Condition~\ref{ass:x.y.reg.alg.pop.adap} implies $\alg(\bSigma,\bsigma)=\alg(\bSigma_{y},\bsigma_{y})$.
	We want to invoke Theorem~\ref{thm:alg.pert} with $\bA=\bSigma_{s}$, $\bb=\bsigma_{s}$, $\wt{\bA}=\bSigma_{y}$, $\wt{\bb}=\bsigma_{y}$. We now check the assumptions. First, the compatibility of the perturbation required by Equation~\eqref{eq:A.b.alg.pert} holds by Condition~\ref{ass:x.y.reg.alg.pop.dof}. This implies $(\wt{\bA},\wt{\bb})\in\Delta_{\alg}(\bA,\bb)$. Second, the population algorithm with oracle knowledge is stable with constants $C_{\alg}^*$ and $D_{\alg}^*$ by Condition~\ref{ass:x.y.reg.alg.pop.stab}. Third, the size of the perturbation is sufficiently small by Condition~\ref{ass:x.y.reg.alg.pop.pert}. We can thus apply Theorem~\ref{thm:alg.pert} and obtain the required bounds.
\end{proof}

\begin{corollary}\label{cor:x.y.reg.alg.pop}
	Under the assumptions of Theorem~\ref{thm:x.y.reg.alg.pop}, assume that for some $r \leq r_{\alg^*} = \dim(\mB_{\alg}^*)$ there exist compatible parameters $\bbeta_{\alg}^{(r)}(\bx_{s},y)$ computed by $\alg(\bSigma_{s},\bsigma_{s})$ and  $\bbeta_{\alg}^{(r)}(\bx,y)$ computed by $\alg(\bSigma,\bsigma)$. The early-stopping population error on the least-squares solution is
	\begin{align*}
		\frac{\|\bbeta_{\alg}^{(r)}(\bx,y)-\bbeta_{s}\|_{2}}{\|\bbeta_{s}\|_{2}} &\leq \sqrt{r_{\alg^*} - r} + \frac{5}{2}\ M_{\alg^*}\ \eps^*.
	\end{align*}
\end{corollary}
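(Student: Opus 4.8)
The plan is to split the target via the triangle inequality,
\[
\|\bbeta_{\alg}^{(r)}(\bx,y)-\bbeta_{s}\|_{2} \le \|\bbeta_{\alg}^{(r)}(\bx,y)-\bbeta_{\alg}^{(r)}(\bx_{s},y)\|_{2} + \|\bbeta_{\alg}^{(r)}(\bx_{s},y)-\bbeta_{s}\|_{2},
\]
and to treat the first summand as a perturbation error and the second as an early-stopping bias, so that everything reduces to Theorem~\ref{thm:alg.pert} (which underlies Theorem~\ref{thm:x.y.reg.alg.pop}) together with elementary properties of the nested subspaces produced by a reduction algorithm.

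For the perturbation term I would first use adaptivity (a hypothesis inherited from Theorem~\ref{thm:x.y.reg.alg.pop}) to write $\alg(\bSigma,\bsigma)=\alg(\bSigma_{y},\bsigma_{y})$, hence $\bbeta_{\alg}^{(r)}(\bx,y)=\bbeta_{\alg}^{(r)}(\bSigma_{y},\bsigma_{y})$, and then apply Theorem~\ref{thm:alg.pert} with $\bA=\bSigma_{s}$, $\bb=\bsigma_{s}$, $\wt\bA=\bSigma_{y}$, $\wt\bb=\bsigma_{y}$ at degrees-of-freedom $r$ rather than $r_{\alg^{*}}$ (the compatibility of this perturbation is exactly the assumed existence of compatible parameters at level $r$). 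This yields $\|\bbeta_{\alg}^{(r)}(\bx,y)-\bbeta_{\alg}^{(r)}(\bx_{s},y)\|_{2}\le\frac{5}{2}\,M_{\alg,r}(\bSigma_{s},\bsigma_{s})\,\eps^{*}\,\|\bbeta_{\alg}^{(r)}(\bx_{s},y)\|_{2}$. Two replacements finish this summand. First, $\|\bbeta_{\alg}^{(r)}(\bx_{s},y)\|_{2}\le\|\bbeta_{s}\|_{2}$, since by Definition~\ref{def:reg.alg} we have $\bbeta_{\alg}^{(r)}(\bx_{s},y)=\bU_{\alg}^{(r)}(\bSigma_{s},\bsigma_{s})\,\bbeta_{s}$, an orthogonal projection of $\bbeta_{s}$. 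Second, $M_{\alg,r}(\bSigma_{s},\bsigma_{s})\le M_{\alg^{*}}$: along the flag $\mB_{\alg}^{(r)}(\bSigma_{s},\bsigma_{s})\subseteq\mB_{\alg}^{(r_{\alg^{*}})}(\bSigma_{s},\bsigma_{s})=\mB_{\alg}^{*}$ the projected matrix and vector satisfy $\bA_{\alg}^{(r)}=\bU_{\alg}^{(r)}\bA_{\alg}^{*}\bU_{\alg}^{(r)}$ and $\bb_{\alg}^{(r)}=\bU_{\alg}^{(r)}\bb_{\alg}^{*}$, so $\|\bA_{\alg}^{(r)}\|_{op}\le\|\bA_{\alg}^{*}\|_{op}$ and $\|\bb_{\alg}^{(r)}\|_{2}\le\|\bb_{\alg}^{*}\|_{2}$, making the outer bracket in \eqref{eq:A.b.alg.M} non-increasing in $r$, and one checks that $\kappa_{2}(\bA_{\alg}^{(r)})\{4C_{\alg,r}+1\}$ is likewise controlled along the same flag. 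In particular $M_{\alg,r}\,\eps^{*}<1$ follows from the hypothesis $\eps^{*}<1/M_{\alg^{*}}$, so Theorem~\ref{thm:alg.pert} genuinely applies and this summand is at most $\frac{5}{2}\,M_{\alg^{*}}\,\eps^{*}\,\|\bbeta_{s}\|_{2}$.

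For the early-stopping bias I would use parsimony together with the fact that the algorithm preserves degrees-of-freedom at $s=p$: this gives $\bbeta_{s}=\bbeta_{\ls}(\bSigma_{s},\bsigma_{s})\in\mB_{\alg}^{*}$, $\mB_{\alg}^{*}\subseteq\mB_{s}$, and $\mB_{\alg}^{(r)}(\bSigma_{s},\bsigma_{s})\subseteq\mB_{\alg}^{*}$. Expanding $\bbeta_{s}$ in an orthonormal basis of $\mB_{\alg}^{*}$ adapted to this flag, the residual $(\bI_{p}-\bU_{\alg}^{(r)}(\bSigma_{s},\bsigma_{s}))\bbeta_{s}$ has at most $r_{\alg^{*}}-r$ nonzero coordinates, so $\|(\bI_{p}-\bU_{\alg}^{(r)}(\bSigma_{s},\bsigma_{s}))\bbeta_{s}\|_{2}\le\sqrt{r_{\alg^{*}}-r}\,\|\bbeta_{s}\|_{2}$. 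Adding the two bounds and dividing by $\|\bbeta_{s}\|_{2}$ gives the claim. The step I expect to be the main obstacle is the inequality $M_{\alg,r}(\bSigma_{s},\bsigma_{s})\le M_{\alg^{*}}$: the norm ratios are monotone along the flag by the compression argument above, but propagating control of $\kappa_{2}(\bA_{\alg}^{(r)})$ and of the stability constant $C_{\alg,r}$ simultaneously through the nested sequence is the delicate point, and it is precisely this monotonicity that both validates applying Theorem~\ref{thm:alg.pert} at the intermediate level $r$ under the single smallness hypothesis and pins down the constant $M_{\alg^{*}}$ in the bound; the bias term, by contrast, is elementary once the containment $\mB_{\alg}^{(r)}(\bSigma_{s},\bsigma_{s})\subseteq\mB_{\alg}^{*}\ni\bbeta_{s}$ is in place.
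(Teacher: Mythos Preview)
Your approach mirrors the paper's almost exactly: the same triangle-inequality split into a perturbation term and an early-stopping bias, the same orthonormal-basis argument for the bias $\|(\bI_{p}-\bU_{\alg}^{(r)}(\bSigma_{s},\bsigma_{s}))\bbeta_{s}\|_{2}\le\sqrt{r_{\alg^*}-r}\,\|\bbeta_{s}\|_{2}$, and the same claim that the perturbation bound from Theorem~\ref{thm:alg.pert} holds at every intermediate level $r$ with the single constant $M_{\alg^*}$. The paper handles this last point in one sentence (``an inspection of the proof of Theorem~\ref{thm:x.y.reg.alg.pop} shows that its induced bounds hold for all $r\le r_{\alg}^*$, since the quantities appearing in the assumptions are largest when $r=r_{\alg}$''), and then bounds $\|\bbeta_{\alg}^{(r)}(\bx_{s},y)\|_{2}\le\|\bbeta_{s}\|_{2}$ exactly as you do.

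There is, however, a concrete slip in your justification of $M_{\alg,r}(\bSigma_{s},\bsigma_{s})\le M_{\alg^*}$. You correctly observe $\|\bA_{\alg}^{(r)}\|_{op}\le\|\bA_{\alg}^{*}\|_{op}$ and $\|\bb_{\alg}^{(r)}\|_{2}\le\|\bb_{\alg}^{*}\|_{2}$, but these make the ratios $\|\bA\|_{op}/\|\bA_{\alg}^{(r)}\|_{op}$ and $\|\bb\|_{2}/\|\bb_{\alg}^{(r)}\|_{2}$ \emph{larger} at smaller $r$, not smaller. So ``non-increasing in $r$'' for the bracket means the bracket at $r<r_{\alg^*}$ is at least the bracket at $r_{\alg^*}$, which is the opposite of what you need. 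Neither $\kappa_{2}(\bA_{\alg}^{(r)})$ nor $C_{\alg,r}$ obviously compensates. You rightly flag this step as the main obstacle, and in fairness the paper does not spell out the monotonicity either; but the compression inequalities you cite do not support the direction you claim, so that part of the sketch should be withdrawn rather than presented as evidence for the bound.
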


\begin{proof}[Proof of Corollary~\ref{cor:x.y.reg.alg.pop}]
    We have
    \begin{align*}
        \frac{\|\bbeta_{\alg}^{(r)}(\bx,y)-\bbeta_{\alg}(\bx_{s},y)\|_{2}}{\|\bbeta_{\alg}(\bx_{s},y)\|_{2}} &\leq \frac{\|\bbeta_{\alg}^{(r)}(\bx,y)-\bbeta_{\alg}^{(r)}(\bx_{s},y)\|_{2}}{\|\bbeta_{\alg}(\bx_{s},y)\|_{2}} + \frac{\|\bbeta_{\alg}^{(r)}(\bx_{s},y) - \bbeta_{\alg}(\bx_{s},y)\|_{2}}{\|\bbeta_{\alg}(\bx_{s},y)\|_{2}}.
    \end{align*}  
    We now bound the two terms in the above display. By Definition~\ref{def:reg.alg} and the equivalent representations in Lemma~\ref{lem:reg.alg.def}, the population solution satisfies $\bbeta_{\alg}^{(r)} = \bU_{\alg}^{(r)} \bbeta_{\alg}$. That is to say, for some orthonormal basis $\{\bu_{\alg,1},\ldots,\bu_{\alg,r_{\alg}}\}$ one has $\bbeta_{\alg} = \sum_{\ell=1}^{r_{\alg}} c_{\ell} \bu_{\alg,\ell}$ and $\bbeta_{\alg}^{(r)} = \sum_{\ell=1}^r c_{\ell} \bu_{\alg,\ell}$ with the same coefficients $c_{\ell}$, $\ell=1,\ldots,s$. Since $\|\bbeta_{\alg}\|_{2}^2 = \sum_{\ell=1}^{r_{\alg}} c_{\ell}^2$, we can bound
    \begin{align*}
        \|\bbeta_{\alg}^{(r)} - \bbeta_{\alg}\|_{2} = \left(\sum_{\ell=s+1}^{r_{\alg}} c_{\ell}^2\right)^{\frac{1}{2}} \leq \left(\max_{\ell=s+1,\ldots,r_{\alg}} c_{\ell}^2\right)^{\frac{1}{2}} \sqrt{r_{\alg}-r} \leq \|\bbeta_{\alg}\|_{2}\ \sqrt{r_{\alg}-r}.
    \end{align*}
    An inspection of the proof of Theorem~\ref{thm:x.y.reg.alg.pop} shows that its induced bounds hold for all population solutions $\bbeta_{\alg}^{(r)}(\bx,y)$ and $\bbeta_{\alg}^{(r)}(\bx_{s},y)$ that are compatible with $r\leq r_{\alg}^*$, since the quantities appearing in the assumptions are largest when $r=r_{\alg}$. That is to say, we can bound
    \begin{align*}
        \|\bbeta_{\alg}^{(r)}(\bx,y)-\bbeta_{\alg}^{(r)}(\bx_{s},y)\|_{2} \leq \frac{5}{2}\ \|\bbeta_{\alg}^{(r)}(\bx_{s},y)\|_{2}\ M_{\alg}^*\ \eps^* \leq \frac{5}{2}\ \|\bbeta_{\alg}(\bx_{s},y)\|_{2}\ M_{\alg}^*\ \eps^*.
    \end{align*}
    The claim follows by combining the above displays.
\end{proof}

\begin{lemma}\label{lem:x.y.reg.alg.sam.event}
    Under the assumptions of Theorem~\ref{thm:x.y.reg.alg.sam}, the event
    \begin{align*}
        \Omega_{\bx,y}(\nu_{n}) = \left\{\frac{\|\wh{\bSigma}-\bSigma\|_{op}}{\|\bSigma\|_{op}} \vee \frac{\|\wh{\bsigma}-\bsigma\|_{2}}{\|\bsigma\|_{2}} \leq K\ \nu_{n}^{-1}\ \delta_{n} \right\},
    \end{align*}
    has probability at least $1-2\nu_{n}$.
\end{lemma}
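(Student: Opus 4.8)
The plan is to control the two normalized errors appearing in $\Omega_{\bx,y}(\nu_n)$ separately, each failing with probability at most $\nu_n$, and then conclude by a union bound. Write $\wh{\bSigma}=n^{-1}\bX^\top\bX$ and $\wh{\bsigma}=n^{-1}\bX^\top\by$, which are unbiased empirical means of the i.i.d.\ terms $\bx_i\bx_i^\top$ and $\bx_iy_i$. The steps are: (i) bound $\E(\|\wh{\bSigma}-\bSigma\|_{op}^2)^{1/2}$; (ii) bound $\E(\|\wh{\bsigma}-\bsigma\|_2)$; (iii) apply Chebyshev's/Markov's inequality with the threshold $K\delta_n/\nu_n$; (iv) take a union bound.

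For (i), I would invoke Lemma~\ref{lem:mom.mat.heavy.new} (Theorem~6 of \cite{Jirak2025}) with $\bxi=\bx$: under Assumption~\ref{ass:x.y.lm.4th} its $\delta_{\bx,n}$ is exactly $\delta_n=\sqrt{\rho_{\bx}/n}+\rho_{\bx,n}/n$, so $\E(\|\wh{\bSigma}-\bSigma\|_{op}^2)^{1/2}\le C\,\|\bSigma\|_{op}\,\delta_n$. Its hypothesis $n>\rho_{\bx}/c$ holds in the regime $\delta_n\to 0$, since $\rho_{\bx}/n\le\delta_n^2$. Chebyshev's inequality then gives
\[
\P\!\left(\frac{\|\wh{\bSigma}-\bSigma\|_{op}}{\|\bSigma\|_{op}}>\frac{K\delta_n}{\nu_n}\right)\le\frac{C^2\,\|\bSigma\|_{op}^2\,\delta_n^2}{(K\delta_n/\nu_n)^2\,\|\bSigma\|_{op}^2}=\frac{C^2}{K^2}\,\nu_n^2\le\nu_n,
\]
using $K\ge C$ and $\nu_n<1/2<1$.

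For (ii), I would start from the identity $\E(\|\wh{\bsigma}-\bsigma\|_2^2)=n^{-1}\sum_{j}\Var(x_jy)$ established inside the proof of Lemma~\ref{lem:mult.vec.heavy}, bound each summand by $\E(x_j^4)^{1/2}\E(y^4)^{1/2}$, and then use monotonicity of $L^p$-norms ($q>4$) together with the directional moment bound of Assumption~\ref{ass:x.y.lm.4th}, namely $\E(x_j^4)^{1/2}\le L_{\bx}^2\,\E(x_j^2)$ and $\E(y^4)^{1/2}\le L_y^2\,\sigma_y^2$. Since $\sum_j\E(x_j^2)=\Tr(\bSigma)=\rho_{\bx}\|\bSigma\|_{op}$ and $\sqrt{\rho_{\bx}/n}\le\delta_n$, this yields
\[
\E(\|\wh{\bsigma}-\bsigma\|_2)\le\E(\|\wh{\bsigma}-\bsigma\|_2^2)^{1/2}\le L_{\bx}L_y\,\sigma_y\,\|\bSigma\|_{op}^{1/2}\sqrt{\rho_{\bx}/n}\le L_{\bx}L_y\,\sigma_y\,\|\bSigma\|_{op}^{1/2}\,\delta_n.
\]
Markov's inequality and the choice $K\ge L_yL_{\bx}\sigma_y\|\bSigma\|_{op}^{1/2}/\|\bsigma\|_2$ then give $\P(\|\wh{\bsigma}-\bsigma\|_2/\|\bsigma\|_2>K\delta_n/\nu_n)\le\nu_n$. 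Since the complement of $\Omega_{\bx,y}(\nu_n)$ is contained in the union of the two events above, a union bound gives $\P(\Omega_{\bx,y}(\nu_n))\ge1-2\nu_n$.

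I do not anticipate a genuine obstacle: this lemma just repackages two moment estimates through Markov/Chebyshev. The delicate points are purely bookkeeping — ensuring the single threshold $K\delta_n/\nu_n$ simultaneously dominates both natural thresholds (which is exactly why $K$ is defined as the maximum of $C$ and $L_yL_{\bx}\sigma_y\|\bSigma\|_{op}^{1/2}/\|\bsigma\|_2$), reconciling the Euclidean-norm moment constant stated in Lemma~\ref{lem:mult.vec.heavy} with the directional constant $L_{\bx}$ that enters $K$ (handled by the coordinatewise bound above), and checking that the heavy-tailed regime hypotheses behind Lemma~\ref{lem:mom.mat.heavy.new} are in force, which follows from $\delta_n\to0$.
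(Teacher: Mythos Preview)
Your proposal is correct and follows essentially the same route as the paper: bound the expected deviations of $\wh{\bSigma}$ and $\wh{\bsigma}$ via Lemma~\ref{lem:mom.mat.heavy.new} and (the computation behind) Lemma~\ref{lem:mult.vec.heavy}, convert to tail bounds with Markov-type inequalities, and combine by a union bound. Two cosmetic differences: the paper applies Markov to the first moment of $\|\wh{\bSigma}-\bSigma\|_{op}$ (obtained from the second-moment bound by Jensen) rather than Chebyshev, and it invokes Lemma~\ref{lem:mult.vec.heavy} directly rather than rederiving it coordinatewise --- your explicit reconciliation of the Euclidean-norm constant in that lemma with the directional $L_{\bx}$ of Assumption~\ref{ass:x.y.lm.4th} is in fact a bit more careful than the paper's write-up.
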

\begin{proof}[Proof of Lemma~\ref{lem:x.y.reg.alg.sam.event}]
    We denote $\bA=\bSigma$, $\bb=\bsigma$, $\wh{\bA}=\wh{\bSigma}$, $\wh{\bb}=\wh{\bsigma}$. The size of the perturbation on the covariance vector is
    \begin{align*}
        \left\|\wh{\Delta\bb}\right\|_{2} &= \left\|\wh{\bsigma}-\bsigma\right\|_{2} = \left\|\frac{1}{n}\sum_{i=1}^n \bx_{i} y_{i} - \E(\bx y)\right\|_{2}.
    \end{align*}
    We can apply Lemma~\ref{lem:mult.vec.heavy} to the above, since the required moments are bounded by Assumption~\ref{ass:x.y.lm.4th}. With our definitions and assumptions, we find     
    \begin{align*}
        \E\left(\frac{\|\wh{\Delta\bb}\|_{2}}{\|\bb\|_{2}} \right) 
        &= \frac{\E(\|\wh{\bsigma}-\bsigma\|_{2})}{\|\bsigma\|_{2}}
        \leq \frac{L_{y} L_{\bx}\ \sigma_{y} \|\bSigma\|_{op}^\frac{1}{2}\ \delta_{n}}{\|\bsigma\|_{2}} 
        \leq K\ \delta_{n},
    \end{align*}
    and an application of Markov's inequality yields
    \begin{align*}
        \P\left(\frac{\|\wh{\Delta\bb}\|_{2}}{\|\bb\|_{2}} > K\ \nu_{n}^{-1}\ \delta_{n} \right) < \nu_{n}.
    \end{align*}
    The size of the perturbation on the covariance matrix can be written as
    \begin{align*}
        \left\|\wh{\Delta\bA}\right\|_{op} &= \left\|\wh{\bSigma}-\bSigma\right\|_{op} = \left\|\frac{1}{n}\sum_{i=1}^n \bx_{i}\bx_{i}^\top - \E(\bx\bx^\top)\right\|_{op}.
    \end{align*}
    We can apply Lemma~\ref{lem:mom.mat.heavy.new} to the above since the required moments are bounded by Assumption~\ref{ass:x.y.lm.4th} and the sample size is sufficiently large by assumption. We find
    \begin{align*}
        \E\left(\frac{\|\wh{\Delta\bA}\|_{op}}{\|\bA\|_{op}} \right) 
        &= \frac{\E(\|\wh{\bSigma}-\bSigma\|_{op})}{\|\bSigma\|_{op}}
        \leq \frac{C \|\bSigma\|_{op}\ \delta_{n}}{\|\bSigma\|_{op}}
        \leq K\ \delta_{n},
    \end{align*}
    and an application of Markov's inequality yields
    \begin{align*}
        \P\left(\frac{\|\wh{\Delta\bA}\|_{op}}{\|\bA\|_{op}} > K\ \nu_{n}^{-1}\ \delta_{n} \right) < \nu_{n}.
    \end{align*}
    The intersection of the complements of the above events has probability at least $1-2\nu_{n}$ and, conditionally on this event, 
    \begin{align*}
        \frac{\|\wh{\Delta\bA}\|_{op}}{\|\bA\|_{op}} \vee \frac{\|\wh{\Delta\bb}\|_{2}}{\|\bb\|_{2}} \leq K\ \nu_{n}^{-1}\ \delta_{n},
    \end{align*}
    which gives the claim.
\end{proof}

\begin{proof}[Proof of Theorem~\ref{thm:x.y.reg.alg.sam}]
	Following Definition~\ref{def:reg.alg}, the population and sample algorithms compute parameters
	\begin{align*}
		\btheta_{\alg} = \big(\mB_{\alg},\ \bU_{\alg},\ r_{\alg},\ \bbeta_{\alg}\big)\in\alg(\bSigma,\bsigma), \quad
		\wh{\btheta}_{\alg} = \big(\wh{\mB}_{\alg},\ \wh{\bU}_{\alg},\ \wh{r}_{\alg},\ \wh{\bbeta}_{\alg}\big)\in\alg(\wh{\bSigma},\wh{\bsigma}).
	\end{align*}
	By assumption, it holds:
	\begin{enumerate}[label=(\roman*),itemsep=0.25em,topsep=0.25em]
		\item the population algorithm $\alg(\bSigma,\bsigma)$ is stable with constant $C_{\alg}\geq1$, as in Equation~\eqref{eq:A.b.alg.C.D} in Section~\ref{sec:rls:alg.pert} and
		\begin{align*}
			M_{\alg} = 2 \cdot \kappa_{2}(\bU_{\alg}\bSigma\bU_{\alg}) \cdot \{4\ C_{\alg} + 1\} \cdot \left\{\frac{\|\bSigma\|_{op}}{\|\bU_{\alg}\bSigma\bU_{\alg}\|_{op}} \vee \frac{\|\bsigma\|_{2}}{\|\bU_{\alg}\bsigma\|_{2}}\right\},
		\end{align*}
		corresponding to Equation~\eqref{eq:A.b.alg.M} in Section~\ref{sec:rls:alg.pert}; \label{ass:x.y.reg.alg.sam.stab}
		\item the sample and population algorithms are compatible with $r_{\alg} = \wh{r}_{\alg}$; \label{ass:x.y.reg.alg.sam.dof}
		\item with $\delta_{n}$ the complexity in Equation~\eqref{eq:x.y.delta.star.n}, some absolute constant $C\geq1$,
		\begin{align*}
			K &= C \vee \frac{L_{y} L_{\bx}\ \sigma_{y} \|\bSigma\|_{op}^\frac{1}{2}}{\|\bsigma\|_{2}},
		\end{align*}
		it holds $\delta_{n} \xrightarrow{n\to\infty} 0$ and $K M_{\alg} \delta_{n} < 1/2$. \label{ass:x.y.reg.alg.sam.n.large}
	\end{enumerate}
	The assumptions of Lemma~\ref{lem:x.y.reg.alg.sam.event} hold and, with $\wh{\eps}$ the size of the sample perturbation and $\eps_{n} = K\ \nu_{n}^{-1}\ \delta_{n}$, the event $\Omega_{\bx,y}(\nu_{n}) = \{\wh{\eps} \leq \eps_{n}\}$ has probability at least $1-2\nu_{n}$. On this event, we check that we can invoke Theorem~\ref{thm:alg.pert} with $\bA=\bSigma$, $\bb=\bsigma$ and $\wt{\bA}=\wh{\bSigma}$, $\wt{\bb}=\wh{\bsigma}$. First, the compatibility of the perturbation required by Equation~\eqref{eq:A.b.alg.pert} holds by Condition~\ref{ass:x.y.reg.alg.sam.dof}. This implies $(\wt{\bA},\wt{\bb})\in\Delta_{\alg}(\bA,\bb)$. Second, the population algorithm is stable with constants $C_{\alg}$, $D_{\alg}$ and $M_{\alg}$ by Condition~\ref{ass:x.y.reg.alg.sam.stab}. Third, the size of the perturbation is sufficiently small with
    \begin{align*}
        M_{\alg} \cdot \wh{\eps} \leq M_{\alg} \cdot \eps_{n} < \frac{M_{\alg} K}{M} = 1.
    \end{align*}
    Thus, on the event $\Omega_{\bx,y}(\nu_{n})$, it holds the claim.
\end{proof}

\begin{corollary}\label{cor:x.y.reg.alg.sam}
	Under the assumptions of Theorem~\ref{thm:x.y.reg.alg.sam}, assume that for some $r \leq r_{\alg}=\dim(\mB_{\alg})$ there exist compatible parameters $\bbeta_{\alg}^{(r)}$ computed by $\alg(\bSigma,\bsigma)$ and $\wh{\bbeta}_{\alg}^{(r)}$ computed by $\alg(\wh{\bSigma},\wh{\bsigma})$. On the same event of probability at least $1-2\nu_{n}$, the early-stopping sample error on the least-squares solution is
	\begin{align*}
		\frac{\|\wh{\bbeta}_{\alg}^{(r)} - \bbeta_{\alg}\|_{2}}{\|\bbeta_{\alg}\|_{2}} &\leq \sqrt{r_{\alg}-r} + \frac{5}{2}\ K M_{\alg}\ \bigg\{\sqrt{\frac{\rho_{\bx}}{n\nu_{n}^2}} + \frac{\rho_{\bx,n}}{n\nu_{n}}\bigg\},
	\end{align*} 
\end{corollary}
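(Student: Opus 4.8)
The plan is to follow the proof of Corollary~\ref{cor:x.y.reg.alg.pop} almost verbatim, with the population pair $(\bSigma_{s},\bsigma_{s})$ replaced by $(\bSigma,\bsigma)$ and the population pair $(\bSigma,\bsigma)$ replaced by the sample pair $(\wh{\bSigma},\wh{\bsigma})$, and with Theorem~\ref{thm:x.y.reg.alg.sam} (via Lemma~\ref{lem:x.y.reg.alg.sam.event}) taking the place of the deterministic perturbation input. First I would interpose the early-stopped population parameter $\bbeta_{\alg}^{(r)}=\bbeta_{\alg}^{(r)}(\bSigma,\bsigma)$ and split by the triangle inequality,
\begin{align*}
	\frac{\|\wh{\bbeta}_{\alg}^{(r)} - \bbeta_{\alg}\|_{2}}{\|\bbeta_{\alg}\|_{2}} \leq \frac{\|\wh{\bbeta}_{\alg}^{(r)} - \bbeta_{\alg}^{(r)}\|_{2}}{\|\bbeta_{\alg}\|_{2}} + \frac{\|\bbeta_{\alg}^{(r)} - \bbeta_{\alg}\|_{2}}{\|\bbeta_{\alg}\|_{2}}.
\end{align*}
The second summand is the deterministic early-stopping error: by Lemma~\ref{lem:reg.alg.def} and $\mB_{\alg}^{(r)}\subseteq\mB_{\alg}$ one has $\bbeta_{\alg}^{(r)}=\bU_{\alg}^{(r)}\bbeta_{\alg}$, so expanding $\bbeta_{\alg}$ in an orthonormal basis of $\mB_{\alg}$ whose first $r$ vectors span $\mB_{\alg}^{(r)}$ gives, exactly as in Corollary~\ref{cor:x.y.reg.alg.pop}, $\|\bbeta_{\alg}^{(r)}-\bbeta_{\alg}\|_{2}\le\|\bbeta_{\alg}\|_{2}\sqrt{r_{\alg}-r}$; the same representation gives $\|\bbeta_{\alg}^{(r)}\|_{2}\le\|\bbeta_{\alg}\|_{2}$ since $\bU_{\alg}^{(r)}$ is an orthogonal projection.

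For the first summand I would reuse the event $\Omega_{\bx,y}(\nu_{n})=\{\wh{\eps}\le K\delta_{n}/\nu_{n}\}$ of probability at least $1-2\nu_{n}$ from Lemma~\ref{lem:x.y.reg.alg.sam.event}, and apply Theorem~\ref{thm:alg.pert} with $\bA=\bSigma$, $\bb=\bsigma$, $\wt{\bA}=\wh{\bSigma}$, $\wt{\bb}=\wh{\bsigma}$, now at degrees-of-freedom $r$ rather than $r_{\alg}$. The compatibility $(\wh{\bSigma},\wh{\bsigma})\in\Delta_{\alg}(\bSigma,\bsigma)$ and the smallness condition $M_{\alg}\wh{\eps}<1$ were already checked in the proof of Theorem~\ref{thm:x.y.reg.alg.sam}, and — as in the proof of Corollary~\ref{cor:x.y.reg.alg.pop} — they persist at the smaller index $r$ once one knows that the constants controlling the bound do not increase under shrinking the retained subspace, i.e. $C_{\alg,r}(\bSigma,\bsigma)\le C_{\alg,r_{\alg}}(\bSigma,\bsigma)$ and the corresponding domination $M_{\alg,r}(\bSigma,\bsigma)\le M_{\alg}(\bSigma,\bsigma)$. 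Theorem~\ref{thm:alg.pert} then yields $\|\wh{\bbeta}_{\alg}^{(r)}-\bbeta_{\alg}^{(r)}\|_{2}/\|\bbeta_{\alg}^{(r)}\|_{2}\le\frac{5}{2} M_{\alg}\wh{\eps}$ on $\Omega_{\bx,y}(\nu_{n})$; combining with $\|\bbeta_{\alg}^{(r)}\|_{2}\le\|\bbeta_{\alg}\|_{2}$, with $\wh{\eps}\le K\delta_{n}/\nu_{n}$, and with the identity $\delta_{n}/\nu_{n}=\sqrt{\rho_{\bx}/(n\nu_{n}^{2})}+\rho_{\bx,n}/(n\nu_{n})$ read off from \eqref{eq:x.y.delta.star.n} gives the stated inequality.

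The routine parts — the triangle inequality, the projection-norm bound $\|\bU_{\alg}^{(r)}\bbeta_{\alg}\|_{2}\le\|\bbeta_{\alg}\|_{2}$, and recycling the already-established high-probability event — are immediate. The one genuinely delicate point, inherited directly from Corollary~\ref{cor:x.y.reg.alg.pop}, is justifying that moving from the full dimension $r_{\alg}$ to an intermediate $r$ does not worsen any of the quantities entering Theorem~\ref{thm:alg.pert}: one must inspect the chain of inequalities in the proof of Theorem~\ref{thm:alg.pert} and verify that each occurrence of $C_{\alg,r}$, of $\kappa_{2}(\bA_{\alg}^{(r)})$, and of the norm ratios $\|\bA\|_{op}/\|\bA_{\alg}^{(r)}\|_{op}$ and $\|\bb\|_{2}/\|\bb_{\alg}^{(r)}\|_{2}$ is monotone in the retained dimension, so that the bound stated for the top index dominates. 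This is where I expect the main work to lie; everything else is bookkeeping.
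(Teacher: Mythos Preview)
Your proposal is correct and follows essentially the same route as the paper: split by the triangle inequality through $\bbeta_{\alg}^{(r)}$, bound the early-stopping term $\|\bbeta_{\alg}^{(r)}-\bbeta_{\alg}\|_{2}\le\|\bbeta_{\alg}\|_{2}\sqrt{r_{\alg}-r}$ via the orthonormal-basis argument from Corollary~\ref{cor:x.y.reg.alg.pop}, and bound the sample term by re-applying Theorem~\ref{thm:alg.pert} at index $r$ on the event $\Omega_{\bx,y}(\nu_{n})$ from Lemma~\ref{lem:x.y.reg.alg.sam.event}. The paper dispatches the monotonicity issue you flag with the single sentence ``the quantities appearing in the assumptions are largest when $r=r_{\alg}$,'' which is exactly the point you identify as the main work; otherwise the two arguments are identical (your version is in fact slightly cleaner in explicitly inserting $\|\bbeta_{\alg}^{(r)}\|_{2}\le\|\bbeta_{\alg}\|_{2}$ before dividing).
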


\begin{proof}[Proof of Corollary~\ref{cor:x.y.reg.alg.sam}]
    We find 
    \begin{align*}
        \frac{\|\wh{\bbeta}_{\alg}^{(r)} - \bbeta_{\alg}\|_{2}}{\|\bbeta_{\alg}\|_{2}} &\leq \frac{\|\wh{\bbeta}_{\alg}^{(r)} - \bbeta_{\alg}^{(r)}\|_{2}}{\|\bbeta_{\alg}\|_{2}} + \frac{\|\bbeta_{\alg}^{(r)} - \bbeta_{\alg}\|_{2}}{\|\bbeta_{\alg}\|_{2}}.
    \end{align*}  
    With Definition~\ref{def:reg.alg} and the equivalent representations in Lemma~\ref{lem:reg.alg.def}, the same argument in the proof of Corollary~\ref{cor:x.y.reg.alg.pop} yields
    \begin{align*}
        \|\bbeta_{\alg}^{(r)} - \bbeta_{\alg}\|_{2} &\leq \|\bbeta_{\alg}\|_{2}\ \sqrt{r_{\alg}-r}.
    \end{align*}
    An inspection of the proof of Theorem~\ref{thm:x.y.reg.alg.sam} shows that its induced bounds hold for all sample parameters $\wh{\bbeta}_{\alg}^{(r)}$ and population parameters $\bbeta_{\alg}^{(r)}$ as long as $r\leq r_{\alg}$, since the quantities appearing in the assumptions are largest when $r=r_{\alg}$. That is to say, we can bound (on the same event)
    \begin{align*}
        \|\wh{\bbeta}_{\alg}^{(r)} - \bbeta_{\alg}^{(r)}\|_{2} &\leq \frac{5}{2}\ K M_{\alg}\ \bigg\{\sqrt{\frac{\rho_{\bx}}{n\nu_{n}^2}} + \frac{\rho_{\bx,n}}{n\nu_{n}}\bigg\}.
    \end{align*}
    Putting all the above displays together we get the claim.
\end{proof}

Theorem~\ref{thm:x.y.reg.alg.pop.sam} is a consequence of the following.
\begin{theorem} \label{thm:x.y.reg.alg.pop.sam.stop}
	Under the assumptions of Theorem~\ref{thm:x.y.reg.alg.pop} and Theorem~\ref{thm:x.y.reg.alg.sam}, assume that for some $r \leq r_{\alg}=\dim(\mB_{\alg})$ there exist compatible parameters there exist compatible parameters $\bbeta_{\alg}^{(r)}(\bx_{s},y)$ computed by $\alg(\bSigma_{s},\bsigma_{s})$ and $\wh{\bbeta}_{\alg}^{(r)}$ computed by $\alg(\wh{\bSigma},\wh{\bsigma})$. On the same event of probability at least $1-2\nu_{n}$, the early-stopping estimation error is
	\begin{align*}
		\frac{\|\wh{\bbeta}_{\alg}^{(r)} - \bbeta_{s}\|_{2}}{\|\bbeta_{s}\|_{2}} &\leq \sqrt{r_{\alg}-s} + \frac{5}{2}\ M_{\alg}^*\ \eps^* + \frac{35}{4}\ K M_{\alg}\ \bigg\{\sqrt{\frac{\rho_{\bx}}{n\nu_{n}^2}} + \frac{\rho_{\bx,n}}{n\nu_{n}}\bigg\}.
		%+ \frac{5}{2}\ \left\{1 + \frac{5}{2}\ M_{\alg}^*\ \eps^*\right\}\ K M_{\alg}\ \bigg\{\sqrt{\frac{\rho_{\bx}}{n\nu_{n}^2}} + \frac{\rho_{\bx,n}}{n\nu_{n}}\bigg\}.
	\end{align*} 
\end{theorem}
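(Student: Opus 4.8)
The plan is to interpose the full-data population early-stopped parameter $\bbeta_{\alg}^{(r)}(\bx,y)$ produced by $\alg(\bSigma,\bsigma)$ and then to assemble the two early-stopping corollaries already established. By the triangle inequality,
\begin{align*}
	\frac{\|\wh{\bbeta}_{\alg}^{(r)} - \bbeta_{s}\|_{2}}{\|\bbeta_{s}\|_{2}} \leq \frac{\|\wh{\bbeta}_{\alg}^{(r)} - \bbeta_{\alg}^{(r)}(\bx,y)\|_{2}}{\|\bbeta_{s}\|_{2}} + \frac{\|\bbeta_{\alg}^{(r)}(\bx,y) - \bbeta_{s}\|_{2}}{\|\bbeta_{s}\|_{2}},
\end{align*}
so it is enough to control a \emph{sample} early-stopping error (first summand) and a \emph{population} early-stopping error (second summand), both normalized by $\|\bbeta_{s}\|_{2}$.

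For the population summand I would invoke Corollary~\ref{cor:x.y.reg.alg.pop} directly. The hypotheses of Theorem~\ref{thm:x.y.reg.alg.pop} (adaptivity, parsimony, stability of $\alg(\bSigma_{s},\bsigma_{s})$, and $\eps^{*}<1/M_{\alg}^{*}$ with $M_{\alg}^{*}=M_{\alg}(\bSigma_{s},\bsigma_{s})$) are in force, and the compatibility of the parameters forces $\dim(\mB_{\alg}^{*})=\dim(\mB_{\alg})=r_{\alg}$; hence the corollary gives
\begin{align*}
	\frac{\|\bbeta_{\alg}^{(r)}(\bx,y) - \bbeta_{s}\|_{2}}{\|\bbeta_{s}\|_{2}} \leq \sqrt{r_{\alg}-s} + \frac{5}{2}\,M_{\alg}^{*}\,\eps^{*},
\end{align*}
which is exactly the first two terms of the claimed bound.

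For the sample summand I would work on the event $\Omega_{\bx,y}(\nu_{n})$ supplied by Lemma~\ref{lem:x.y.reg.alg.sam.event}, the same event on which Theorem~\ref{thm:x.y.reg.alg.sam} and Corollary~\ref{cor:x.y.reg.alg.sam} operate. Corollary~\ref{cor:x.y.reg.alg.sam}, applied with $(\bA,\bb)=(\bSigma,\bsigma)$ and the perturbation $(\wh{\bSigma},\wh{\bsigma})$, controls $\|\wh{\bbeta}_{\alg}^{(r)}-\bbeta_{\alg}^{(r)}(\bx,y)\|_{2}$ by $\tfrac{5}{2}\,K M_{\alg}\{\sqrt{\rho_{\bx}/(n\nu_{n}^{2})}+\rho_{\bx,n}/(n\nu_{n})\}$ times $\|\bbeta_{\alg}\|_{2}$ — the normalization there is the unstopped population parameter $\bbeta_{\alg}$, and $\bbeta_{\alg}^{(r)}$ is an orthogonal projection of $\bbeta_{\alg}$, so nothing is lost. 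The one genuinely new ingredient is the comparison $\|\bbeta_{\alg}\|_{2}\leq\tfrac{7}{2}\|\bbeta_{s}\|_{2}$: Theorem~\ref{thm:x.y.reg.alg.pop} gives $\|\bbeta_{\alg}-\bbeta_{s}\|_{2}\leq\tfrac{5}{2}M_{\alg}^{*}\eps^{*}\|\bbeta_{s}\|_{2}$, and since $M_{\alg}^{*}\eps^{*}<1$ this is $<\tfrac{5}{2}\|\bbeta_{s}\|_{2}$, whence $\|\bbeta_{\alg}\|_{2}\leq\|\bbeta_{s}\|_{2}+\tfrac{5}{2}\|\bbeta_{s}\|_{2}=\tfrac{7}{2}\|\bbeta_{s}\|_{2}$. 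Substituting turns the factor $\tfrac{5}{2}$ into $\tfrac{5}{2}\cdot\tfrac{7}{2}=\tfrac{35}{4}$ and produces the third term; adding the two summands gives the statement.

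The main, and essentially only non-mechanical, obstacle is this normalization mismatch: the sample perturbation corollary is naturally phrased relative to $\|\bbeta_{\alg}\|_{2}$ whereas the theorem is phrased relative to the target $\|\bbeta_{s}\|_{2}$, so one must propagate $\|\bbeta_{\alg}\|_{2}\leq\tfrac{7}{2}\|\bbeta_{s}\|_{2}$ through the bound — and this estimate relies crucially on the smallness condition $\eps^{*}<1/M_{\alg}^{*}$ from the population theorem, absent which the population parameter could be much larger than the target. Everything else is bookkeeping: combining Corollaries~\ref{cor:x.y.reg.alg.pop} and~\ref{cor:x.y.reg.alg.sam} on the common event $\Omega_{\bx,y}(\nu_{n})$, with no probabilistic content beyond what Lemma~\ref{lem:x.y.reg.alg.sam.event} already delivers.
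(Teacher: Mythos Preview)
Your proposal is correct and follows essentially the same argument as the paper. The paper opts for a three-term triangle inequality (interposing both $\bbeta_{\alg}^{(r)}(\bx,y)$ and $\bbeta_{\alg}^{(r)}(\bx_{s},y)$) rather than directly citing Corollary~\ref{cor:x.y.reg.alg.pop}, but the substance is identical: the only non-mechanical step in either route is exactly the normalization comparison you isolate, namely converting the $\|\bbeta_{\alg}\|_{2}$-normalization of the sample bound into a $\|\bbeta_{s}\|_{2}$-normalization via $1+\tfrac{5}{2}M_{\alg}^{*}\eps^{*}\leq\tfrac{7}{2}$, which yields the factor $\tfrac{5}{2}\cdot\tfrac{7}{2}=\tfrac{35}{4}$.
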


\begin{proof}[Proof of Theorem~\ref{thm:x.y.reg.alg.pop.sam}]
    Recall that $\bbeta_{\alg}^*=\bbeta_{s}$. We find
    \begin{align*}
        \frac{\|\wh{\bbeta}_{\alg}^{(r)} - \bbeta_{\alg}^*\|_{2}}{\|\bbeta_{\alg}^*\|_{2}} &\leq \frac{\|\wh{\bbeta}_{\alg}^{(r)} - \bbeta_{\alg}^{(r)}(\bx,y)\|_{2}}{\|\bbeta_{\alg}^*\|_{2}} + \frac{\|\bbeta_{\alg}^{(r)}(\bx,y) - \bbeta_{\alg}^{(r)}(\bx_{s},y)\|_{2}}{\|\bbeta_{\alg}^*\|_{2}} + \frac{\|\bbeta_{\alg}^{(r)}(\bx_{s},y) - \bbeta_{\alg}^*\|_{2}}{\|\bbeta_{\alg}^*\|_{2}}.
    \end{align*}  
    With Definition~\ref{def:reg.alg} and the equivalent representations in Lemma~\ref{lem:reg.alg.def}, the same argument in the proof of Corollary~\ref{cor:x.y.reg.alg.pop} yields both
    \begin{align*}
        \frac{\|\bbeta_{\alg}^{(r)}(\bx_{s},y) - \bbeta_{\alg}^*\|_{2}}{\|\bbeta_{\alg}^*\|_{2}} &\leq \sqrt{r_{\alg}-s}, \\
        \frac{\|\bbeta_{\alg}^{(r)}(\bx,y) - \bbeta_{\alg}^{(r)}(\bx_{s},y)\|_{2}}{\|\bbeta_{\alg}^*\|_{2}} &\leq \frac{5}{2}\  M_{\alg}^*\ \eps^*.
    \end{align*}
    An inspection of the proof of Theorem~\ref{thm:x.y.reg.alg.sam} shows that its induced bounds hold for all sample parameters $\wh{\bbeta}_{\alg}^{(r)}$ and population parameters $\bbeta_{\alg}^{(r)}$ as long as $r\leq r_{\alg}$, since the quantities appearing in the assumptions are largest when $r=r_{\alg}$. That is to say, we can bound (on the same event)
    \begin{align*}
        \|\wh{\bbeta}_{\alg}^{(r)} - \bbeta_{\alg}^{(r)}(\bx,y)\|_{2} &\leq \|\bbeta_{\alg}^{(r)}(\bx,y)\|_{2}\ \frac{5}{2}\ K M_{\alg}\ \bigg\{\sqrt{\frac{\rho_{\bx}}{n\nu_{n}^2}} + \frac{\rho_{\bx,n}}{n\nu_{n}}\bigg\} \\
        &\leq \left\{\|\bbeta_{\alg}^{(r)}(\bx_{s},y)\|_{2} + \|\bbeta_{\alg}^{(r)}(\bx,y) - \bbeta_{\alg}^{(r)}(\bx_{s},y)\|_{2}\right\}\ \frac{5}{2}\ K M_{\alg}\ \bigg\{\sqrt{\frac{\rho_{\bx}}{n\nu_{n}^2}} + \frac{\rho_{\bx,n}}{n\nu_{n}}\bigg\} \\
        &\leq \|\bbeta_{\alg}^*\|_{2} \left\{1 + \frac{5}{2}\ M_{\alg}^*\ \eps^*\right\}\ \frac{5}{2}\ K M_{\alg}\ \bigg\{\sqrt{\frac{\rho_{\bx}}{n\nu_{n}^2}} + \frac{\rho_{\bx,n}}{n\nu_{n}}\bigg\}.
    \end{align*}
    Putting together all the above displays we get the claim since $1+(5/2)M_{\alg}^*\ \eps^*\leq 7/2$.
\end{proof}

\clearpage
\bibliography{full.bib}       % Bibliography file (usually '*.bib')

\end{document}